\newcounter{numberofremark}
\newcommand\nothing[1]{}
\newcommand{\dcl}{\DeclareMathOperator}
\dcl\cdet{cdet} \dcl\Sp{Specm} \dcl\depth{depth} \dcl\im{Im} \dcl\Span{span} \dcl\Ker{Ker} \dcl\Specm{Specm}
\dcl\Supp{Supp} \dcl\codim{codim} \dcl\Y{Y} \dcl\gl{\mathfrak{gl}}    \dcl\U{U} \dcl\T{T}
\dcl\qdet{qdet} \dcl\sgn{sgn} \dcl\gr{gr} \dcl\diag{diag}
\dcl\g{\mathfrak{g}} \dcl\C{\mathbb C} \dcl\dd{{\mathrm d}}
\newcommand\sn{{\mathsf n}}
\newcommand\sm{{\mathsf m}}
\newcommand\Ga{{\Gamma}}
\def\cM{\mathcal M}
\newlength\yStones
\newlength\xStones
\newlength\xxStones
\def\Stones{\pst@object{Stones}}
\def\Stones@i#1{%
  \pst@killglue%
  \begingroup%
  \use@par%
  \setlength\xxStones{\xStones}%
  \expandafter\Stones@ii#1,,\@nil
  \endgroup
  \global\addtolength\xStones{0.6cm}%
  \global\addtolength\yStones{-7.5mm}}%
\def\Stones@ii#1,#2,#3\@nil{%
  \rput(\xxStones,\yStones){%
    \psframebox[framesep=0]{%
      \parbox[c][6mm][c]{11mm}{\makebox[11mm]{$#1$}}}}%
  \addtolength\xxStones{1.2cm}%
  \ifx\relax#2\relax\else\Stones@ii#2,#3\@nil\fi}
\def\Stone#1{\fbox{\makebox[11mm]{\strut#1}}\kern2pt}
\newtheorem{theorem}{Theorem}[section]
\newtheorem{lemma}[theorem]{Lemma}
\newtheorem{corollary}[theorem]{Corollary}
\newtheorem{proposition}[theorem]{Proposition}
\newtheorem{remark}[theorem]{Remark}
\newtheorem{definition}[theorem]{Definition}
\begin{document}

\title{Singular Gelfand-Tsetlin modules of $\displaystyle \gl(n)$}

\author{Vyacheslav Futorny}

\address{Instituto de Matem\'atica e Estat\'istica, Universidade de S\~ao
Paulo,  S\~ao Paulo SP, Brasil} \email{futorny@ime.usp.br,}
\author{Dimitar Grantcharov}
\address{\noindent
University of Texas at Arlington,  Arlington, TX 76019, USA} \email{grandim@uta.edu}
\author{Luis Enrique Ramirez}
\address{Instituto de Matem\'atica e Estat\'istica, Universidade de S\~ao
Paulo,  S\~ao Paulo SP, Brasil} \email{luiser@ime.usp.br,}

\begin{abstract}

The classical Gelfand-Tsetlin formulas provide a basis  in terms of tableaux and an explicit action of the generators of $\mathfrak{gl} (n)$  for every irreducible finite-dimensional  $\mathfrak{gl} (n)$-module. These formulas can be used to define a $\mathfrak{gl} (n)$-module structure on some infinite-dimensional modules - the so-called generic Gelfand-Tsetlin modules. The generic Gelfand-Tsetlin  modules are convenient to work with since  for every generic tableau there exists a unique irreducible generic Gelfand-Tsetlin module containing this tableau as a basis element. In this paper we initiate the systematic study of a large class of non-generic Gelfand-Tsetlin modules - the class of  $1$-singular Gelfand-Tsetlin modules. An explicit tableaux realization and the action of $\mathfrak{gl} (n)$ on these modules is  provided  using a new construction which we call derivative tableaux. Our construction of $1$-singular modules provides a large family of new irreducible Gelfand-Tsetlin modules of $\gl (n)$, and is a part of the classification of all such irreducible modules for $n=3$.
\end{abstract}

\subjclass{Primary 17B67}
\keywords{Gelfand-Tsetlin modules,  Gelfand-Tsetlin basis, tableaux realization}
\maketitle
\section{Introduction}

A recent major breakthrough in the representation theory was the  classification of the irreducible weight modules with finite weight multiplicities of all finite-dimensional reductive complex Lie algebras. The classification result was obtained in two steps: first in  \cite{Fe}, using parabolic induction functors, the problem was reduced to simple Lie algebras of type $A$ and $C$; and then in \cite{M}, the classification was completed and an explicit realization of the irreducibles was obtained. Recall that $M$ is weight module of a Lie algebra ${\mathfrak g}$ with fixed Cartan subalgebra  $\mathfrak h$ if $M$ is $\mathfrak h$-diagonalizable, and that the dimension of the weight space $M_\lambda=\{v\in M\mid hv=\lambda(h)v\
(\forall h\in {\mathfrak h})\}$ is called the weight multiplicity of $\lambda \in {\mathfrak h}^*$.

On the other hand, the problem of classifying all irreducible weight modules (possibly with infinite weight multiplicities) is still largely open.  A natural large class of such modules  consists of the so-called Gelfand-Tsetlin modules. The Gelfand-Tsetlin modules are defined by  generalizing a classical construction of Gelfand and Tsetlin that provides a convenient basis for every simple finite dimensional representation of a simple classical Lie algebra. The theory of general Gelfand-Tsetlin modules, especially for Lie algebras of type $A$,  has attracted considerable attention in the last 30 years and 
have been studied in \cite{DFO2}, \cite{DFO1}, \cite{GT}, \cite{Maz1}, \cite{Maz2}, \cite{m:gtsb}, \cite{Zh}, among others.  

In this paper we consider Gelfand-Tsetlin modules for Lie algebras of type $A$ and  for simplicity work with $\gl(n)$ instead of $\mathfrak{sl}(n)$.  The Gelfand-Tsetlin modules of $\gl(n)$  by definition are modules that admit a basis of common eigenvectors of a fixed maximal commutative subalgebra $\Gamma$ of the universal enveloping algebra $U(\gl (n))$ of $\gl(n)$. The algebra $\Gamma$  is  called the Gelfand-Tsetlin subalgebra of  $U(\gl (n))$ and has numerous applications that extend beyond the theory of Gelfand-Tsetlin modules.  Gelfand-Tsetlin subalgebras were considered in \cite{Vi} in  connection with subalgebras of maximal Gelfand-Kirillov dimension in the universal enveloping
algebra of a simple Lie algebra. Furthermore, these subalgebras are related to: general hypergeometric functions on the complex Lie group $GL(n)$, \cite{Gr1},\cite{Gr2}; solutions of the
Euler equation, \cite{Vi}; and problems in classical mechanics in general,  \cite{KW-1}, \cite{KW-2}.

It is well known that the Gelfand-Tsetlin subalgebra $\Gamma$ has a simple spectrum on any  irreducible finite-dimensional module, that is, the
characters of $\Gamma$ separate the basis elements of such module.  However, this property does not longer hold  for infinite-dimensional Gelfand-Tsetlin modules, in which case  Gelfand-Tsetlin characters may occur with multiplicities.  These Gelfand-Tsetlin multiplicities are always finite by \cite{Ovs}, and bounded by
\cite{FO2}. The so-called generic Gelfand-Tsetlin modules (for the explicit definition see Section 3) have the convenient property that all their  Gelfand-Tsetlin multiplicities are $1$. Gelfand-Tsetlin modules with multiplicities $1$  have been studied in several papers. In \cite{GG}, using the classical Gelfand-Tsetlin formulas, a Gelfand-Tsetlin basis was constructed for some of these modules. Later,  in  \cite{LP1, LP2}   the construction of  \cite{GG} was extended to all such modules for $n=3$. The existence of higher Gelfand-Tsetlin multiplicities is a major obstacle to study and explicitly construct tableaux-type basis for  such modules in general.
So far, the only known examples of  Gelfand-Tsetlin modules with higher multiplicities and with basis of tableaux, are examples of Verma modules considered in \cite{FJMM}.

In this paper we initiate the  systematic study  of non-generic Gelfand-Tsetlin modules, i.e. of singular Gelfand-Tsetlin modules. Singular modules are those for which the denominators in the Gelfand-Tsetlin formulas may vanish, and  we focus on the case of $1$-singular modules, that is, the case when only simple poles are allowed. In order to deal with the singularities of the coefficients we introduce a new type of tableaux - the derivative Gelfand-Tsetlin tableaux, or simply, the derivative tableaux.  We expect that the derivative tableaux construction can be extended to more general singular Gelfand-Tsetlin modules using differential operators of higher order.

The main results of present paper can be separated into two 
components:

(1)  With the aid of the derivative tableaux  we construct  a universal Gelfand-Tsetlin module for any $1$-singular Gelfand-Tsetlin character. This is achieved by providing  an explicit tableaux-type basis and defining the action of the generators of $\gl(n)$ in the spirit of the original work of Gelfand and Tsetlin. Our construction extends and generalize the previous  works on finite-dimensional and on generic Gelfand-Tsetlin modules.  The universal $1$-singular  Gelfand-Tsetlin modules have finite length and most of their irreducible subquotients are  examples of irreducible singular Gelfand-Tsetlin $\gl(n)$-modules that are previously not known. We also obtain a condition for the universal module to be irreducible.

(2) We  show that for any $1$-singular Gelfand-Tsetlin character there  exist at most two non-isomorphic irreducible Gelfand-Tsetlin modules with that character, and that the corresponding Gelfand-Tsetlin multiplicity of this character in any of the two modules is at most $2$. We prove that, except for one case, every irreducible $1$-singular Gelfand-Tsetlin module is a subquotient of a universal derivative tableaux Gelfand-Tsetlin module. This single case occurs when the universal module has two isomorphic irreducible subquotients. We conjecture that even in that single case the realization of the irreducibles as subquotients of universal derivative tableaux modules remain valid. This conjecture is true in the case of $\gl (3)$  as we show in the forthcoming paper \cite{FGR}. In particular, with the aid of derivative tableaux, we obtain a classification and explicit tableaux realization of all irreducible Gelfand-Tsetlin $\gl (3)$-modules.

The paper is organized as follows. In Section 3 we recall the classical construction of Gelfand and Tsetlin and collect important properties for the generic Gelfand-Tsetlin modules. In this section we also rewrite the Gelfand-Tsetlin formulas in terms of permutations. In Section 4 we introduce our new construction - the derivative tableaux and prove that the space $V(T(\bar{v}))$ spanned by the (usual) tableaux $T(\bar{v} + z)$ and the derivative tableaux ${\mathcal D} T (\bar{v} + z)$ associated to a $1$-singular vector $\bar{v}$  has a $\gl (n)$-module structure. The proof that this $\gl (n)$-module is a 1-singular Gelfand-Tsetlin module is included in Section 5, where explicit formulas for the action of the Gelfand-Tsetlin subalgebra on $V(T(\bar{v}))$ are obtained. In Section 6 we obtain a condition for the irreducibility of $V(T(\bar{v}))$. In Section 7 we show that there are at most two Gelfand-Tsetlin modules associated with a fixed 1-singular Gelfand-Tsetlin character and show that, except for one case, every irreducible $1$-singular Gelfand-Tsetlin module is a subquotient of $V(T(\bar{v}))$ for some $\bar{v}$. In the last section, the appendix, we prove some technical results that are needed in Section 4. 

\noindent{\bf Acknowledgements.} D.G. gratefully acknowledges the
hospitality and excellent working conditions at the S\~ao Paulo
University and  IH\'ES, where part of this work was completed. V.F. is
supported in part by  CNPq grant (301320/2013-6) and by 
Fapesp grant ( 2014/09310-5 ). D.G is supported in part by 
Fapesp grant \\(2011/21621-8) and by NSA grant H98230-13-1-0245. L.E.R. is supported by 
Fapesp grant (2012/23450-9).

\section{Conventions and notation} The ground field will be ${\mathbb C}$.  For $a \in {\mathbb Z}$, we write $\mathbb Z_{\geq a}$ for the set of all integers $m$ such that $m \geq a$.  We fix an integer $n\geq 2$. By $\gl(n)$ we denote the general linear Lie
algebra consisting of all $n\times n$ complex matrices, and by $\{E_{i,j}\mid 1\leq i,j \leq n\}$  - the
standard basis of $\gl(n)$ of elementary matrices. We fix the standard triangular decomposition and  the corresponding basis of simple roots of $\gl(n)$.  The weights of $\gl(n)$ will be written as $n$-tuples $(\lambda_1,...,\lambda_n).$ 

For a Lie algebra ${\mathfrak a}$ by $U(\mathfrak a)$ we denote the universal enveloping algebra of ${\mathfrak a}$. Throughout the paper $U = U(\gl (n))$.  For a commutative ring $R$, by ${\rm Specm}\, R$ we denote the set of maximal ideals of $R$.

We will write the vectors in $\mathbb{C}^{\frac{n(n+1)}{2}}$ in the following form:
$$
z=(z_{n1},...,z_{nn}|z_{n-1,1},...,z_{n-1,n-1}| \cdots|z_{21}, z_{22}|z_{11}).
$$
For $1\leq j \leq i \leq n$, $\delta^{ij} \in {\mathbb Z}^{\frac{n(n+1)}{2}}$ is defined by  $(\delta^{ij})_{ij}=1$ and all other $(\delta^{ij})_{k\ell}$ are zero. 
 
For $i>0$ by $S_i$ we denote the $i$th symmetric group. By $(k, \ell)$ we denote the transposition of $S_n$ switching $k$ and $\ell$. Throughout the paper we set  $G:=S_n\times\cdots \times S_1$ and  $Q_n:=1!2!\ldots (n-1)!$ Every element $\sigma$ in $G$ will be written as an $n$-tuple $(\sigma[n],...,\sigma[1])$ for permutations $\sigma[i] \in S_i$.

\section{Gelfand-Tsetlin modules}

\subsection{Definitions}
Recall that $U=U(\gl (n) )$.  Let  for $m\leqslant n$, $\mathfrak{gl}_{m}$ be the Lie subalgebra
of $\gl (n)$ spanned by $\{ E_{ij}\,|\, i,j=1,\ldots,m \}$. Then we have the following chain
$$\gl_1\subset \gl_2\subset \ldots \subset \gl_n,$$
which induces  the chain $U_1\subset$ $U_2\subset$ $\ldots$ $\subset
U_n$ of the universal enveloping algebras  $U_{m}=U(\gl_{m})$, $1\leq m\leq n$. Let
$Z_{m}$ be the center of $U_{m}$. Then $Z_m$ is the polynomial
algebra in the $m$ variables $\{ c_{mk}\,|\,k=1,\ldots,m \}$,
\begin{equation}\label{equ_3}
c_{mk } \ = \ \displaystyle {\sum_{(i_1,\ldots,i_k)\in \{
1,\ldots,m \}^k}} E_{i_1 i_2}E_{i_2 i_3}\ldots E_{i_k i_1}.
\end{equation}

Following \cite{DFO3}, we call the subalgebra of $U$ generated by $\{
Z_m\,|\,m=1,\ldots, n \}$ the \emph{(standard) Gelfand-Tsetlin
subalgebra} of $U$ and will be denoted by  ${\Ga}$. In fact,  ${\Ga}$ is the polynomial algebra in the $\displaystyle \frac{n(n+1)}{2}$ variables $\{
c_{ij}\,|\, 1\leqslant j\leqslant i\leqslant n \}$ (\cite{Zh}).
 Let $\Lambda$ be the polynomial
algebra in the variables $\{\lambda_{ij}\,|$ $1\leqslant j\leqslant
i\leqslant n \}$.

Let $\imath:{\Ga}{\longrightarrow}$ $\Lambda$ be the embedding
defined by $\imath(c_{mk}) \  = \  \gamma_{mk} (\lambda)$, where
\begin{equation} \label{def-gamma}
\gamma_{mk} (\lambda): = \ \sum_{i=1}^m
(\lambda_{mi}+m-1)^k \prod_{j\ne i} \left( 1 -
\frac{1}{\lambda_{mi}-\lambda_{mj}} \right).
\end{equation} The image of $\imath$
coincides with the subalgebra of $G$-invariant polynomials  in
$\Lambda$ (\cite{Zh}) which we identify with $\Ga$. Note that $\Lambda$ is the integral closure of $\Ga$ in $L$. Then we have a surjective map  $\pi: \Sp \Lambda \rightarrow 
\Sp \Ga$.
If $\pi(\ell)=\sm$ for some
$\ell\in \Specm \Lambda$, then
we write $\ell=\ell_{\sm}$ and say that $\ell_{\sm}$ is {\it lying over} $\sm$.

 Denote by $K$ be
the field of fractions of ${\Gamma}$.
Let $L$ be the fraction field
of $\Lambda$. Let $\mathcal M $ be the free abelian group generated by
$\delta^{ij}$, $1\leqslant j\leqslant i\leqslant n-1$. Clearly, $\Sp \Lambda \simeq {\mathbb C}^{\frac{n(n+1)}{2}}$ and $\mathcal M \simeq \mathbb
Z^{\frac{n(n-1)}{2}}$.
Then $G$ acts naturally on $\Sp \Lambda$ by conjugation and  $\mathcal M$ acts on $\Sp \Lambda$ by the corresponding shift: $\delta^{ij}\cdot\ell=$ $\ell+\delta^{ij}$,
$\delta^{ij}\in$ $\mathcal M$. We have $L^G=K$, $\Lambda^{G}=\Ga$ and $G=G(L/K)$ is the
Galois group of the field extension $ K\subset L$.

\begin{definition}
\label{definition-of-GZ-modules} A finitely generated $U$-module
$M$ is called a \emph{Gelfand-Tsetlin module (with respect to
$\Ga$)} if $M$ splits into  a direct sum
of $\Ga$-modules:

\begin{equation*}
M=\bigoplus_{\sm\in\Sp\Ga}M(\sm),
\end{equation*}
where $$M(\sm)=\{v\in M| \sm^{k}v=0 \text{ for some }k\geq 0\}.$$
\end{definition}

Identifying $\sm$ with the homomorphism $\chi:\Gamma \rightarrow {\mathbb C}$ with $\Ker \chi=\sm$, we will call $\sm$ 
a \emph{Gelfand-Tsetlin character} of $M$ if $ M(\sm) \neq 0$, and $\dim M(\sm)$ - the \emph{Gelfand-Tsetlin multiplicity of $\sm$}. The  \emph{Gelfand-Tsetlin support} of a Gelfand-Tsetlin module $M$ is the set of all Gelfand-Tsetlin characters of $M$. We will often call  Gelfand-Tsetlin character, Gelfand-Tsetlin multiplicity, and Gelfand-Tsetlin support simply  character, multiplicity, and support, respectively.

Note that any irreducible Gelfand-Tsetlin  module over $\gl(n)$ is a weight module with respect to the
 standard Cartan subalgebra $\mathfrak h$ spanned by $E_{ii}$, $i=1,\ldots, n$. The converse is  not true in general (except for $n=2$), i.e. an irreducible weight module $M$ need not to be Gelfand-Tsetlin. However, it is the case when
the weight multiplicities of $M$ are finite. In particular, every highest weight module or, more generally, every module from the category $\mathcal O$ is a Gelfand-Tsetlin module.

\subsection{Preliminaries}
In this subsection we collect some general results on Gelfand-Tsetlin modules. The first two theorems follow from the fact that $U$ is a Galois order with respect to $\Ga$, \cite{FO1}. 
Another important fact used in the proofs is  that $U$ is free as a left and as a right $\Ga$-module, \cite{Ovs}. 

The inequality in the next theorem follows by Lemma 4.1(c) and Theorem 4.12(c) in \cite{FO2}. Note that, as proved in \cite[\S 4]{FO2}, the set on the right hand side of the inequality does not depend on the choice of $\ell_{\sm}$.
\begin{theorem}\label{thm-bound-dim} 
Let  $M\neq 0$ be a Gelfand-Tsetlin $\gl(n)$-module,  and let $\sm\in\Specm \Ga$ be in the support of $M$ such that $M$ is generated by 
  $x\in M(\sm)$ and $\sm x = 0$.  Then, for each $\sn$ in the support of $M$
    \begin{equation*}
    \dim M(\sn) \leq |\{\phi\in \cM\mid \pi(\phi\ell_{\sm})=\sn\}|.
    \end{equation*}
 
\end{theorem}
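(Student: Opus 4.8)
The plan is to deduce the theorem from the general structure theory of Galois orders, using the fact that $U$ is a Galois order over $\Ga$ (\cite{FO1}) together with the key finiteness input from \cite{FO2}. Since the statement explicitly says the inequality follows from Lemma 4.1(c) and Theorem 4.12(c) of \cite{FO2}, the proof should essentially be a matter of quoting those results and checking that their hypotheses are met in our situation; the real content is understanding why the right-hand side is the correct combinatorial bound.

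First I would recall the setup: $\Lambda$ is the integral closure of $\Ga$ in $L$, with $G$ the Galois group of $L/K$, and $\mathcal M$ the shift group acting on $\Sp \Lambda$. Because $U$ is a Galois order with respect to $\Ga$, it embeds into the skew group algebra $(L * \mathcal M)^G$ (the invariants of the Mackey algebra), and any Gelfand-Tsetlin module decomposes along $\Sp \Ga$ as in Definition~\ref{definition-of-GZ-modules}. The generator $x \in M(\sm)$ with $\sm x = 0$ is an eigenvector for $\Ga$ with character corresponding to $\sm$. One then fixes a point $\ell_\sm \in \Sp \Lambda$ lying over $\sm$; by \cite[\S 4]{FO2} the set $\{\phi \in \cM \mid \pi(\phi \ell_\sm) = \sn\}$ is independent of this choice, so the bound is well defined.

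Next, the mechanism behind the inequality: acting on $x$ by elements of $U$ produces, after passing to the Mackey algebra, $L$-linear combinations of shift operators $\phi \in \mathcal M$; the $\sn$-component $M(\sn)$ is spanned by the images under those shifts $\phi$ that move $\ell_\sm$ into the fiber $\pi^{-1}(\sn)$. Each such shift contributes at most a one-dimensional space to $M(\sn)$ once one quotients by the stabilizer action, and the count of relevant cosets is exactly $|\{\phi \in \cM \mid \pi(\phi \ell_\sm) = \sn\}|$. Making this precise is precisely the role of Lemma 4.1(c) (which controls the generators of $M$ as images of $x$ under shift operators) and Theorem 4.12(c) of \cite{FO2} (which gives the dimension bound for a cyclic Gelfand-Tsetlin module generated by an element killed by its character). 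So the body of the proof is: verify that $M$ is a finitely generated, in fact cyclic, Gelfand-Tsetlin module generated by $x$ with $\sm x = 0$; invoke Lemma 4.1(c) of \cite{FO2} to express $M(\sn)$ in terms of shift operators applied to $x$; then apply Theorem 4.12(c) of \cite{FO2} to obtain $\dim M(\sn) \leq |\{\phi\in \cM\mid \pi(\phi\ell_{\sm})=\sn\}|$.

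The main obstacle, such as it is, is not a new argument but bookkeeping: one must confirm that the hypotheses of the cited results from \cite{FO2} are literally the ones we have here — in particular that ``generated by $x \in M(\sm)$ with $\sm x = 0$'' matches their notion of a cyclic module with the generator annihilated by the maximal ideal, and that the relevant freeness hypotheses hold, which they do since $U$ is free as a left and right $\Ga$-module by \cite{Ovs}. Once that matching is done the inequality is immediate. I would therefore keep the proof short, essentially a paragraph citing \cite{FO1} for the Galois order property, \cite{Ovs} for freeness, and \cite{FO2} for the two quantitative statements, and noting the independence of the right-hand side from the choice of $\ell_\sm$.
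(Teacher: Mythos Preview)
Your proposal is correct and matches the paper's approach exactly: the paper does not give an independent proof but simply cites Lemma~4.1(c) and Theorem~4.12(c) of \cite{FO2} for the inequality, together with \cite{FO1} for the Galois order structure, \cite{Ovs} for freeness, and \cite[\S 4]{FO2} for the independence of the right-hand side from the choice of $\ell_{\sm}$. Your additional paragraph sketching the mechanism (shift operators moving $\ell_{\sm}$ into $\pi^{-1}(\sn)$) is helpful exposition but goes beyond what the paper records.
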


Recall $Q_n=1!2!\ldots (n-1)!$.

\begin{theorem}[\cite{FO2}, Corollary 5.3]
\label{theorem-finiteness-for-gl-n} Let $\sm\in\Sp\Ga$. Then
\begin{itemize}
\item[(i)]\label{enumerate-the-dimension-gl-simples} If $M$ is a $U$-module generated by some $x\in M(\sm)$ (in particular for an irreducible module), then $$\dim
M(\sm)\leq Q_n.$$ 
\item[(ii)]\label{enumerate-the-number-gl-simples} The
number of  isomorphism classes of irreducible $U$-modules $N$ such that $N(\sm)\ne 0$ is always nonzero and does not exceed  $Q_n$.
\end{itemize}
\end{theorem}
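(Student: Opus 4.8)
The plan is to read off part~(i) from Theorem~\ref{thm-bound-dim} by bounding the size of the relevant fiber by $Q_n$, and then to deduce part~(ii) from the cyclic case of part~(i) by a short argument with the module $U/U\sm$. Both statements thus rest on one elementary count of the $\cM$-shifts that are compatible with $\sm$, together with the Galois-order input already recorded in Theorem~\ref{thm-bound-dim}.

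Fix $\ell_{\sm}\in\Sp\Lambda$ lying over $\sm$ and consider the set $\Phi_{\sm}:=\{\phi\in\cM\mid\pi(\phi\ell_{\sm})=\sm\}$ appearing in Theorem~\ref{thm-bound-dim}. The first step is the observation that $|\Phi_{\sm}|\le Q_n$: if $\pi(\phi\ell_{\sm})=\pi(\ell_{\sm})$ then $\phi\ell_{\sm}=\sigma\ell_{\sm}$ for some $\sigma=(\sigma[n],\dots,\sigma[1])\in G$, and since an element of $\cM$ leaves the $n$-th row of $\ell_{\sm}$ unchanged and only translates, by integers, the entries of the rows $1,\dots,n-1$, comparing rows forces $\sigma[n]$ to stabilize the $n$-th row of $\ell_{\sm}$ and, for each $i\le n-1$, forces the $i$-th block of $\phi$ to be $\sigma[i]$ applied to the $i$-th row of $\ell_{\sm}$ minus that row; hence $\phi$ is completely determined by the tuple $(\sigma[1],\dots,\sigma[n-1])$, and there are at most $|S_{n-1}\times\cdots\times S_1|=1!\,2!\cdots(n-1)!=Q_n$ of those. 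By the remark preceding Theorem~\ref{thm-bound-dim} this count does not depend on $\ell_{\sm}$. Now, if $M=Ux$ with $x\in M(\sm)$ and $\sm x=0$, Theorem~\ref{thm-bound-dim} with $\sn=\sm$ gives $\dim M(\sm)\le|\Phi_{\sm}|\le Q_n$, which already proves~(i) for every irreducible $N$: the vectors of $N(\sm)$ killed by $\sm$ form a nonzero subspace (the common kernel of the pairwise commuting locally nilpotent operators through which $\sm\subseteq\Ga$ acts on the finite-dimensional space $N(\sm)$), and any nonzero such vector generates $N$. For a general cyclic module with $\sm^{k}x=0$ and $k>1$, filtering $M$ by the submodules generated by $\{v\in M\mid\sm^{j}v=0\}$ only yields a bound of the form $(\mathrm{const})\cdot Q_n$; to recover the sharp bound one should instead pass, as in \cite{FO2}, to the finite-dimensional ``$\sm$-fiber'' of the Galois order $U$, check that $M(\sm)$ is a cyclic module over it, and identify its dimension with $|\Phi_{\sm}|$. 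I expect this last point to be the only genuinely delicate part of the argument; everything else is bookkeeping.

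For part~(ii) I would first show the number in question is nonzero. The module $M:=U/U\sm$ is nonzero because $U$ is free over $\Ga$, it is a Gelfand-Tsetlin module because it is cyclic with $\Ga$-torsion generator, and $\bar 1\in M(\sm)$ with $\sm\bar 1=0$. Taking, by Zorn's lemma, a submodule $W\subsetneq M$ maximal for the property $\bar 1\notin W$, the quotient $N:=M/W$ is irreducible (every nonzero submodule of $N$ pulls back to a submodule of $M$ strictly containing $W$, hence contains $\bar 1$, hence is all of $N$) and the image of $\bar 1$ is a nonzero vector of $N(\sm)$, so $N$ is as required. For the upper bound, let $N_1,\dots,N_r$ be pairwise non-isomorphic irreducible Gelfand-Tsetlin modules with $N_i(\sm)\ne0$, pick $0\ne v_i\in N_i(\sm)$ with $\sm v_i=0$ as above so that $N_i=Uv_i$, and consider the homomorphism $U/U\sm\to\bigoplus_{i=1}^r N_i$ sending $\bar 1$ to $(v_1,\dots,v_r)$. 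Its image is a submodule that surjects onto every summand, and since the $N_i$ are pairwise non-isomorphic simple modules the only such submodule is $\bigoplus_i N_i$ itself; taking $\sm$-components (an exact operation on Gelfand-Tsetlin modules) gives a surjection $(U/U\sm)(\sm)\twoheadrightarrow\bigoplus_i N_i(\sm)$, so $r\le\sum_i\dim N_i(\sm)\le\dim(U/U\sm)(\sm)\le Q_n$, the last inequality being part~(i) applied to the cyclic module $U/U\sm$ with generator $\bar 1$. This completes the plan.
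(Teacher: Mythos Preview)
The paper does not give its own proof of this theorem; it is quoted as \cite[Corollary~5.3]{FO2}, so there is nothing in the paper to compare against beyond the auxiliary Theorem~\ref{thm-bound-dim}. Judged on its own, your argument is sound where you claim it is: the count $|\Phi_{\sm}|\le Q_n$ is correct (each $\phi\in\Phi_{\sm}$ is the image of some $(\sigma[1],\dots,\sigma[n-1])\in S_1\times\cdots\times S_{n-1}$ under the surjection you describe), and together with Theorem~\ref{thm-bound-dim} this yields~(i) whenever the cyclic generator satisfies $\sm x=0$, in particular for all irreducibles. Your proof of~(ii) is complete: $U/U\sm$ is nonzero by freeness of $U$ over $\Ga$, it is Gelfand--Tsetlin by Lemma~\ref{lem-cyclic-Gelfand-Tsetlin}, the Zorn argument produces an irreducible quotient with $\sm$ in its support, and the upper bound follows exactly as you wrote since $\sm\bar 1=0$ lets you invoke Theorem~\ref{thm-bound-dim} directly for $U/U\sm$.

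The gap you flag in the general cyclic case of~(i) (generator $x$ with $\sm^k x=0$, $k>1$) is genuine and you have located it precisely. Theorem~\ref{thm-bound-dim} as stated in this paper requires $\sm x=0$, and a naive filtration by the $\sm$-socle series only gives a bound proportional to $Q_n$, not $Q_n$ itself. The sharp bound really does rest on the finite-dimensional fiber algebra $A(\sm)$ of the Galois order and the fact that $M(\sm)$ is a cyclic $A(\sm)$-module of dimension at most $\dim A(\sm)=|\Phi_{\sm}|$; this is developed in \cite{FO2} and is not reproduced here, so your deferral to that reference is appropriate rather than evasive.
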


The theorem above shows that the elements of $\Sp\Ga$ classify the irreducible $\gl(n)$-modules (and, hence, irreducible $\mathfrak{sl}(n)$-modules) up to some finiteness.

The following  result  will be used in Sections \ref{sec-action} and \ref{sec-last}.

\begin{lemma}\label{lem-cyclic-Gelfand-Tsetlin}
Let $\sm\in\Sp\Ga$ and $M$  be  a $\gl(n)$-module generated by a nonzero element $v\in M(\sm)$. Then $M$ is a Gelfand-Tsetlin module.  

\end{lemma}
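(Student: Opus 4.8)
The goal is to show that a cyclic module $M = U v$ with $v \in M(\sm)$ is a Gelfand-Tsetlin module, i.e.\ that $M = \bigoplus_{\sm' \in \Sp \Ga} M(\sm')$. First I would observe that $M = U v$ is a finitely generated $U$-module, so the condition of Definition~\ref{definition-of-GZ-modules} that needs checking is precisely the decomposition into generalized $\Ga$-eigenspaces. The key structural input is the fact, recalled just before Theorem~\ref{thm-bound-dim}, that $U$ is free as a right $\Ga$-module. Fix a right $\Ga$-basis $\{u_i\}_{i \in I}$ of $U$ (or at least a generating set), so that $M = U v = \sum_i u_i \Ga v$. Thus it suffices to show that each cyclic $\Ga$-module $\Ga v$ decomposes as a direct sum of generalized eigenspaces, and that the (possibly infinite) sum $\sum_i u_i (\Ga v)$ inherits such a decomposition.

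**Reducing to $\Ga v$.** Since $v \in M(\sm)$, by definition $\sm^k v = 0$ for some $k \geq 0$. Hence $\Ga v$ is a module over the Artinian local ring $\Ga / \sm^k$, so $\Ga v$ is finite-dimensional over $\C$ and is itself contained in $M(\sm)$; in particular $\Ga v = (\Ga v)(\sm)$ is a single generalized eigenspace. Now each $u_i (\Ga v)$ is a finite-dimensional $\C$-subspace of $M$ which is stable under $\Ga$: indeed, because $U$ is a Galois order with respect to $\Ga$ (the fact cited for Theorems~\ref{thm-bound-dim} and~\ref{theorem-finiteness-for-gl-n}), left multiplication by $\Ga$ maps $u_i \Ga$ into $\sum_{j} u_j \Ga$ with the ``shift'' structure controlled by $\cM$ and $G$; more concretely, for $\gamma \in \Ga$ and a fixed $u_i$ there are finitely many $u_j$ and elements $\gamma_{ij} \in \Ga$ with $\gamma u_i = \sum_j u_j \gamma_{ij}$, so $\gamma u_i (\Ga v) \subseteq \sum_j u_j (\Ga v)$. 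Therefore $N := \sum_i u_i (\Ga v)$ is a $\Ga$-submodule of $M$ equal to all of $M$ (it contains $v = 1 \cdot v$ and is $U$-stable, being closed under left multiplication by all $u_i$ and by $\Ga$), and it is a sum of finite-dimensional $\Ga$-stable subspaces.

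**Finishing.** A $\Ga$-module which is the sum (even infinite) of finite-dimensional $\Ga$-submodules is locally finite, hence decomposes as the direct sum of its generalized eigenspaces $M(\sm')$, $\sm' \in \Sp \Ga$: on each finite-dimensional piece $W$, the commutative algebra $\Ga$ acts by commuting operators, so $W = \bigoplus_{\sm'} W(\sm')$ by the primary decomposition, and these decompositions are compatible on overlaps because each $W(\sm')$ is intrinsically characterized as $\{ w \in W \mid (\sm')^k w = 0,\ \exists k\}$. Summing over all the pieces $u_i(\Ga v)$ gives $M = \sum_{\sm'} M(\sm')$, and the sum is direct because distinct generalized eigenspaces of a commuting family always intersect trivially. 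This establishes the claim.

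**Main obstacle.** The only genuine subtlety is justifying that left multiplication by $\Ga$ on $U$ behaves well with respect to a chosen right $\Ga$-module decomposition, so that each $u_i(\Ga v)$ is $\Ga$-stable (equivalently, that $N = \sum_i u_i(\Ga v)$ is really $\Ga$-invariant and not just $\C$-spanned). This is exactly where the Galois order property of $U$ relative to $\Ga$ is used, together with freeness of $U$ over $\Ga$; both are quoted in the preliminaries. Once that is in hand, the rest is the standard primary decomposition for a locally finite module over a commutative algebra, carried out compatibly across the finite-dimensional pieces.
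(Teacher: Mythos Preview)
Your approach is essentially the paper's: both arguments rest on the fact that $\Ga$ is a Harish-Chandra subalgebra of $U$ (for every $u\in U$ the bimodule $\Ga u\Ga$ is finitely generated as a right $\Ga$-module), and use it to show that $M=Uv$ is locally $\Ga$-finite, after which the primary decomposition over the commutative algebra $\Ga$ yields the Gelfand--Tsetlin splitting. The paper carries this out concretely, working with the Lie algebra generators $E_{i,i\pm 1}$ and showing each $E_{i,i\pm 1}v$ is killed by a polynomial in every generator of $\Ga$; you package the same idea through a right $\Ga$-basis of $U$.

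There is one imprecision worth correcting. You assert that each $u_i(\Ga v)$ is $\Ga$-stable, but your computation only gives $\gamma\cdot u_i(\Ga v)\subseteq \sum_j u_j(\Ga v)$, which does not make the individual summand $\Ga$-invariant. The parenthetical ``equivalently, that $N=\sum_i u_i(\Ga v)$ is $\Ga$-invariant'' is not equivalent to this and is in fact automatic, since $N=M$. What you actually need (and what the Harish-Chandra property delivers directly) is that for each $u\in U$ the $\Ga$-module $\Ga u\Ga\cdot v$ is finite-dimensional: writing $\Ga u\Ga=\sum_{l=1}^{r}b_l\Ga$ as a right $\Ga$-module, one has $\Ga u\Ga\cdot v\subseteq\sum_{l=1}^{r}b_l(\Ga v)$, a finite sum of finite-dimensional spaces, and this subspace is visibly $\Ga$-stable. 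With $u_i(\Ga v)$ replaced by $\Ga u_i\Ga\cdot v$ throughout, your ``Finishing'' paragraph applies verbatim and the proof is complete.
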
 

\begin{proof}
Recall that a commutative subalgebra $A$ of some associative algebra $B$ is a Harish-Chandra subalgebra, if   for
any $b\in B$, the $A-$bimodule $A bA$ is
finitely generated both as a left and as a right
$A$-module (for details see \cite{DFO3}). By Corollary 5.4 and Proposition 7.2 in \cite{FO1}, the Gelfand-Tsetlin subalgebra $\Gamma$ is a Harish-Chandra subalgebra of $U$.

Let  $z_1,\ldots,z_m$, $m=\frac{n(n+1)}{2}$, be a set of generators of $\Gamma$. Then there exist polynomials $f_j\in {\mathbb C}[x]$, $j=1,\ldots,m$ such that $f_j(z_j)v=0$ for all $j$. Let $z$ be any generator of $\Gamma$. 
Since $\Gamma$ is a Harish-Chandra subalgebra, there exist $\gamma_1,\ldots,\gamma_s\in \Gamma$ such that for any large power $N$ of $z$ we can write
$$z^{N}E_{i,i+1}=\sum_{j=1}^s \gamma_i E_{i,i+1} g_j,$$
for any $i$ and for some $g_j\in \Gamma$.
  Using this  and the fact  that there are only finitely many linearly independent elements in the set 
 $$\{z_i^{t_i}v| t_i\in \mathbb Z_{\geq 0}, i=1, \ldots, m\},$$
 we conclude that $h_i(z)E_{i,i+1}v=0$ for each $i$ and some polynomial $h_i\in \mathbb C[x]$. We apply similar reasoning  for $E_{i,i-1}v$. The computation for $E_{ii}v$ is trivial since $E_{ii}\in \Gamma$, which completes the proof.  \end{proof}

\subsection{Finite dimensional modules for $\mathfrak{gl}(n)$}

In this section we recall a classical result of  Gelfand and  Tsetlin which provides an explicit basis  for every irreducible finite dimensional $\mathfrak{gl}(n)$-module.

\begin{definition} The following array  $[T]$ with complex  entries $\{t_{ij}:1\leq j\leq i\leq n\}$ 
\medskip
\begin{center}

\Stone{\mbox{ \scriptsize {$t_{n1}$}}}\Stone{\mbox{ \scriptsize {$t_{n2}$}}}\hspace{1cm} $\cdots$ \hspace{1cm} \Stone{\mbox{ \scriptsize {$t_{n,n-1}$}}}\Stone{\mbox{ \scriptsize {$t_{nn}$}}}\\[0.2pt]
\Stone{\mbox{ \scriptsize {$t_{n-1,1}$}}}\hspace{1.5cm} $\cdots$ \hspace{1.5cm} \Stone{\mbox{ \tiny {$t_{n-1,n-1}$}}}\\[0.3cm]
\hspace{0.2cm}$\cdots$ \hspace{0.8cm} $\cdots$ \hspace{0.8cm} $\cdots$\\[0.3cm]
\Stone{\mbox{ \scriptsize {$t_{21}$}}}\Stone{\mbox{ \scriptsize {$t_{22}$}}}\\[0.2pt]
\Stone{\mbox{ \scriptsize {$t_{11}$}}}\\
\medskip
\end{center}
is called a \emph{Gelfand-Tsetlin tableau}. 
\end{definition}

We will often identify $[T]$ with an element of ${\mathbb C}^{\frac{n(n+1)}{2}} \simeq \Sp \Lambda$ and  write $[T] = (t_{n1},...,t_{nn}|...|t_{21}, t_{22}|t_{11})$ or simply $[T] = (t_{ij})$. In particular we consider $\delta^{ij}$ and $\ell_{\sm}$ defined in the previous section as Gelfand-Tsetlin tableaux.

A Gelfand-Tsetlin tableau  $(t_{ij})$  is called \emph{standard} if:
$$t_{ki}-t_{k-1,i}\in\mathbb{Z}_{\geq 0} \hspace{0.3cm} and \hspace{0.3cm} t_{k-1,i}-t_{k,i+1}\in\mathbb{Z}_{> 0}, \hspace{0.2cm}\text{ for all } 1\leq i\leq k\leq n-1.$$
Note that, for sake of convenience, the second condition above is slightly different from the original condition in \cite{GT}.

\begin{theorem}[\cite{GT}, \cite{Zh}]\label{Gelfand-Tsetlin theorem}
Let $L(\lambda)$ be the finite dimensional irreducible module over $\mathfrak{gl}(n)$ of highest weight $\lambda=(\lambda_{1},\ldots,\lambda_{n})$. Then there exists a basis of $L(\lambda)$ consisting of all standard tableaux $[T] = (t_{n1},...,t_{nn}|...|t_{21}, t_{22}|t_{11})$ with fixed top row $t_{n1}=\lambda_1,t_{n2}=\lambda_2-1,\ldots,t_{nn}=\lambda_{n}-n+1$. Moreover,  the action of the generators of $\mathfrak{gl}(n)$ on $L(\lambda)$ is given by the  \emph{Gelfand-Tsetlin formulas}:

$$E_{k,k+1}([T])=-\sum_{i=1}^{k}\left(\frac{\prod_{j=1}^{k+1}(t_{ki}-t_{k+1,j})}{\prod_{j\neq i}^{k}(t_{ki}-t_{kj})}\right)[T+\delta^{ki}],$$

$$E_{k+1,k}([T])=\sum_{i=1}^{k}\left(\frac{\prod_{j=1}^{k-1}(t_{ki}-t_{k-1,j})}{\prod_{j\neq i}^{k}(t_{ki}-t_{kj})}\right)[T-\delta^{ki}],$$

$$E_{kk}([T])=\left(\sum_{i=1}^{k}(t_{ki}+i-1)-\sum_{i=1}^{k-1}(t_{k-1,i}+i-1)\right)[T],$$
where $[T\pm\delta^{ki}]$ is the tableau obtained by $[T]$ adding $\pm 1$ to the $(k,i)$-th entry of $[T]$. If the new tableau $[T\pm\delta^{ki}]$ is not standard, then the corresponding summand of $E_{k,k+1}([T])$ or $E_{k+1,k}([T])$ is zero by definition.  Furthermore, for $s\leq r$, 
\begin{equation}\label{action of Gamma in finite dimensional modules}
c_{rs}([T])=\gamma_{rs}(t)[T],
\end{equation}
where $\gamma_{rs}$ are defined in (\ref{def-gamma}).
\end{theorem}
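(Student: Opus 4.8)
The plan is to argue by induction on $n$, with the case $n=1$ immediate, using the branching rule for the pair $\gl(n)\supset\gl(n-1)$ as the engine of the induction. \textbf{Step 1 (branching).} I would first establish that the restriction of $L(\lambda)$ to $\gl(n-1)$ is multiplicity free, namely $\mathrm{Res}^{\gl(n)}_{\gl(n-1)}L(\lambda)\cong\bigoplus_{\mu}L(\mu)$ where $\mu=(\mu_1,\dots,\mu_{n-1})$ runs over the weights with $\lambda_i\geq\mu_i\geq\lambda_{i+1}$; in the shifted entries $t_{nj}=\lambda_j-j+1$ this interlacing is exactly the standardness condition for the $(n-1)$-st row. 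The quickest route is the character identity $s_\lambda(x_1,\dots,x_n)=\sum_{\mu\prec\lambda}s_\mu(x_1,\dots,x_{n-1})x_n^{|\lambda|-|\mu|}$, proved combinatorially by splitting a semistandard tableau according to the positions of the entry $n$, which gives the decomposition on the level of characters; multiplicity-freeness then promotes it to an honest decomposition of modules.

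\textbf{Step 2 (tableau basis, standardness, and $E_{kk}$).} Iterating Step 1 down the chain $\gl(n)\supset\gl(n-1)\supset\dots\supset\gl(1)$, each restriction is multiplicity free, so the joint isotypic components along the whole chain are one dimensional and are indexed precisely by chains $\lambda=\lambda^{(n)}\succ\lambda^{(n-1)}\succ\dots\succ\lambda^{(1)}$ of interlacing weights, equivalently by standard Gelfand-Tsetlin tableaux $[T]$ with top row $t_{nj}=\lambda_j-j+1$. A choice of nonzero vector in each component yields the asserted basis, and the interlacing inequalities are the standardness conditions row by row. The action of $E_{kk}$ is then essentially free: $[T]$ is a weight vector and its weight is pinned down by its $\gl(k)$- and $\gl(k-1)$-components (equivalently, by the eigenvalues of $c_{k1}=E_{11}+\dots+E_{kk}$ and of $c_{k-1,1}$), giving $E_{kk}([T])=\big(\sum_{i=1}^k(t_{ki}+i-1)-\sum_{i=1}^{k-1}(t_{k-1,i}+i-1)\big)[T]$.

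\textbf{Step 3 (action of $\Gamma$ and of $E_{k,k\pm1}$).} Since $c_{rs}\in Z_r$ is central in $U_r$, it acts by a scalar on the $\gl(r)$-component of $[T]$, and that scalar is the value of the Harish-Chandra image of $c_{rs}$ on the highest weight $\lambda^{(r)}$; as the function $\gamma_{rs}$ of \eqref{def-gamma} is by construction exactly this image written in the variables $\lambda_{rj}\mapsto t_{rj}$, we obtain $c_{rs}([T])=\gamma_{rs}(t)[T]$, the claimed action of the Gelfand-Tsetlin subalgebra (this is essentially the identification of $\imath(\Gamma)$ with $\Lambda^G$ already cited from \cite{Zh}). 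For the off-diagonal generators, the relation $[E_{ab},E_{k,k+1}]=\delta_{bk}E_{a,k+1}$ shows that $E_{k,k+1}$ commutes with $U(\gl(k-1))$ and that $(E_{1,k+1},\dots,E_{k,k+1})$ transforms under $\gl(k)$ as the standard module $\mathbb{C}^k$, and $(E_{k+1,1},\dots,E_{k+1,k})$ as its dual. Hence $E_{k,k+1}$ preserves the $\gl(k-1)$-isotypic components, so it can modify only the $k$-th row of $[T]$, and by a Wigner-Eckart argument its matrix element from $[T]$ to $[T+\delta^{ki}]$ factors as a $\gl(k)$ Clebsch-Gordan coefficient for the projection $L(\lambda^{(k)})\otimes\mathbb{C}^k\twoheadrightarrow L(\lambda^{(k)}+\varepsilon_i)$ — which forces the shift to be precisely $+\delta^{ki}$ and to vanish unless $[T+\delta^{ki}]$ is standard — times a reduced matrix element depending only on the rows $k-1,k,k+1$. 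These reduced matrix elements are then determined by feeding the already-computed eigenvalues of $c_{k,s}$ and $c_{k+1,s}$ (equivalently, the quadratic Casimir of $\gl(k+1)$) into the relation $[E_{k,k+1},E_{k+1,k}]=E_{kk}-E_{k+1,k+1}$; this produces a first-order recursion in the entries of row $k$ whose unique solution, once the overall sign is fixed on one distinguished tableau, is the product displayed in the theorem, and symmetrically for $E_{k+1,k}$. Alternatively, and closer to \cite{Zh}, one may posit the displayed formulas as an ansatz, check directly that $E_{k,k\pm1},E_{kk}$ satisfy the defining relations of $\gl(n)$, note that the tableau with every row equal to the top row is a highest weight vector of weight $\lambda$, and invoke Steps 1--2 to conclude the resulting module is isomorphic to $L(\lambda)$.

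I expect Step 3 to be the main obstacle: extracting the precise rational coefficients — whether through the Clebsch-Gordan/reduced-matrix-element bookkeeping or through the direct verification of the $\gl(n)$ relations — is the genuinely computational core of the theorem, whereas Steps 1--2 and the centrality argument for $\Gamma$ are comparatively formal.
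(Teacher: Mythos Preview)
The paper does not prove this theorem; it is stated with attribution to \cite{GT} and \cite{Zh} and quoted as a classical result, so there is no ``paper's own proof'' to compare against. Your outline is a faithful sketch of the standard argument found in those references: the multiplicity-free branching along the chain $\gl(n)\supset\gl(n-1)\supset\cdots\supset\gl(1)$ produces the tableau basis, the diagonal action follows from weight considerations, the action of the $c_{rs}$ follows from centrality in $U_r$ together with the identification $\imath(\Gamma)=\Lambda^G$ already cited in the paper, and the off-diagonal coefficients are pinned down either by a Wigner--Eckart/recursion argument or by direct verification of the $\gl(n)$ relations (the latter being closer to Zhelobenko's treatment). Your self-assessment is accurate: Steps~1--2 and the $\Gamma$-action are formal, while the honest work is the explicit computation in Step~3, which you have outlined but not carried out.
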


\noindent

One immediate consequence of the above theorem is that the algebra $\Gamma$ acts semisimply on  any finite dimensional irreducible module $L(\lambda)$. Moreover, $L(\lambda)$ has a \emph{simple spectrum}, that is, all Gelfand-Tsetlin multiplicities are $1$.

\subsection{Generic Gelfand-Tsetlin modules}\label{section generic modules}

Observing that the coefficients in the Gelfand-Tsetlin formulas in Theorem \ref{Gelfand-Tsetlin theorem} are rational functions on the entries of the tableaux, it is natural to extend the Gelfand-Tsetlin construction to more general modules.  In the case when all denominators are nonintegers, one can use the same formulas and  define a new class of infinite dimensional $\gl(n)$-modules:  {\it generic}  Gelfand-Tsetlin modules (cf. \cite{DFO3}, Section 2.3).
\begin{definition}\label{generic tableau definition}
A Gelfand-Tsetlin tableau $[T]=(t_{ij})$ is called \emph{generic} if $t_{rs}-t_{ru}\notin\mathbb{Z}$ for each $1\leq s<u\leq r\leq n-1$. 
\end{definition}
\begin{theorem}[\cite{DFO3}, Section 2.3]\label{Generic Gelfand-Tsetlin modules}
Let $[T] = (t_{ij})$ be a generic  Gelfand-Tsetlin tableau. Denote by $V([T])$  the set of all Gelfand-Tsetlin tableaux $[L] = (l_{ij})$ satisfying $l_{nj}=t_{nj}$, $l_{ij}-t_{ij}\in\mathbb{Z}$ for , $1\leq j\leq i \leq n$. 
\begin{itemize}
\item[(i)] The vector space with basis $V([T])$ has  a structure of a $\mathfrak{gl}(n)$-module with action of the generators of $\mathfrak{gl}(n)$ given by the Gelfand-Tsetlin formulas.
\item[(ii)] The action of the generators of $\Gamma$ on the basis elements of $V([T])$ is given by (\ref{action of Gamma in finite dimensional modules}). 
\item[(iii)] The module defined  in {\rm (i)} is a Gelfand-Tsetlin module all of whose Gelfand-Tsetlin multiplicities are $1$.
\end{itemize}
\end{theorem}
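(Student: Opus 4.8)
The plan is to verify the three assertions of Theorem~\ref{Generic Gelfand-Tsetlin modules} more or less in the order stated, reducing each one to a computation with the Gelfand-Tsetlin formulas that is made tractable by the genericity hypothesis $t_{rs}-t_{ru}\notin\mathbb{Z}$. First I would observe that genericity is preserved on all of $V([T])$: if $[L]\in V([T])$ then $l_{rs}-l_{ru}=(t_{rs}-t_{ru})+(\text{integer})\notin\mathbb{Z}$, so no denominator $\prod_{j\neq i}(l_{ki}-l_{kj})$ appearing in the $E_{k,k+1}$ and $E_{k+1,k}$ formulas ever vanishes; hence the formulas assign to each generator of $\gl(n)$ a well-defined linear endomorphism of the vector space with basis $V([T])$ (note that, in contrast to the finite-dimensional case, no summand is ever discarded --- every $[L\pm\delta^{ki}]$ again lies in $V([T])$). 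This makes part~(i) into the purely formal statement that these endomorphisms satisfy the defining relations of $\gl(n)$, i.e. the commutator identities $[E_{ij},E_{k\ell}]=\delta_{jk}E_{i\ell}-\delta_{\ell i}E_{kj}$.

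For part~(i), the key point is that the relations need only be checked on each basis tableau $[L]$, and there each relation becomes a rational-function identity in the entries $l_{ij}$. The standard device --- which I would invoke rather than rederive --- is that these identities are already known to hold when the entries are specialized to the integer points parametrizing a finite-dimensional $L(\lambda)$ of sufficiently large highest weight (Theorem~\ref{Gelfand-Tsetlin theorem}), and a rational function in the $t_{ij}$ that vanishes on a Zariski-dense set of such integer points vanishes identically; since the denominators are nonzero at our generic tableau, the identities then hold there as well. Concretely one organizes the check by the three types of relations: $[E_{kk},E_{k,k+1}]$ and $[E_{kk},E_{k+1,k}]$ (the weight relations, which are essentially immediate from the diagonal formula for $E_{kk}$), the Serre-type relations among adjacent raising or lowering operators acting on a single row, and the mixed relation $[E_{k,k+1},E_{k+1,k}]=E_{kk}-E_{k+1,k+1}$, which couples three consecutive rows and is the most computation-heavy. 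Part~(ii) is then immediate: formula~(\ref{action of Gamma in finite dimensional modules}) expresses $c_{rs}$ acting on $[L]$ as multiplication by the scalar $\gamma_{rs}(l)$, and one only has to note that this is consistent with the $\gl(n)$-action just defined --- again a rational identity valid on the dense integer locus, hence everywhere the denominators survive.

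For part~(iii), one reads off from part~(ii) that each basis tableau $[L]\in V([T])$ is a genuine (not merely generalized) eigenvector for all of $\Gamma$, with $\Gamma$-character $\sm_{[L]}$ determined by $(\gamma_{rs}(l))_{s\leqslant r}$; thus $V([T])$ is a Gelfand-Tsetlin module in the sense of Definition~\ref{definition-of-GZ-modules}, provided it is finitely generated --- which follows since the raising and lowering operators connect any two tableaux of $V([T])$ up to the fixed top row, so $V([T])$ is in fact generated by any single $[L]$, and one may invoke Lemma~\ref{lem-cyclic-Gelfand-Tsetlin}. It remains to see that every $\Gamma$-character occurs with multiplicity exactly $1$, i.e. that $[L]\mapsto \sm_{[L]}$ is injective on $V([T])$. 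Here genericity is what does the work: if $\gamma_{rs}(l)=\gamma_{rs}(l')$ for all $s\leqslant r$ with $[L],[L']\in V([T])$, then row by row from the bottom the multisets $\{l_{r1},\dots,l_{rr}\}$ and $\{l'_{r1},\dots,l'_{rr}\}$ agree (the $\gamma_{rs}$, $s=1,\dots,r$, determine the row entries up to permutation, and the nonzero $1/(l_{ri}-l_{rj})$ corrections do not spoil this), and since $l_{ri}-t_{ri}\in\mathbb{Z}$ while the $t_{ri}$ within a row are pairwise non-congruent mod $\mathbb{Z}$, the two multisets can be matched only by the identity, giving $[L]=[L']$. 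The main obstacle I anticipate is the bookkeeping in the mixed relation $[E_{k,k+1},E_{k+1,k}]=E_{kk}-E_{k+1,k+1}$ in part~(i): even with the "specialize to finite-dimensional modules and use Zariski density" shortcut one must be careful that the dense set of integer specializations genuinely lands inside the standard tableaux of some $L(\lambda)$ and avoids all the denominators simultaneously, so that the cancellation of non-standard summands in the finite-dimensional formulas does not interfere with the identity one wants in the generic setting.
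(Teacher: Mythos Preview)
The paper does not supply its own proof of this theorem; it is quoted from \cite{DFO3}, Section~2.3, and stated without argument. Your sketch is therefore not being compared against anything in the present paper, but it is a reasonable reconstruction of the standard argument.

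Parts (i) and (ii) are fine. The Zariski-density reduction to Theorem~\ref{Gelfand-Tsetlin theorem} is the classical device, and you correctly flag the one real hazard: in the finite-dimensional formulas non-standard summands are \emph{defined} to be zero, so one must work with integer specialisations lying deep enough in the interior of the standard pattern that no summand on either side of a commutator relation becomes non-standard. This is easily arranged (choose $\lambda$ with very spread-out entries and tableaux far from the boundary) and still yields a Zariski-dense family.

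In part (iii) there is a small but genuine overclaim. It is \emph{not} true that $V([T])$ is generated by any single tableau $[L]$: the genericity hypothesis only forbids integer differences \emph{within} a row, so the numerators $\prod_j(l_{ki}-l_{k\pm 1,j})$ in the Gelfand-Tsetlin formulas can vanish, and $V([T])$ typically has proper submodules --- this is precisely the content of Theorem~\ref{Basis for irreducible generic modules gl(n)} and Remark~\ref{V([T]) has finite length}. What \emph{is} true, and suffices for your appeal to Lemma~\ref{lem-cyclic-Gelfand-Tsetlin}, is that \emph{some} tableau generates: any $[L]$ with $\Omega^+([L])=\emptyset$ will do (see Remark~\ref{V([T]) has finite length}), and such $[L]$ always exist in $V([T])$ by choosing the integer shifts row by row from the top. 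With that correction, your multiplicity-one argument (each row determined up to permutation by the $\gamma_{rs}$, and the permutation forced to be the identity by the within-row non-integrality) is correct.
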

Note that since $[T]$ is generic all denominators of $E_{k,k+1}([T])$ and $E_{k+1,k}([T])$, $[T] \in V([L])$, are nonzero, so the condition that the summands corresponding to nonstandard tableaux are zero in Theorem \ref{Generic Gelfand-Tsetlin modules} is not needed.  By a slight abuse of notation we will  denote  the module constructed in  Theorem \ref{Generic Gelfand-Tsetlin modules} by $V([T])$ and will call it \emph{the universal generic Gelfand-Tsetlin  module associated with $[T]$}. Note that $V([T])$ need not to be irreducible.  Because $\Gamma$ has simple spectrum on $V([T])$ for $[L]$ in $V([T])$ we may define the \emph{irreducible $\mathfrak{gl}(n)$-module in $V([T])$ containing $[L]$} to be the subquotient of $V([T])$ containing $[L]$ (see Remark \ref{V([T]) has finite length} below).
A basis for the irreducible subquotients of $V([T])$  can be described as follows.

\begin{definition}\label{definition of Omega +} Let $[T] = (t_{ij})$ be a fixed Gelfand-Tsetlin tableau. For any $[L]=(l_{ij})\in V([T])$, and for any $1<r\leq n$, $1\leq s\leq r$ and $1\leq u \leq r-1$ we define:
$$\Omega^{+}([L]):=\{(r,s,u): l_{rs}-l_{r-1,u}\in \mathbb{Z}_{\geq 0}\}$$
\end{definition}

The following theorem is a subject of a direct verification. Details can be found in \cite{FGR2}

\begin{theorem}\label{Basis for irreducible generic modules gl(n)}
Let $[T]$ be a generic tableau and let $[L]$ be a tableau in $V([T])$. Then the following hold.
\begin{itemize}
\item[(i)] The submodule of $V([T])$ generated by $[L]$  has basis $$\mathcal{N}([L]):=\{[R]\in V([T]):\Omega^{+}([L])\subseteq \Omega^{+}([R])\};$$
\item[(ii)] The irreducible $\mathfrak{gl}(n)$-module in $V([T])$ containing $[L]$ has basis
$$\mathcal{I}([L]):=\{[R]\in V([T]):\Omega^{+}([L])=\Omega^{+}([R])\}.$$
\end{itemize}
The action of $\mathfrak{gl}(n)$ on both $\mathcal{N}([L])$ and $\mathcal{I}([L])$ is given by the Gelfand-Tsetlin formulas. 
\end{theorem}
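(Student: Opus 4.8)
The plan is to verify both parts by checking that the proposed spanning sets are invariant under the Gelfand-Tsetlin action, and then that no smaller invariant set works. First I would recall that by Theorem \ref{Generic Gelfand-Tsetlin modules} the action of every generator $E_{k,k+1}$, $E_{k+1,k}$, $E_{kk}$ on a basis element $[R] \in V([T])$ is given by the Gelfand-Tsetlin formulas, and since $[T]$ is generic, no denominator vanishes; thus $E_{k,k+1}([R])$ is a nonzero multiple of $[R+\delta^{ki}]$ summed over those $i$ for which $[R+\delta^{ki}] \in V([T])$, and similarly for $E_{k+1,k}$. The key combinatorial observation is to track how $\Omega^{+}$ changes under a single shift $[R] \mapsto [R \pm \delta^{ki}]$: adding $\delta^{ki}$ increases $l_{ki}$ by one, so it can only \emph{add} triples of the form $(k,i,u)$ to $\Omega^{+}$ (when $l_{ki}-l_{k-1,u}$ passes from $-1$ to $0$) and can only \emph{remove} triples of the form $(k+1,s,i)$ (when $l_{k+1,s}-l_{ki}$ passes from $0$ to $-1$); subtracting $\delta^{ki}$ does the reverse. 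Crucially, the Gelfand-Tsetlin coefficient of the summand $[R+\delta^{ki}]$ in $E_{k,k+1}([R])$ vanishes precisely when $[R+\delta^{ki}]$ fails to satisfy one of the standardness-type inequalities relevant here, and one checks that the numerator $\prod_{j}(t_{ki}-t_{k+1,j})$ kills exactly the transitions that would otherwise \emph{lose} a triple $(k+1,s,i)$ — so applying a raising or lowering operator never decreases $\Omega^{+}$ except through a summand whose coefficient is zero.

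With that observation in hand, part (i) follows: $\mathcal{N}([L])$ is the set of $[R]$ with $\Omega^{+}([L]) \subseteq \Omega^{+}([R])$, and the previous paragraph shows this set is stable under all generators, so it spans a submodule containing $[L]$; conversely, starting from $[L]$ one reaches every $[R]$ with $\Omega^{+}([L]) \subseteq \Omega^{+}([R])$ by a sequence of single shifts that only adds triples, which requires a connectivity argument — given such an $[R]$, one orders the coordinates in which $[R]$ and $[L]$ differ so that the intermediate tableaux stay inside $V([T])$ and have $\Omega^{+}$ containing $\Omega^{+}([L])$, and checks the relevant coefficient is nonzero at each step (using genericity for the denominators and the explicit form of the numerators to see the needed summand survives). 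For part (ii), the module with basis $\mathcal{N}([L])$ has a unique maximal submodule spanned by those $[R]$ with $\Omega^{+}([L]) \subsetneq \Omega^{+}([R])$ — this is a submodule by the same monotonicity, and it is proper and maximal because the quotient, spanned by $\mathcal{I}([L]) = \{[R] : \Omega^{+}([R]) = \Omega^{+}([L])\}$, is generated by the image of any of its basis elements (the connectivity argument again, now with $\Omega^{+}$ held constant) and hence irreducible. That $\Gamma$ acts by the scalars $\gamma_{rs}(t)$ with simple spectrum (Theorem \ref{Generic Gelfand-Tsetlin modules}(iii)) identifies this irreducible quotient with the "irreducible module containing $[L]$" in the sense defined above.

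The main obstacle I expect is the connectivity/irreducibility step: showing that within the set of tableaux with a prescribed value (or lower bound) of $\Omega^{+}$ one can always pass between any two elements by single admissible shifts with nonvanishing coefficients. The difficulty is that a naive coordinate-by-coordinate path may leave $V([T])$-admissibility intact but momentarily violate an inequality that drops a required triple, or hit a summand whose Gelfand-Tsetlin coefficient vanishes for numerator reasons. The remedy is to choose the order of shifts carefully — raising operators applied "from the top down" and lowering operators "from the bottom up," or more precisely ordering by the relevant differences $l_{rs}-l_{r-1,u}$ — and to use that the numerator factors $\prod_j (t_{ki}-t_{k+1,j})$ in $E_{k,k+1}$ vanish only at the boundary transitions we are deliberately avoiding. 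Since the excerpt states this theorem "is a subject of a direct verification" with details in \cite{FGR2}, a complete but not overlong proof here amounts to writing out the two monotonicity lemmas for $\Omega^{+}$ under single shifts and the one connectivity lemma, then assembling them as above.
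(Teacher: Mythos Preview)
The paper does not actually prove this theorem: it states that the result ``is a subject of a direct verification'' and refers to \cite{FGR2} for details. So there is no in-paper argument to compare against; what can be said is whether your outline matches what such a direct verification must look like.

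Your monotonicity analysis is correct and is exactly the heart of the matter. Increasing $l_{ki}$ by $1$ can only add triples $(k,i,u)$ and can only remove a triple $(k{+}1,s,i)$ when $l_{k+1,s}-l_{ki}=0$; in that case the numerator factor $(t_{ki}-t_{k+1,s})$ in the $E_{k,k+1}$ coefficient vanishes. The symmetric statement for $E_{k+1,k}$ holds with the numerator $\prod_j(t_{ki}-t_{k-1,j})$. Thus $\mathcal N([L])$ is stable under the generators, and so is the span of tableaux with $\Omega^+$ strictly containing $\Omega^+([L])$; your deduction of (ii) from (i) via the unique maximal submodule is the right one, once connectivity is known.

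You correctly flag connectivity as the only nontrivial remaining step, and your proposed remedy is on target but underspecified. One clean way to carry it out: since $\Gamma$ has simple spectrum on $V([T])$, it suffices to exhibit, for any $[R]$ with $\Omega^+([L])\subseteq\Omega^+([R])$, a path of single shifts $[L]=[R_0],[R_1],\ldots,[R_N]=[R]$ in which each step has nonzero Gelfand--Tsetlin coefficient (one can then interleave $\Gamma$-projections to kill the other summands). A workable ordering is to first adjust entries in row $n-1$, then row $n-2$, and so on, and within each row to move each entry monotonically toward its target; the point is that the obstructing equalities $t_{ki}=t_{k\pm1,j}$ that would kill a coefficient are exactly the equalities witnessing a triple in $\Omega^+$ about to be lost, and your ordering guarantees those triples are already ``safe'' (either not in $\Omega^+([L])$, or preserved because the adjacent row has already been adjusted). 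Writing this down carefully, together with the dual statement needed for irreducibility of the quotient, is precisely the content deferred to \cite{FGR2}. Your proposal is therefore the expected direct verification; the only gap is that the connectivity lemma is described rather than proved.
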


\begin{remark}\label{V([T]) has finite length}
The module $V([T])$ has finite length. Indeed,  $V([T])$ is generated by any tableau $[L]\in V([T])$ such that $\Omega^{+}([L])=\emptyset$. Then, since $V([T])$ is a cyclic module, it is a quotient of $U/U\sm$ for some generic $\sm$. By Theorem $4.14$ in \cite{FO2}, the module $U/U\sm$ (and hence $V([T])$ itself) has finite length.
\end{remark}

\subsection{ Gelfand-Tsetlin formulas in terms of permutations}
\medskip

In this subsection we rewrite and generalize the Gelfand-Tsetlin formulas in Theorem \ref{Gelfand-Tsetlin theorem} in convenient for us terms.  

For a vector  $v=(v_{n1},...,v_{nn}|\cdots|v_{11})$ in $\mathbb{C}^{\frac{n(n+1)}{2}}$, by $[T(v)]$, or simply by $T(v)$, we will denote the corresponding to $v$ Gelfand-Tsetlin tableau.  Let us call $v$ in ${\mathbb C}^{\frac{n(n+1)}{2}}$ {\it generic} if $T(v)$ is a generic Gelfand-Tsetlin tableau, and denote by ${\mathbb C}^{\frac{n(n+1)}{2}}_{\rm gen}$ the set of all generic vectors in ${\mathbb C}^{\frac{n(n+1)}{2}}$.

Let $\widetilde{S}_m$ denotes the subset of $S_m$ consisting of the transpositions $(1,i)$, $i=1,...,m$. For $\ell < m $, set $\Phi_{\ell m} =  \widetilde{S}_{m-1} \times\cdots\times \widetilde{S}_{\ell}$. For $\ell > m$ we set $\Phi_{\ell m} = \Phi_{m \ell}$. Finally we let $\Phi_{\ell \ell} = \{ \mbox{Id}\}$.  Every $\sigma$ in $\Phi_{\ell m }$ will be written as a $|\ell- m|$-tuple  of transpositions $\sigma[t]$ (recall that $\sigma[t]$ is the $t$-th component of $\sigma$). Also, we consider every $\sigma\in \Phi_{\ell m} $ as an element of $G = S_n \times \cdots \times S_1$ by letting  $\sigma [t] = \mbox{Id}$ whenever  $t<\min(\ell, m)$ or  $ t > \max(\ell, m)-1$.

\begin{remark}
Recall that in order to have well defined action of $\Phi_{\ell m }$ on $\mathbb{C}^{\frac{n(n+1)}{2}}$, for $w\in\mathbb{C}^{\frac{n(n+1)}{2}}$ and $\sigma\in\Phi_{\ell m }$, we set  
$$
\sigma(w):=(w_{n,\sigma^{-1}[n](1)},\ldots,w_{n,\sigma^{-1}[n](1)}|\ldots|w_{1,\sigma^{-1}[1](1)}).$$
\end{remark}

\begin{definition} \label{def-e-lm}
Let $1 \leq r < s \leq n$. Set
$$
\varepsilon_{rs}:=\delta^{r,1}+\delta^{r+1,1}+\ldots+\delta^{s-1,1} \in \mathbb{C}^{\frac{n(n+1)}{2}}.
$$
Furthermore, define $\varepsilon_{rr}=0$ and  $\varepsilon_{sr}=- \varepsilon_{rs}$.
\end{definition}

\begin{definition}\label{definition of coefficients e_rs}
For each generic vector $w$ and any $1\leq r, s\leq n$ we define
$$
e_{rs}(w):=
\begin{cases}
\left(\displaystyle\prod_{j=r}^{s-2}e_{j}^{(+)}(w)\right)e_{s-1,s}(w) & \text { if }\ \ \ r<s\\
e_{s+1,s}(w)\left(\displaystyle\prod_{j=s+2}^{r}e_{j}^{(-)}(w)\right) &  \text { if }\ \ \ r>s\\
\sum_{i=1}^{r}(w_{ri}+i-1)-\sum_{i=1}^{r-1}(w_{r-1,i}+i-1) &  \text { if }\ \ \ r=s,
\end{cases}
$$
where
\begin{align*}
e_{t}^{(+)}(w):=\frac{\prod_{j\neq 1}^{t+1}(w_{t1}-w_{t+1,j})}{\prod_{j\neq 1}^{t}(w_{t1}-w_{tj})} & \ \ \ \ ; \ \ \ \
e_{t+1}^{(-)}(w) :=\frac{\prod_{j\neq 1}^{t-1}(w_{t1}-w_{t-1,j})}{\prod_{j\neq 1}^{t}(w_{t1}-w_{tj})} \\
e_{k,k+1}(w):=- \frac{\prod_{j=1}^{k+1}(w_{k1}-w_{k+1,j})}{\prod_{j\neq 1}^{k}(w_{k1}-w_{kj})} & \ \ \ \ ; \ \ \ \ e_{k+1,k}(w):= \frac{\prod_{j=1}^{k-1}(w_{k1}-w_{k-1,j})}{\prod_{j\neq 1}^{k}(w_{k1}-w_{kj})}.
\end{align*}
\end{definition}

It is not difficult to prove the following generic module version of Theorem \ref{Gelfand-Tsetlin theorem} (see also Theorem 2 in \cite{Maz2}).

\begin{proposition}\label{coefficients e_ij} Let $v \in {\mathbb C_{\rm gen}^{\frac{n(n+1)}{2}}}$. The Gelfand-Tsetlin formulas for the generic Gelfand Tsetlin $\mathfrak{gl}(n)$-module  $V(T(v))$ can be written as follows:
$$
E_{\ell m} (T(v+z))= \sum_{\sigma \in \Phi_{\ell m}} e_{\ell m} (\sigma (v+z)) T(v+z+\sigma(\varepsilon_{\ell m})),$$
for $z \in {\mathbb Z^{\frac{n(n+1)}{2}}}$ and $1 \leq m \leq \ell \leq n$. 
\end{proposition}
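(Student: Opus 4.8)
The plan is to verify that Proposition~\ref{coefficients e_ij} is nothing more than a repackaging of the classical Gelfand-Tsetlin formulas from Theorem~\ref{Gelfand-Tsetlin theorem}, so the proof proceeds by induction on $|\ell-m|$ together with a careful bookkeeping of which permutations in $\Phi_{\ell m}$ appear. First I would treat the base cases: when $\ell=m$, both sides reduce to the diagonal formula $E_{mm}(T(v+z)) = \left(\sum_{i=1}^{m}((v+z)_{mi}+i-1) - \sum_{i=1}^{m-1}((v+z)_{m-1,i}+i-1)\right)T(v+z)$, since $\Phi_{mm}=\{\mathrm{Id}\}$, $\varepsilon_{mm}=0$, and $e_{mm}$ is precisely this expression; and when $|\ell-m|=1$, say $\ell=k+1$, $m=k$ (the raising case $\ell=k$, $m=k+1$ is symmetric), one has $\Phi_{k+1,k}=\widetilde{S}_k$, whose elements are the transpositions $(1,i)$ for $i=1,\dots,k$. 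Here the key observation is that acting by $(1,i)\in\widetilde{S}_k$ on $v+z$ swaps the entries $(v+z)_{k1}$ and $(v+z)_{ki}$ in the $k$th row, so $e_{k+1,k}((1,i)(v+z))$ becomes exactly the $i$th summand $\frac{\prod_{j=1}^{k-1}(t_{ki}-t_{k-1,j})}{\prod_{j\neq i}^{k}(t_{ki}-t_{kj})}$ of the classical formula (the product $\prod_{j\neq 1}^{k}$ in the denominator turns into $\prod_{j\neq i}^{k}$ after the swap, since the factor that was excluded, $j=1$, is now the former position of $i$), and $\sigma(\varepsilon_{k+1,k}) = \sigma(-\delta^{k,1}) = -\delta^{k,i}$, so $T(v+z+\sigma(\varepsilon_{k+1,k})) = T(v+z-\delta^{ki}) = [T-\delta^{ki}]$.

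Next I would carry out the inductive step. For $\ell < m$ with $m-\ell \geq 2$, I would write $E_{\ell m}$ in terms of a commutator of generators that step down by one — concretely, for the raising operators one has the relation $E_{k,k+2}=[E_{k,k+1},E_{k+1,k+2}]$ and more generally $E_{\ell m} = [E_{\ell,\ell+1}, E_{\ell+1,m}]$ in $U(\gl(n))$ (and symmetrically $E_{m\ell}=[E_{m,m-1},E_{m-1,\ell}]$ for the lowering side). Applying the inductive hypothesis to $E_{\ell+1,m}$ (a sum over $\Phi_{\ell+1,m}$) and the base case to $E_{\ell,\ell+1}$, and then expanding the commutator, I would obtain a sum over pairs consisting of a transposition $(1,i)\in\widetilde{S}_\ell$ and an element of $\Phi_{\ell+1,m}$; the two terms in the commutator (the $E_{\ell,\ell+1}E_{\ell+1,m}$ order and the reverse) must combine so that only the transposition $(1,1)=\mathrm{Id}$ survives in the $\ell$th slot except for a cancellation that produces exactly the single coefficient $e_{\ell}^{(+)}$ times the rest. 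The cleanest way to organize this is to observe that $\Phi_{\ell m} = \widetilde{S}_{m-1}\times\cdots\times\widetilde{S}_\ell$ factors as $\Phi_{\ell+1,m}\times\widetilde{S}_\ell$, and the definition of $e_{\ell m}(w) = \left(\prod_{j=\ell}^{s-2}e_j^{(+)}(w)\right)e_{s-1,s}(w)$ is built precisely so that $e_{\ell m}(\sigma(v+z))$ for $\sigma\in\Phi_{\ell m}$ telescopes along this factorization; similarly $\varepsilon_{\ell m} = \delta^{\ell,1}+\cdots+\delta^{m-1,1}$ is the sum that tracks the shift accumulated step by step.

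Alternatively — and this is probably the more honest route given the phrase ``It is not difficult to prove'' in the excerpt — I would simply prove it by direct comparison: fix $\ell,m$, expand the classical iterated formula for $E_{\ell m}$ obtained by repeatedly applying Theorem~\ref{Gelfand-Tsetlin theorem} via the commutator presentation, and match summands. Each summand in the classical expansion is indexed by a choice, at each row $k$ between $\min(\ell,m)$ and $\max(\ell,m)-1$, of an index $i_k$ telling which $\delta^{k,i_k}$ was added; the corresponding permutation $\sigma\in\Phi_{\ell m}$ is $(1,i_k)$ in slot $k$, and the generic hypothesis guarantees no denominator vanishes so nothing is annihilated and the bijection between summands and $\Phi_{\ell m}$ is exact. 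The main obstacle I anticipate is purely notational rather than conceptual: correctly keeping track of how the transposition $(1,i)$ acting on $w$ relabels the products $\prod_{j\neq 1}^{k}(w_{k1}-w_{kj})$ into $\prod_{j\neq i}^{k}(w_{ki}-w_{kj})$, and verifying that the ``top-row'' entries $w_{n,\cdot}$ are never touched (consistent with $\sigma[n]=\mathrm{Id}$ for $\sigma\in\Phi_{\ell m}$ when $m\leq n$), so that the shift $\sigma(\varepsilon_{\ell m})$ lands in $\mathcal{M}$ and $T(v+z+\sigma(\varepsilon_{\ell m}))$ is a legitimate tableau in $V(T(v))$. Once the indexing conventions are pinned down, the identity is a formal consequence of Theorem~\ref{Gelfand-Tsetlin theorem} and the definition of $\Phi_{\ell m}$, $\varepsilon_{\ell m}$, and $e_{\ell m}$.
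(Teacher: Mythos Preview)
Your proposal is correct and is precisely the kind of direct verification the paper has in mind: the paper gives no proof at all, merely stating that ``it is not difficult to prove'' and referring to \cite{Maz2}, so your base-case check for $|\ell-m|\leq 1$ together with the commutator/induction (or equivalently the direct bijection between summands in the iterated Gelfand--Tsetlin expansion and elements $\sigma\in\Phi_{\ell m}$) is exactly what is required. One small clarification on your inductive step: it is not that ``only the transposition $(1,1)$ survives in the $\ell$th slot''---rather, for \emph{every} choice $(1,i_\ell)\in\widetilde{S}_\ell$ the difference of the two $E_{\ell,\ell+1}$-coefficients (with and without the row-$(\ell+1)$ shift coming from $E_{\ell+1,m}$) collapses to the factor $e_\ell^{(+)}(\sigma(w))$, which is what telescopes into $e_{\ell m}(\sigma(w))$; your ``alternative'' direct-matching description already gets this right.
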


\section{Derivative Tableaux} \label{sec-der}
In this section we consider tableaux $T(v)$ in $\mathbb{C}^{\frac{n(n+1)}{2}}$ with fixed first row $v_{nm}=a_m$, $m=1,...,n$. For this reason we will assume that the corresponding vectors $v$ are in $\mathbb{C}^{\frac{n(n-1)}{2}}$. The goal is to introduce a new notion of tableaux, called derivative tableaux, and to define a module structure on the space spanned by all regular and derivative tableaux.

\subsection{Singular tableaux}

\begin{definition}
A vector $v\in\mathbb{C}^{\frac{n(n-1)}{2}}$ will be called \emph{singular} if there exist $1\leq s<t\leq r\leq n-1$ such that $v_{rs}-v_{rt}\in\mathbb{Z}$. The vector $v$ will be called \emph{$m$-singular} if for $i=1,...,m$, there exist $r_{i},s_{i},t_{i}$ with $1\leq s_{i}<t_{i}\leq r_{i}\leq n-1$ such that $v_{r_{i}s_{i}}-v_{r_{i}t_{i}}\in\mathbb{Z}$ for all $i=1,\ldots,m$ and $v_{rs}-v_{rt}\notin\mathbb{Z}$ for all $(r,s,t) \neq (r_{i},s_{i},t_{i})$, $i=1,\ldots,m$. By defintion, every generic vector is $0$-singular.
\end{definition}

{\it From now on we fix $(i,j,k)$ such that  $1\leq i < j \leq k\leq n-1$}. By ${\mathcal H} = {\mathcal H}_{ij}^k$ we denote the hyperplane $v_{ki} - v_{kj} = 0$ in ${\mathbb C}^{\frac{n(n-1)}{2}}$ and let $\overline{\mathcal H} = \overline{\mathcal H}_{ij}^k$ be the subset of all $v$ in ${\mathbb C}^{\frac{n(n-1)}{2}}$ such that $v_{tr} \neq v_{ts}$ for all triples $(t,r,s)$ except for $(t,r,s) = (k,i,j)$. {\it From now on we fix $\bar{v}$ in ${\mathcal H}$ such that $\bar{v}_{ki} = \bar{v}_{kj} $ and all other differences $v_{mr} - v_{ms}$ are noninteger.}  In other words, $\bar{v} \in {\mathcal H}$ and $\bar{v} + {\mathbb Z}^{\frac{n(n-1)}{2}} \subset \overline{\mathcal H}$. In particular, $\bar{v}$ is a $1$-singular vector. A character $\chi$ and $\sn = \Ker \chi$ are called {\it 1-singular} if $\ell_{\sn}$ is $1$-singular for one choice (hence for all choices) of $\ell_{\sn}$. A Gelfand-Tsetlin module $M$ will be called {\it $1$-singular Gelfand-Tsetlin module} if $M(\sn) \neq 0$ for some $1$-singular $\sn \in \Specm \Gamma$.

 Our goal is to define a module $V(T(\bar{v}))$ whose support contains the set of tableaux $\left\{T(\bar{v}+ w)\; | \; w \in{\mathbb Z}^{\frac{n(n-1)}{2}}\right\}$. This module will be our {\it  universal $1$-singular Gelfand-Tsetlin module}. In order to define  $V(T(\bar{v}))$ we first introduce a family ${\mathcal V}_{\rm gen}$ of generic Gelfand-Tsetlin modules as follows.  
 
 Since for a generic $v$, $V(T(v)) = V(T(v'))$ whenever $v - v' \in {\mathbb Z}^{\frac{n(n-1)}{2}}$, we may define $V(T(v))$ for $v$ in the (generic) complex torus $T= {\mathbb C}^{\frac{n(n-1)}{2}}_{\rm gen} / {\mathbb Z}^{\frac{n(n-1)}{2}}$. We will take a direct sum of such generic $V(T(v))$ by choosing representatives $w+ {\mathbb Z}^{\frac{n(n-1)}{2}}$ of $w$ in $T$ as ``close'' as possible to $\bar{v}$ as follows.

 \begin{definition} \label{def-s}
For  $w \in {\mathbb C}^{\frac{n(n-1)}{2}}_{\rm gen}$, let $\lfloor Re(\bar{v} -w )\rfloor$ be the vector in ${\mathbb C}^{\frac{n(n-1)}{2}}$ whose $(r,s)$th component is $\lfloor Re(\bar{v}_{rs}-w_{rs})\rfloor$ (the integer part of the real part of $\bar{v}_{rs}-w_{rs}$).  We set
$$\mathcal{S}:=\left\{w+\lfloor Re(\bar{v} -w)\rfloor \; | \; w\in{\mathbb C}^{\frac{n(n-1)}{2}}_{\rm gen}\right\}.$$
\end{definition}

\begin{remark} The elements in $\mathcal S$ are as close as possible to $\bar{v}$ in the following sense. If $\bar{v}[w] :=w+\lfloor Re(\bar{v} -w)\rfloor$, then $\lfloor Re(\bar{v}_{rs}-\bar{v}[ w ]_{rs})\rfloor =0$ for any $r,s$. Actually, if $u \in w + {\mathbb Z}^{\frac{n(n-1)}{2}}$, and $\lfloor Re(v_{rs}-u_{rs})\rfloor =0$ for any $r,s$, then we have $u=\bar{v}[w ]$.
\end{remark}

Now define the generic family of modules ${\mathcal V}_{\rm gen} := \bigoplus_{v \in \mathcal{S}} V(T(v))$.  

Denote by ${\mathcal F}$ the space of rational functions on $v_{\ell m}$, $1\leq m \leq \ell \leq n$, with poles on the hyperplanes $v_{rs} - v_{rt} =0$, and by ${\mathcal F}_{ij}$  the subspace of ${\mathcal F}$ consisting of all functions that are smooth on $\overline{\mathcal H}$. Then ${\mathcal F} \otimes {\mathcal V}_{\rm gen}$  is a  $\mathfrak{gl} (n)$-module  with the  trivial action on ${\mathcal F}$.

 \subsection{The space of derivative tableaux}
{\it From now on by $\tau$ we denote  the element in $S_{n-1} \times\cdots \times S_{1}$ such that $\tau[k]$ is the transposition $(i,j)$  and all other $\tau[t]$ are $\mbox{Id}$.} In particular $w \in {\mathcal H}$ if and only if $\tau(w) = w$. 
 
Since $\bar{v}$ is a $1$-singular vector, the generators of $\Gamma$ acts in the same way on $T(\bar{v}+z)$ and $T(\bar{v}+\tau(z))$, with some abuse of notation we will write $T(\bar{v} + z) - T(\bar{v} +\tau(z)) = 0$. We first introduce formally new tableaux  ${\mathcal D}_{ij} T({\bar{v}} + z)$ for  every $z \in {\mathbb Z}^{\frac{n(n-1)}{2}}$ subject to the relations ${\mathcal D}_{ij} T({\bar{v}} + z) + {\mathcal D}_{ij} T({\bar{v}} + \tau(z)) = 0$ or, equivalently, ${\mathcal D}_{ij} T(u) + {\mathcal D}_{ij} T(\tau (u)) = 0$ for all $u$ in ${\bar{v}} +  {\mathbb Z}^{\frac{n(n-1)}{2}}$. We call ${\mathcal D}_{ij} T(u)$  {\it the derivative Gelfand-Tsetlin tableau} associated with $u$.  

Now, define  $V(T(\bar{v}))$ to be the space spanned by $\{ T(\bar{v} + z), \,\mathcal{D}_{ij} T(\bar{v} + z) \; | \; z \in {\mathbb Z}^{\frac{n(n-1)}{2}}\}$. Note that this set is not a basis since $T(\bar{v} + z) - T(\bar{v} +\tau(z)) = 0$, ${\mathcal D}_{ij} T(\bar{v} + z) + {\mathcal D}_{ij} T(\bar{v} +\tau(z)) = 0$. A basis of $V(T(\bar{v}))$ is for example the set
$$
\{ T(\bar{v} + z), \mathcal{D}_{ij} T(\bar{v} + w) \; | \; z_{ki} \leq z_{kj}, w_{ki} > w_{kj}\}.
$$

Set ${\mathcal V}' = V(T(\bar{v})) \oplus {\mathcal V}_{\rm gen}$.  Define {\it the evaluation map} $\mbox{ev}(\bar{v}) : {\mathcal F}_{ij} \otimes {\mathcal V}' \to  {\mathcal V}'  $, which is linear and 
$$
f T(v+z) \mapsto f(\bar{v}) T(\bar{v}+z), \, f {\mathcal D}_{ij} T(\bar{v}+z) \mapsto f(\bar{v}) {\mathcal D}_{ij} T(\bar{v}+z),$$  for $z \in {\mathbb Z}^{\frac{n(n-1)}{2}}$, $f\in {\mathcal F}_{ij}$ and $v \in {\mathcal S}$.


Finally, let $\mathcal{D}_{ij}^{\bar{v}}:  {\mathcal F}_{ij} \otimes V(T(v)) \to  V(T(\bar{v}))   $ be the linear map defined by 
$$
\mathcal{D}_{ij}^{\bar{v}} (f T(v+z)) = \mathcal{D}_{ij}^{\bar{v}} (f) T(\bar{v}+z) +   f(\bar{v}) \mathcal{D}_{ij} T(\bar{v}+z),
$$ 
where $\mathcal{D}_{ij}^{\bar{v}}(f) = \frac{1}{2}\left(\frac{\partial f}{\partial v_{ki}}-\frac{\partial f}{\partial v_{kj}}\right)(\bar{v})$. In other words, this is the map $\mathcal{D}_{ij}^{\bar{v}} \otimes \mbox{ev}(\bar{v})  + \mbox{ev}(\bar{v})  \otimes \mathcal{D}_{ij}^{\bar{v}}$. This map certainly extends to a linear map  ${\mathcal F}_{ij} \otimes {\mathcal V}_{\rm gen} \to V (T(\bar{v}))$ which we will also denote by $\mathcal{D}_{ij}^{\bar{v}}$. In particular, $\mathcal{D}_{ij}^{\bar{v}} (T(v+z)) = \mathcal{D}_{ij} (T(\bar{v}+z))$.

\begin{remark}
The operator $\mathcal{D}_{ij}^{\bar{v}}: {\mathcal V}_{\rm gen} \to V (T(\bar{v}))$  can be considered as a formal derivation. Namely, for $v \in {\mathbb C}^{\frac{n(n+1)}{2}}$ and $z \in {\mathbb Z}^{\frac{n(n-1)}{2}}$, ${\mathcal D}_{ij}^{\bar{v}} T(v + z)$ is the formal limit of  $\frac{T(v+z) - T(v+ \tau(z))}{v_{ki} - v_{kj}}$ when $v \to \bar{v}$.
\end{remark}

\subsection{Module structure on $V(T(\bar{v}))$} {\it From now on we set for convenience ${\mathcal D}^{\bar{v}} = {\mathcal D}_{ij}^{\bar{v}}$, ${\mathcal D}T(\bar{v}+z)={\mathcal D}_{ij}^{\bar{v}}(T(v+z)) = {\mathcal D}_{ij}T(\bar{v}+z)$ ,  $x = v_{ki}$, and $y = v_{kj}$.}

We define the action of $\mathfrak{gl}(n)$ on the generators of $V(T(\bar{v}))$ as follows:
\begin{align*}
E_{rs}(T(\bar{v} + z))=&\  \mathcal{D}^{\bar{v}}((x - y)E_{rs}(T(v + z)))\\
E_{rs}(\mathcal{D}T(\bar{v} + w)))=&\ \mathcal{D}^{\bar{v}} ( E_{rs}(T(v + w))),
\end{align*}
where $v$ is a generic vector, $z, w \in {\mathbb Z}^{\frac{n(n-1)}{2}}$ with $w \neq \tau(w)$. One should note that $(x - y)E_{mn}(T(v + z))$ and $E_{mn}(T(v + w))$ are in $ {\mathcal F}_{ij} \otimes V(T(v))$, so the right hand sides in the above formulas are well defined.

The following proposition is proved in \S \ref{proof-compatible}.
\begin{proposition} \label{compatible} Let $v$ be generic and $z \in {\mathbb Z}^{\frac{n(n-1)}{2}}$.
\begin{itemize}
\item[(i)] $ {\mathcal D}^{\bar{v}} ((x-y) E_{rs} T(v+z)) =  {\mathcal D}^{\bar{v}} ((x-y) E_{rs} T(v+\tau(z))) $ for all $z$.
\item[(ii)] $ {\mathcal D}^{\overline{v}}(E_{rs} T(v+z)) =  - {\mathcal D}^{\bar{v}} (E_{rs} T(v+\tau(z))) $ for all $z$ such that $\tau(z) \neq z$.
\end{itemize}
\end{proposition}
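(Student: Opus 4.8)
The plan is to reduce both identities to the behavior of the generic Gelfand-Tsetlin formulas under the transposition $\tau$, and then to analyze how the operator $\mathcal{D}^{\bar v}$ interacts with $\tau$-(anti)symmetric rational functions. First I would record the following basic fact: for a generic $v$ and any $z$, the tableau $T(v+z)$ and $T(\tau(v+z)) = T(\tau(v)+\tau(z))$ are \emph{different} tableaux in $\mathcal V_{\rm gen}$, but $T(v+z)$ and $T(v+\tau(z))$ become \emph{equal} after evaluation at $\bar v$ (since $\tau(\bar v) = \bar v$). The key algebraic input is that the Gelfand-Tsetlin formulas in Proposition~\ref{coefficients e_ij} are $\tau$-equivariant in the following sense: if $g$ is any element of $G$ commuting appropriately with the shifts $\Phi_{rs}$, then $g\big(E_{rs}(T(v+z))\big)$, interpreted as an element of $\mathcal F \otimes \mathcal V_{\rm gen}$, equals $E_{rs}(T(g(v+z)))$ up to relabeling the summation index $\sigma \mapsto g\sigma g^{-1}$. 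Applying this with $g = \tau$ gives an identity of the form
\begin{equation*}
\tau\big(E_{rs}(T(v+z))\big) = E_{rs}\big(T(\tau(v) + \tau(z))\big),
\end{equation*}
which I would prove by a direct bijective rewriting of the sum over $\sigma \in \Phi_{rs}$ using $\tau$ and the explicit shape of $\varepsilon_{rs}$ and the coefficients $e_{rs}$.

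Second, I would isolate a lemma about $\mathcal{D}^{\bar v}$: if $h \in \mathcal F_{ij}\otimes V(T(v))$ is written in the basis $\{T(v+z)\}$ with coefficient functions in $\mathcal F_{ij}$, and if $h$ is \emph{$\tau$-symmetric} in the precise sense that $\tau(h) = h$ as a formal combination (where $\tau$ acts simultaneously on the function factor by swapping $x = v_{ki}$ and $y = v_{kj}$ and on the tableau index by $z \mapsto \tau(z)$), then $\mathcal{D}^{\bar v}(h) = \mathcal{D}^{\bar v}(\tau(h))$ automatically, because $\mathcal{D}^{\bar v}$ is the antisymmetrized derivative $\tfrac12(\partial_x - \partial_y)$ evaluated at the $\tau$-fixed point $\bar v$, and applying $\tau$ to a product rule expansion flips the sign of the derivative part but also swaps $T(\bar v + z) \leftrightarrow T(\bar v + \tau(z))$ and $\mathcal D_{ij}T(\bar v+z) \mapsto \mathcal D_{ij}T(\bar v + \tau(z)) = -\mathcal D_{ij}T(\bar v + z)$; a careful bookkeeping shows the two sign flips cancel. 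Dually, if $h$ is \emph{$\tau$-antisymmetric}, $\tau(h) = -h$, then $\mathcal{D}^{\bar v}(h) = -\mathcal{D}^{\bar v}(\tau(h))$.

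Third, I would combine these. For part (i), I need $(x-y)E_{rs}T(v+z)$ to be $\tau$-symmetric as an element of $\mathcal F_{ij}\otimes V(T(v))$ after the appropriate identification; the factor $(x-y)$ is $\tau$-antisymmetric, so it suffices to show $E_{rs}T(v+z)$ transforms to $E_{rs}T(v+\tau(z))$ with a sign $-1$ under the simultaneous $\tau$-action — equivalently, that $\tau\big(E_{rs}T(v+z)\big)$, after reinterpreting $\tau(v)$ back as $v$ (legitimate since both lie in the same generic coset and the coefficients are the same rational functions), equals $-E_{rs}T(v+\tau(z))$ modulo the ideal generated by $x-y$; but actually the cleanest route is: $(x-y)E_{rs}T(v+z)$ and $(x-y)E_{rs}T(v+\tau(z))$ have, by the $\tau$-equivariance identity above, the same image under $\mathcal D^{\bar v}$ because $\mathcal D^{\bar v}$ kills the antisymmetric-times-antisymmetric cross terms — I would make this precise by writing $(x-y)E_{rs}T(v+\tau(z)) = \tau\big((x-y)E_{rs}T(v+z)\big)$ up to terms vanishing to order $\geq 2$ on $\mathcal H$, and invoking the symmetry lemma. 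Part (ii) is the same computation without the $(x-y)$ factor: $E_{rs}T(v+z)$ is $\tau$-antisymmetric in the relevant sense (this is really the content of the sign in the derivative-tableau relation $\mathcal D_{ij}T(u) + \mathcal D_{ij}T(\tau(u)) = 0$), so $\mathcal D^{\bar v}(E_{rs}T(v+z)) = -\mathcal D^{\bar v}(E_{rs}T(v+\tau(z)))$.

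I expect the main obstacle to be the bookkeeping in the $\tau$-equivariance of the Gelfand-Tsetlin sum: one must check that conjugating the summation index $\sigma \in \Phi_{rs}$ by $\tau$ genuinely permutes $\Phi_{rs}$, that $\sigma(\varepsilon_{rs})$ transforms correctly (this uses that $\varepsilon_{rs}$ only involves the first-column entries $\delta^{t,1}$ while $\tau[k] = (i,j)$ with $i < j$, so the interaction is controlled), and that the coefficient $e_{rs}(\sigma(v+z))$ picks up exactly the sign or permutation one predicts — all of this while tracking which summands are ``currently'' singular near $\bar v$ and verifying that the would-be poles in $(x-y)$ are exactly cancelled before applying $\mathcal D^{\bar v}$, so that $(x-y)E_{rs}T(v+z)$ really does land in $\mathcal F_{ij}\otimes V(T(v))$ as asserted. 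Once the equivariance identity is in hand, both (i) and (ii) follow formally from the symmetry/antisymmetry lemma for $\mathcal D^{\bar v}$.
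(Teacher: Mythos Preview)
Your overall strategy --- establish a $\tau$-equivariance of the Gelfand--Tsetlin expansion and then pull it through $\mathcal{D}^{\bar v}$ --- is correct and is essentially a repackaging of the paper's argument. The paper expands $\mathcal{D}^{\bar v}\bigl((x-y)E_{rs}T(v+z)\bigr)$ by the product rule and matches terms with the $\tau(z)$ side via the bijection on $\Phi_{rs}$ given by $\sigma\mapsto\sigma$ (for $\sigma\notin\Phi_{rs}(i)\cup\Phi_{rs}(j)$) or $\sigma\mapsto\tau\star\sigma$ (otherwise), invoking Lemma~\ref{lem-symm} for the coefficient identities; you propose to do the matching \emph{before} applying $\mathcal{D}^{\bar v}$, which works just as well.

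However, both of your key lemmas are misstated. Your ``symmetry lemma'' as written is a tautology; what you actually need (and what your bookkeeping sketch almost shows) is the identity $\mathcal{D}^{\bar v}\circ\tau=-\mathcal{D}^{\bar v}$ on all of $\mathcal{F}_{ij}\otimes V(T(v))$: under $\tau$ the derivative-of-coefficient term and the derivative-tableau term \emph{each} acquire a sign, so the overall sign is $-1$, not $+1$ (the signs do not ``cancel''). Second, the equivariance identity carries no extra sign: using the bijection $\sigma\mapsto\sigma'$ determined by $\sigma'[k](1)=\tau[k](\sigma[k](1))$ together with the symmetry of $e_{rs}(u)$ in $\{u_{km}:m\neq1\}$, one checks directly that $\tau\cdot\bigl(E_{rs}T(v+z)\bigr)=E_{rs}T(v+\tau(z))$, with no minus. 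With these two corrections in hand, part~(ii) is immediate from $\mathcal{D}^{\bar v}\circ\tau=-\mathcal{D}^{\bar v}$, and part~(i) follows since $(x-y)^\tau=-(x-y)$. Finally, the bijection $\sigma\mapsto\sigma'$ is conjugation by $\tau$ only when $1\notin\{i,j\}$; when $i=1$ it is left multiplication by $\tau$ on $\Phi_{rs}(i)\cup\Phi_{rs}(j)$ --- exactly the paper's operation $\tau\star\sigma$ --- so the obstacle you flagged is genuine and must be handled case by case.
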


Now, with the aid of Proposition \ref{compatible} we define $E_{rs} F$ for any $F \in  {\mathcal F}_{ij} \otimes  V(T(\bar{v}))$. Then, using linearity we define $g F$ for any $g \in \mathfrak{gl} (n)$ and $F \in {\mathcal F}_{ij} \otimes  {\mathcal V}'$. It remains to prove that this well-defined action endows $ V(T(\bar{v}))$, and hence $ {\mathcal F}_{ij} \otimes  {\mathcal V}'$, with a $\mathfrak{gl} (n)$-module structure.

\begin{lemma} \label{dij-commute} Let $g \in \mathfrak{gl}(n)$.
\begin{itemize} 
\item[(i)] $g (T(\overline{v} + z)) = {\rm ev} ( \overline{v}) g ( T(v+z))$ whenever $\tau(z) \neq z$. 
\item[(ii)] $\mathcal{D}^{\overline{v}} g (F) = g \mathcal{D}^{\overline{v}} (F)$ if $F$ and $g(F)$ are in ${\mathcal F}_{ij} \otimes {\mathcal V}_{\rm gen}$.
\item[(iii)] $\mathcal{D}^{\overline{v}} ( (x-y)g (F)) = g( {\rm ev} (\overline{v}) F)$ if $F \in {\mathcal F}_{ij} \otimes {\mathcal V}_{\rm gen}$.
\end{itemize}
\end{lemma}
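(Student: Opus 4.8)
The plan is to verify the three identities by unwinding the definitions of $\mathrm{ev}(\bar v)$ and $\mathcal D^{\bar v}$ as the operators $\mathrm{ev}(\bar v)$ and $\mathcal D^{\bar v}\otimes\mathrm{ev}(\bar v)+\mathrm{ev}(\bar v)\otimes\mathcal D^{\bar v}$ on $\mathcal F_{ij}\otimes\mathcal V_{\mathrm{gen}}$, together with the Leibniz rule that $\mathcal D^{\bar v}_{ij}$ satisfies on products of functions in $\mathcal F_{ij}$. Since all three statements are linear in $F$ and in $g$, it suffices to treat $g=E_{rs}$ and $F=fT(v+z)$ for a generic vector $v\in\mathcal S$, $f\in\mathcal F_{ij}$ and $z\in\mathbb Z^{\frac{n(n-1)}{2}}$. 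Recall from Proposition~\ref{coefficients e_ij} that $E_{rs}T(v+z)=\sum_{\sigma\in\Phi_{rs}}e_{rs}(\sigma(v+z))\,T(v+z+\sigma(\varepsilon_{rs}))$; the point to keep track of is which of the coefficient functions $e_{rs}(\sigma(v+z))$ are already smooth on $\overline{\mathcal H}$ (so lie in $\mathcal F_{ij}$) and which acquire a simple pole on $\mathcal H$, the latter being cancelled after multiplication by $(x-y)$.

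For part~(i), I would start from the \emph{definition} $E_{rs}(T(\bar v+z))=\mathcal D^{\bar v}((x-y)E_{rs}(T(v+z)))$, and rewrite the right-hand side: by Proposition~\ref{compatible}(i) the value $\mathcal D^{\bar v}((x-y)E_{rs}T(v+z))$ is $\tau$-invariant, and expanding $\mathcal D^{\bar v}$ via the Leibniz rule on the product $(x-y)\cdot\big(e_{rs}(\sigma(v+z))\big)$ against $T(v+z+\sigma(\varepsilon_{rs}))$ gives a sum of terms each of the form $(\text{function at }\bar v)\cdot T(\bar v+\cdots)$ plus $(\text{function at }\bar v)\cdot\mathcal D T(\bar v+\cdots)$. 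The same Leibniz expansion of $\mathrm{ev}(\bar v)\big(E_{rs}(T(v+z))\big)$ — using that $(x-y)$ vanishes at $\bar v$ so only the term where $\mathcal D^{\bar v}$ falls on $(x-y)$ survives, contributing $(x-y)$'s ``derivative'' $\tfrac12(\partial_{v_{ki}}-\partial_{v_{kj}})(x-y)(\bar v)=1$ times $e_{rs}(\sigma(\bar v))\,T(\bar v+\cdots)$ — reproduces exactly the $T$-part, while the genuinely singular coefficients are the ones whose $(x-y)$-multiple is finite and whose $\mathcal D^{\bar v}$-image lands on a $\mathcal D T$-term. Matching these two expansions term by term is the content of~(i). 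Part~(ii) is the identity $\mathcal D^{\bar v}g(F)=g\mathcal D^{\bar v}(F)$ \emph{inside} $\mathcal F_{ij}\otimes\mathcal V_{\mathrm{gen}}$, i.e.\ before any evaluation, where $g$ acts by the honest Gelfand–Tsetlin formulas with rational-function coefficients; here $\mathcal D^{\bar v}$ is a first-order differential operator composed with evaluation, so the claim is just that $E_{rs}$ (a $\Gamma$-linear-combination shift operator with coefficients in $\mathcal F_{ij}$ once we are on $\overline{\mathcal H}$) commutes with the derivation $\tfrac12(\partial_{v_{ki}}-\partial_{v_{kj}})$ up to the evaluation — which follows from the product rule applied to each summand $e_{rs}(\sigma(v+z))\,T(v+z+\sigma(\varepsilon_{rs}))$, noting that $\sigma(\varepsilon_{rs})$ is $\tau$-invariant for $\sigma\in\Phi_{rs}$ precisely when the shifted coefficient stays regular, and that the shift of the differentiation variables $x,y$ is compatible with $\tau$. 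Finally~(iii) combines the two: $\mathcal D^{\bar v}((x-y)g(F))$ expands by Leibniz into ``$\mathcal D^{\bar v}(x-y)$ applied, i.e.\ $g(\mathrm{ev}(\bar v)F)$'' plus ``$(x-y)|_{\bar v}\cdot\mathcal D^{\bar v}g(F)=0$''; here one also uses that $(x-y)g(F)\in\mathcal F_{ij}\otimes\mathcal V_{\mathrm{gen}}$ (the multiplication by $(x-y)$ clears exactly the poles introduced by $g$), which is the observation already made in the text just before Proposition~\ref{compatible}.

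The main obstacle I anticipate is bookkeeping: one must carefully separate, for each $\sigma\in\Phi_{rs}$, the ``diagonal'' shifts $\sigma(\varepsilon_{rs})$ that are $\tau$-fixed (equivalently, that do not move the pair $(k,i),(k,j)$) from those that are not, because only for the latter does $(x-y)E_{rs}T(v+z)$ genuinely need the factor $(x-y)$ and only there does $\mathcal D^{\bar v}$ produce a derivative tableau rather than an ordinary one; getting the signs right in the $\mathcal D T(\bar v+\tau(z))=-\mathcal D T(\bar v+z)$ relation, and checking that $\tau$-invariance/anti-invariance is preserved under the action, is where the argument is delicate rather than deep. I would organize the computation by first recording, as a preliminary lemma, the Leibniz formula $\mathcal D^{\bar v}(fg\,T(v+z))=\mathcal D^{\bar v}(f)\,g(\bar v)\,T(\bar v+z)+f(\bar v)\,\mathcal D^{\bar v}(g)\,T(\bar v+z)+f(\bar v)g(\bar v)\,\mathcal D T(\bar v+z)$ for $f,g\in\mathcal F_{ij}$, and the special values $\mathcal D^{\bar v}(x-y)=1$, $\mathrm{ev}(\bar v)(x-y)=0$; after that, (i)–(iii) all reduce to substituting $f=(x-y)$ or $f=1$ and simplifying, with Proposition~\ref{compatible} guaranteeing the results are well-defined (independent of the $\tau$-orbit representative).
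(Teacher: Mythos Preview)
Your plan has the right shape (Leibniz rule, definitions of $\mathcal D^{\bar v}$ and $\mathrm{ev}(\bar v)$, reduce to $g=E_{rs}$ and $F=fT(v+z)$), but you are missing the one observation that makes the paper's proof short, and this causes genuine trouble in both (i) and (iii).

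For (i), the hypothesis $\tau(z)\neq z$ means $z_{ki}\neq z_{kj}$, and therefore \emph{every} coefficient $e_{rs}(\sigma(v+z))$ lies in $\mathcal F_{ij}$ (the only possible bad factor in the denominator is $(v_{ki}-v_{kj})+(z_{ki}-z_{kj})$, which evaluates to $z_{ki}-z_{kj}\neq 0$ at $\bar v$). There are no ``genuinely singular coefficients'' to separate off. Once you know this, $\mathcal D^{\bar v}\big((x-y)e_{rs}(\sigma(v+z))T(v+z+\sigma(\varepsilon_{rs}))\big)$ has its $\mathcal DT$-term equal to $\mathrm{ev}(\bar v)\big((x-y)e_{rs}(\sigma(v+z))\big)\mathcal DT(\cdots)=0$, and the $T$-term is exactly $e_{rs}(\sigma(\bar v+z))T(\bar v+z+\sigma(\varepsilon_{rs}))$; summing over $\sigma$ gives $\mathrm{ev}(\bar v)E_{rs}T(v+z)$ on the nose. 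Your invocation of Proposition~\ref{compatible}(i) is not needed here, and your discussion of which $\sigma(\varepsilon_{rs})$ are $\tau$-fixed is solving a problem that does not arise under the hypothesis $\tau(z)\neq z$.

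For (iii), your Leibniz split $(x-y)\cdot g(F)$ does not work as written: when $\tau(z)=z$ the element $g(F)=fE_{rs}T(v+z)$ need not lie in $\mathcal F_{ij}\otimes\mathcal V_{\rm gen}$, so neither $\mathrm{ev}(\bar v)(g(F))$ nor $\mathcal D^{\bar v}(g(F))$ is defined, and even when it is, you still owe the identification $\mathrm{ev}(\bar v)(g(F))=g(\mathrm{ev}(\bar v)F)$, which is exactly (i) and fails for $\tau(z)=z$. The paper instead groups the factors as $f\cdot\big((x-y)E_{rs}T(v+z)\big)$: now the second factor \emph{is} in $\mathcal F_{ij}\otimes\mathcal V_{\rm gen}$, one checks $\mathrm{ev}(\bar v)\big((x-y)E_{rs}T(v+z)\big)=0$ (this needs a short pairing argument via $\sigma\leftrightarrow\tau\star\sigma$ when $\tau(z)=z$), so $\mathcal D^{\bar v}(f\cdot G)=f(\bar v)\mathcal D^{\bar v}(G)$, and the right-hand side is $f(\bar v)E_{rs}(T(\bar v+z))=E_{rs}(\mathrm{ev}(\bar v)F)$ by definition.

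For (ii), once (i) is in hand the paper's argument is a two-line computation: expand both $\mathcal D^{\bar v}E_{rs}(fT(v+z))$ and $E_{rs}\mathcal D^{\bar v}(fT(v+z))$ using the definition $E_{rs}(\mathcal DT(\bar v+z))=\mathcal D^{\bar v}(E_{rs}T(v+z))$, and observe that their difference is $\mathcal D^{\bar v}(f)\big(\mathrm{ev}(\bar v)E_{rs}T(v+z)-E_{rs}T(\bar v+z)\big)$, which vanishes by (i). Your description in terms of ``commuting with the derivation'' obscures the fact that (ii) is essentially a corollary of (i) plus the very definition of the $E_{rs}$-action on derivative tableaux.
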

\begin{proof}
Since $\mathcal{D}^{\overline{v}}$ is linear, it is enough to show each of the statements for $g = E_{rs}$ and $F = f T(v+z)$ with  generic $v$ and $f \in {\mathcal F}_{ij}$.  

(i) Since $\tau(z) \neq z$, $e_{rs} (\sigma (v+z)) \in {\mathcal F}_{ij}$ for all $\sigma \in \Phi_{rs}$. Thus 
\begin{eqnarray*}
E_{rs} ( T(\overline{v}+z) )& = & \mathcal{D}^{\bar{v}}((x - y)E_{rs}(T(v + z))) \\ 
& = & \mathcal{D}^{\bar{v}} \left( \sum_{\sigma \in \Phi_{rs}} (x-y)e_{rs} (\sigma (v+z)) T(v + z+\sigma (\varepsilon_{rs}))  \right) \\
& = &  \sum_{\sigma \in \Phi_{rs}}  {\rm ev} (\bar{v}) \left ( e_{rs} (\sigma (v+z))  T(v + z+\sigma (\varepsilon_{rs}))  \right) \\
& = & \mbox{ev} ( \overline{v}) E_{rs} ( T(v+z)). 
\end{eqnarray*}

(ii) Using (i) and the facts that $E_{rs} ( T(v+z)) $ is in  ${\mathcal F}_{ij} \otimes {\mathcal V}$ and $E_{rs}(\mathcal{D}T(\bar{v} + w))=\mathcal{D}^{\overline{v}}( E_{rs} ( T(v + w)))$ we have  
$$
\mathcal{D}^{\overline{v}} E_{rs} ( f T(v+z)) - E_{rs} \mathcal{D}^{\overline{v}} ( f T(v+z)) = \mathcal{D}^{\overline{v}} (f) \left( \mbox{ev} (\overline{v}) E_{rs} ( T(v+z))  -  E_{rs} ( T(\overline{v}+z))  \right) =0.
$$

(iii) Taking into consideration that $ \mbox{ev}(\overline{v}) \left(  (x-y)E_{rs}(T(v + z))\right) = 0$, we have
\begin{eqnarray*}
 \mathcal{D}^{\overline{v}} \left( (x-y)E_{rs}(fT(v + z)) \right) & = & f(\overline{v}) \mathcal{D}^{\overline{v}} \left( (x-y)E_{rs}(T(v + z)) \right)\\ 
& = & E_{rs} ( \mbox{ev}(\overline{v}) f T(v+z)).
\end{eqnarray*}

\end{proof}

\begin{proposition} \label{t-v-rep}
Let $g_1,g_2 \in \mathfrak{gl} (n)$. Then
$$
[g_1,g_2] (T(\overline{v} + z)) = g_1(g_2 (T(\overline{v} + z))) - g_2(g_1 (T(\overline{v} + z))).
$$
\end{proposition}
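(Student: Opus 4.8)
The plan is to transport the commutator relation from a generic module $V(T(v))$ — where it holds by Theorem~\ref{Generic Gelfand-Tsetlin modules} — down to $V(T(\bar v))$ along the map $\mathcal{D}^{\bar v}$, with Lemma~\ref{dij-commute} as the main tool; by bilinearity I may take $g_1=E_{r_1s_1}$, $g_2=E_{r_2s_2}$. First I would note that the linearly extended action still obeys $g(T(\bar v+z))=\mathcal{D}^{\bar v}((x-y)\,g(T(v+z)))$ for every $g\in\mathfrak{gl}(n)$, and apply this to $g=[g_1,g_2]$: although $g_1(g_2(T(v+z)))$ and $g_2(g_1(T(v+z)))$ may each acquire a pole of order two along $\mathcal H$, their difference $[g_1,g_2](T(v+z))$ is the action of an element of $\mathfrak{gl}(n)$, hence has at most a simple pole there, so $(x-y)[g_1,g_2](T(v+z))\in\mathcal{F}_{ij}\otimes V(T(v))$ and $\mathcal{D}^{\bar v}$ of it is defined. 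Since $V(T(v))$ is a module, $[g_1,g_2](T(v+z))=g_1(g_2(T(v+z)))-g_2(g_1(T(v+z)))$. Setting $F_2:=(x-y)\,g_2(T(v+z))$ and $F_1:=(x-y)\,g_1(T(v+z))$ — both in $\mathcal{F}_{ij}\otimes V(T(v))$ as already observed — Lemma~\ref{dij-commute}(iii) with $F=T(v+z)$ gives $g_j(T(\bar v+z))=\mathcal{D}^{\bar v}(F_j)$, while $g_1(F_2)-g_2(F_1)=(x-y)\bigl(g_1(g_2(T(v+z)))-g_2(g_1(T(v+z)))\bigr)$ because $g_j$ acts $\mathcal{F}_{ij}$-linearly. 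Hence the proposition reduces to the identity
\[
g_1\bigl(\mathcal{D}^{\bar v}(F_2)\bigr)-g_2\bigl(\mathcal{D}^{\bar v}(F_1)\bigr)=\mathcal{D}^{\bar v}\bigl(g_1(F_2)-g_2(F_1)\bigr).
\]

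If $g_1(F_2)$ and $g_2(F_1)$ lay individually in $\mathcal{F}_{ij}\otimes\mathcal{V}_{\rm gen}$, this would follow at once from Lemma~\ref{dij-commute}(ii) applied to each term; the difficulty is that each carries a simple pole along $\mathcal H$, and only their difference is smooth on $\overline{\mathcal H}$. So the next step is to establish a difference version of Lemma~\ref{dij-commute}(ii): for $F,F'\in\mathcal{F}_{ij}\otimes\mathcal{V}_{\rm gen}$ with $g_1(F)-g_2(F')\in\mathcal{F}_{ij}\otimes\mathcal{V}_{\rm gen}$ one has $g_1(\mathcal{D}^{\bar v}(F))-g_2(\mathcal{D}^{\bar v}(F'))=\mathcal{D}^{\bar v}(g_1(F)-g_2(F'))$. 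I would prove it by the same expansion used for Lemma~\ref{dij-commute}(ii): reduce to $F=f\,T(v+z)$, split $g_1(F)$ (and $g_2(F')$) into its part smooth on $\overline{\mathcal H}$ and its residual part $\tfrac{1}{x-y}(\cdots)$, observe that the residual parts must coincide since the difference is smooth, and check that $g_j\circ\mathcal{D}^{\bar v}$ applied to the generating pieces $f\,T(v+z)$ produces $\mathcal{D}^{\bar v}$ of the smooth part together with an extra contribution depending only on the residual part of $g_j(F)$ — so these extra contributions match between the two terms and cancel. Applying this with $F=F_2$, $F'=F_1$ then yields the displayed identity and finishes the argument.

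I expect the bookkeeping of the residual (simple-pole) contributions in this difference lemma to be the main obstacle, as it amounts to checking that the two defining prescriptions $E_{rs}(T(\bar v+z))=\mathcal{D}^{\bar v}((x-y)E_{rs}(T(v+z)))$ and $E_{rs}(\mathcal{D}T(\bar v+w))=\mathcal{D}^{\bar v}(E_{rs}(T(v+w)))$ fit together coherently under composition — which is exactly where the hypotheses $\bar v\in\mathcal H$, $\tau(\bar v)=\bar v$ and the relations $T(\bar v+z)=T(\bar v+\tau(z))$, $\mathcal{D}T(\bar v+z)=-\mathcal{D}T(\bar v+\tau(z))$ get used. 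An alternative, arguably cleaner route is a limiting argument: for generic $v$ running along a line through $\bar v$ on which $v_{ki}-v_{kj}=\epsilon$ while $v_{ki}+v_{kj}$ and all other coordinates are held fixed, the operators $g_1,g_2$ act on $V(T(v))$ by matrices that become holomorphic at $\epsilon=0$ once one passes to the basis adapted to $\mathrm{ev}(\bar v)$ and $\mathcal{D}^{\bar v}$ (the regularity being essentially Proposition~\ref{compatible}, and the value at $\epsilon=0$ reproducing the action on $V(T(\bar v))$ by construction); the commutator relation holds for every $\epsilon\neq 0$ because $V(T(v))$ is a module, and passes to $\epsilon=0$ by continuity, giving the statement on $T(\bar v+z)$ — and simultaneously on $\mathcal{D}T(\bar v+z)$.
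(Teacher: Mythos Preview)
Your reduction is exactly the paper's: set $F_t=(x-y)g_t(T(v+z))\in\mathcal F_{ij}\otimes V(T(v))$ and reduce to $g_1\mathcal D^{\bar v}(F_2)-g_2\mathcal D^{\bar v}(F_1)=\mathcal D^{\bar v}(g_1F_2-g_2F_1)$. But the obstacle you flag --- that $g_1(F_2)$ may itself carry a simple pole on $\overline{\mathcal H}$, forcing a ``difference version'' of Lemma~\ref{dij-commute}(ii) --- does not actually occur. Expand
\[
g_1(F_2)=\sum_{\sigma_1,\sigma_2}(x-y)\,e_{\ell m}(\sigma_2(v+z))\,e_{rs}\bigl(\sigma_1(v+z+\sigma_2(\varepsilon_{\ell m}))\bigr)\,T(\cdots).
\]
A double pole in $e_{\ell m}(\sigma_2)\,e_{rs}(\sigma_1,\sigma_2)$ would require simple poles in \emph{both} factors, hence (by Lemma~\ref{poles of order at most 1}) $\tau(z)=z$ with $\sigma_2[k]\in\{(1,i),(1,j)\}$ \emph{and} $\tau(z+\sigma_2(\varepsilon_{\ell m}))=z+\sigma_2(\varepsilon_{\ell m})$. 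The first two conditions force the $(ki)$ and $(kj)$ entries of $z+\sigma_2(\varepsilon_{\ell m})$ to differ by $\pm 1$, contradicting the third. Thus $g_1g_2T(v+z)$ has at most simple poles, so $g_1(F_2)\in\mathcal F_{ij}\otimes\mathcal V_{\rm gen}$, and Lemma~\ref{dij-commute}(ii) applies as stated.

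There is one honest wrinkle, but it is lighter than your residual-matching scheme: Lemma~\ref{dij-commute}(ii) is proved summand by summand for $F=f\,T(v+z')$ under the tacit assumption $\tau(z')\neq z'$. In $F_2$ some summands $f_\sigma T(v+z_\sigma)$ may have $\tau(z_\sigma)=z_\sigma$; but then the analysis above shows $e_{\ell m}(\sigma(v+z))$ is smooth, so $f_\sigma=(x-y)e_{\ell m}(\sigma(v+z))$ acquires an \emph{extra} factor of $(x-y)$, and Lemma~\ref{dij-commute}(iii) (rather than (ii)) gives $\mathcal D^{\bar v}(g_1(f_\sigma T(v+z_\sigma)))=g_1(\mathrm{ev}(\bar v)(f_\sigma/(x-y))T(v+z_\sigma))=g_1\mathcal D^{\bar v}(f_\sigma T(v+z_\sigma))$. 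So the paper's chain of equalities is justified term by term from Lemma~\ref{dij-commute}(ii) and (iii), and neither your difference lemma nor the limiting argument is needed.
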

\begin{proof}
We have that $g_t T(\overline{v} + z) =  \mathcal{D}^{\overline{v}} (G_t)$ where  $G_t = (x-y) g_t (T(v + z))$ is in ${\mathcal F}_{ij} \otimes V(T(v))$ for $t=1,2$. Hence 
\begin{eqnarray*}
 g_1(g_2 (T(\overline{v} + z))) - g_2(g_1 (T(\overline{v} + z))) &=&  g_1\mathcal{D}^{\overline{v}} (G_2) -  g_2 \mathcal{D}^{\overline{v}} (G_1)\\
 & = &  \mathcal{D}^{\overline{v}} (g_1 G_2) -  \mathcal{D}^{\overline{v}} (g_2 G_1)\\
& = &  \mathcal{D}^{\overline{v}} \left( (x-y) (g_1 g_2  - g_2 g_1) T(v+z) \right)\\
& = &  (g_1 g_2  - g_2 g_1) T(\overline{v}+z)
\end{eqnarray*}
 by  Lemma \ref{dij-commute}(i) and definitions.
\end{proof}

The following proposition is proved in \S \ref{proof-d-t}.
\begin{proposition} \label{d-t-v-rep}
Let $g_1,g_2 \in \mathfrak{gl} (n)$ and $\tau(z) \neq z$. Then
\begin{equation}\label{representation for derivative tableaux}
[g_1,g_2] (\mathcal{D}T(\bar{v} + z)) = g_1(g_2 (\mathcal{D}T(\bar{v} + z))) - g_2(g_1 (\mathcal{D}T(\bar{v} + z))).
\end{equation}
\end{proposition}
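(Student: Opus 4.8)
The plan is to reduce to matrix units and then transport the defining identity of the generic module — namely $[g_1,g_2]T(v+z) = g_1(g_2 T(v+z)) - g_2(g_1 T(v+z))$, which holds in $V(T(v))$ for generic $v$ — through the operator $\mathcal{D}^{\bar{v}}$, in the spirit of the proof of Proposition \ref{t-v-rep}. By bilinearity of the bracket and linearity of the action on $V(T(\bar{v}))$ it suffices to treat $g_1 = E_{r_1 s_1}$ and $g_2 = E_{r_2 s_2}$. First I would record that, since $\tau(z)\neq z$, the coefficients $e_{r_t s_t}(\sigma(v+z))$ ($\sigma\in\Phi_{r_t s_t}$) lie in ${\mathcal F}_{ij}$, so $g_t T(v+z)\in{\mathcal F}_{ij}\otimes{\mathcal V}_{\mathrm{gen}}$ and $g_t(\mathcal{D}T(\bar{v}+z)) = \mathcal{D}^{\bar{v}}(g_t T(v+z))$ by definition. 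The same applies to each $E_{pq}T(v+z)$, so expanding $[g_1,g_2] = \sum_{p,q}c_{pq}E_{pq}$ in matrix units gives $[g_1,g_2](\mathcal{D}T(\bar{v}+z)) = \mathcal{D}^{\bar{v}}([g_1,g_2]T(v+z)) = \mathcal{D}^{\bar{v}}(g_1 g_2 T(v+z) - g_2 g_1 T(v+z))$, where the difference $g_1 g_2 T(v+z) - g_2 g_1 T(v+z) = \sum_{p,q}c_{pq}E_{pq}T(v+z)$ lies in ${\mathcal F}_{ij}\otimes{\mathcal V}_{\mathrm{gen}}$ even though the two summands individually need not. Hence the proposition is equivalent to the commutation identity
\[
g_1\mathcal{D}^{\bar{v}}(g_2 T(v+z)) - g_2\mathcal{D}^{\bar{v}}(g_1 T(v+z)) = \mathcal{D}^{\bar{v}}(g_1 g_2 T(v+z) - g_2 g_1 T(v+z)).
\]

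If $g_1(g_2 T(v+z))$ itself lay in ${\mathcal F}_{ij}\otimes{\mathcal V}_{\mathrm{gen}}$, this would follow at once from Lemma \ref{dij-commute}(ii) applied termwise; in general it does not. Writing $g_2 T(v+z) = \sum_{\sigma}e_{r_2 s_2}(\sigma(v+z))\,T(v+z')$ with $z' = z+\sigma(\varepsilon_{r_2 s_2})$, a shifted vector $z'$ may satisfy $\tau(z') = z'$, i.e.\ lie on ${\mathcal H}$, and then a further application of $g_1$ produces coefficients with a simple pole along $x-y = 0$. I would therefore split $g_2 T(v+z) = H^{\mathrm{reg}} + H^{\mathrm{sing}}$ according to whether $\tau(z')\neq z'$ or $\tau(z') = z'$. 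On $H^{\mathrm{reg}}$ everything remains smooth on $\overline{\mathcal H}$, so $g_1\mathcal{D}^{\bar{v}}(H^{\mathrm{reg}}) = \mathcal{D}^{\bar{v}}(g_1 H^{\mathrm{reg}})$ follows directly from Lemma \ref{dij-commute}(ii). For a summand $e_{r_2 s_2}(\sigma(v+z))\,T(v+z')$ of $H^{\mathrm{sing}}$ one has $\mathcal{D}T(\bar{v}+z') = 0$, so $\mathcal{D}^{\bar{v}}$ of it is just $\mathcal{D}^{\bar{v}}(e_{r_2 s_2}(\sigma(v+z)))\,T(\bar{v}+z')$, and then $g_1$ is applied to the ordinary tableau $T(\bar{v}+z')$ via $g_1 T(\bar{v}+z') = \mathcal{D}^{\bar{v}}((x-y)g_1 T(v+z'))$ (legitimate for all $z'$ by Proposition \ref{compatible}), the factor $x-y$ being precisely what absorbs the simple pole of $g_1 T(v+z')$ and returns $(x-y)g_1 T(v+z')$ to ${\mathcal F}_{ij}\otimes{\mathcal V}_{\mathrm{gen}}$.

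The hard part will be the final reassembly: showing that the $H^{\mathrm{sing}}$-contributions from $g_1 g_2$ and from $g_2 g_1$ add up to $\mathcal{D}^{\bar{v}}$ of the pole-free difference $g_1 g_2 T(v+z) - g_2 g_1 T(v+z)$ on the singular tableaux and their derivatives. The structural fact that makes this go through — which I expect to be isolated as a lemma in \S\ref{proof-d-t}, resting on the combinatorics of the $\varepsilon$-shifts worked out in the appendix — is that in each doubly shifted summand at most one of the two coefficient factors $e_{r_2 s_2}(\sigma(v+z))$ and $e_{r_1 s_1}(\sigma'(v+z'))$ can acquire a pole along $x-y=0$: a pole of the first forces $z'$ off ${\mathcal H}$, while a pole of the second forces $z'$ onto ${\mathcal H}$. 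Consequently $(x-y)\,g_1 g_2 T(v+z)$ has at worst a simple pole on $\overline{\mathcal H}$, and all its residues are captured by a single application of the derivation $\mathcal{D}^{\bar{v}}$. With this in hand, the identity follows by expanding both sides using the Leibniz rule $\mathcal{D}^{\bar{v}}(fF) = \mathcal{D}^{\bar{v}}(f)\,\mbox{ev}(\bar{v})(F) + f(\bar{v})\,\mathcal{D}^{\bar{v}}(F)$, Lemma \ref{dij-commute}(i),(iii), and the relation $\mbox{ev}(\bar{v})\circ g = g\circ\mbox{ev}(\bar{v})$ on ordinary tableaux, and then cancelling the terms symmetric under $g_1\leftrightarrow g_2$. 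As an easy consistency check, when $g_1$ or $g_2$ is diagonal the identity collapses to the elementary computation $e_{pp}(\bar{v}+z+\sigma(\varepsilon_{r_2 s_2})) - e_{pp}(\bar{v}+z) = \delta_{p r_2} - \delta_{p s_2}$, together with $\mathcal{D}^{\bar{v}}(e_{pp}(v+z')) = 0$.
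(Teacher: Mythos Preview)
Your overall plan mirrors the paper's: reduce to $g_1=E_{rs}$, $g_2=E_{\ell m}$, transport the generic identity through $\mathcal{D}^{\bar v}$, and split the intermediate expression $g_2 T(v+z)$ according to whether the shifted $z'$ lands on $\mathcal{H}$. Your regular/singular dichotomy is exactly the paper's split by whether $\sigma_2\in\overline{\Phi}_{\ell m}$, and your treatment of the regular part via Lemma~\ref{dij-commute}(ii) is correct.

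Where your sketch falls short is the ``final reassembly''. Two points. First, a minor confusion: since $\tau(z)\neq z$, the first coefficient $e_{r_2 s_2}(\sigma(v+z))$ is \emph{always} smooth on $\overline{\mathcal H}$; only the second factor $e_{r_1 s_1}(\sigma'(v+z'))$ can carry a (simple) pole, and only when $\tau(z')=z'$. So your ``at most one pole'' conclusion holds, but the clause ``a pole of the first forces $z'$ off $\mathcal H$'' is vacuous.

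Second and more substantively, the singular contributions do \emph{not} cancel by symmetry under $g_1\leftrightarrow g_2$. What the paper does for $\sigma_1'\in\overline{\Phi}_{rs}$, $\sigma_2'\in\overline{\Phi}_{\ell m}$ is group terms over the fibres $\Phi_{(\sigma_1',\sigma_2')}$, pair each $\sigma$ with $\tau\star\sigma$, and use the $\tau$-symmetry identities of Lemma~\ref{lem-symm} to reduce the comparison of the two sides to an identity of the shape
\[
2\sum_m \mathcal{D}^{\bar v}(f_m)\,\mathcal{D}^{\bar v}\big((x-y)g_m\big)\;=\;\mathcal{D}^{\bar v}\Big(\sum_m f_m g_m\Big).
\]
This is Lemma~\ref{dv-formulas}, and its essential hypothesis $\sum_m f_m g_m^{\tau}=0$ is furnished by Lemma~\ref{lm-rs-ident} (the generic commutator relation restricted to a fixed target tableau). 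The simple-pole fact you isolate is only Lemma~\ref{poles of order at most 1}, which guarantees the expressions live in the right function spaces but does not by itself yield the identity; your ``Leibniz rule plus $g_1\leftrightarrow g_2$ cancellation'' recipe neither produces the factor $2$ in Lemma~\ref{dv-formulas} nor explains why the $\tau$-twisted sum vanishes.
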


Combining Propositions \ref{t-v-rep} and \ref{d-t-v-rep} we obtain the following.

\begin{theorem}
If $\bar{v}$ is an $1$-singular vector in ${\mathbb C}^{\frac{n(n-1)}{2}}$, then  $ V(T(\bar{v}))$ is a $\mathfrak{gl} (n)$-module.
 \end{theorem}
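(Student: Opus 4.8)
The plan is to assemble the theorem from the two propositions that have already been stated, Proposition~\ref{t-v-rep} for the ordinary tableaux $T(\bar v + z)$ and Proposition~\ref{d-t-v-rep} for the derivative tableaux $\mathcal D T(\bar v + z)$, once I know that these classes of elements span $V(T(\bar v))$. First I would recall that, by construction, $V(T(\bar v))$ is by definition the linear span of $\{T(\bar v + z),\ \mathcal D_{ij}T(\bar v + z)\ |\ z \in \mathbb Z^{\frac{n(n-1)}{2}}\}$, so to check that a bilinear identity of the form $[g_1,g_2]F = g_1(g_2 F) - g_2(g_1 F)$ holds for every $F \in V(T(\bar v))$ it suffices, by linearity in $F$, to check it on each generator. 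The action of $\mathfrak{gl}(n)$ on these generators was defined just above (via $\mathcal D^{\bar v}((x-y)E_{rs}(\cdot))$ and $\mathcal D^{\bar v}(E_{rs}(\cdot))$), and Proposition~\ref{compatible} guarantees this is well defined despite the relations $T(\bar v + z) = T(\bar v + \tau(z))$ and $\mathcal D_{ij}T(\bar v + z) = -\mathcal D_{ij}T(\bar v + \tau(z))$.

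Next I would split the verification into two cases according to which type of generator $F$ is. If $F = T(\bar v + z)$, Proposition~\ref{t-v-rep} says exactly that $[g_1,g_2](T(\bar v+z)) = g_1(g_2(T(\bar v+z))) - g_2(g_1(T(\bar v+z)))$; note that the relation $T(\bar v+z) = T(\bar v+\tau(z))$ causes no difficulty here because the defining formula for $E_{rs}$ on $T(\bar v+z)$ is compatible with it by Proposition~\ref{compatible}(i), and the proof of Proposition~\ref{t-v-rep} works for any $z$, singular or not. If instead $F = \mathcal D T(\bar v + z)$ with $\tau(z) \neq z$ — which, together with their negatives, is enough to span all derivative tableaux since $\mathcal D T(\bar v + z) = 0$ when $\tau(z) = z$ — then Proposition~\ref{d-t-v-rep} gives the required identity directly. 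Since every generator falls into one of these two families (or is zero), bilinearity extends the commutator identity to all of $V(T(\bar v))$, which is precisely the statement that $V(T(\bar v))$ is a $U(\mathfrak{gl}(n))$-module; one also checks, trivially, that the operators $E_{rs}$ are linear, which is immediate from the definition since $\mathcal D^{\bar v}$, $\mathrm{ev}(\bar v)$, and multiplication by $(x-y)$ are all linear.

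Strictly speaking I should also observe that the bracket relation alone suffices: a $\mathfrak{gl}(n)$-action is the same data as a Lie algebra homomorphism $\mathfrak{gl}(n) \to \mathfrak{gl}(V(T(\bar v)))$, and since the $E_{rs}$ generate $\mathfrak{gl}(n)$ and the identity $[g_1,g_2]F = g_1 g_2 F - g_2 g_1 F$ is verified for all pairs of generators on all generators of the space, it holds for all $g_1, g_2 \in \mathfrak{gl}(n)$ and all $F$, by bilinearity in $g_1, g_2$ and linearity in $F$. This upgrades the bracket identity to a genuine module structure.

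The real content of the theorem is therefore entirely carried by Propositions~\ref{t-v-rep} and~\ref{d-t-v-rep}, whose proofs in turn rest on Lemma~\ref{dij-commute} — the statement that $\mathcal D^{\bar v}$ intertwines the generic action with the action on $V(T(\bar v))$, modulo the factor $(x-y)$. So the expected main obstacle is not in this theorem at all but upstream, in Proposition~\ref{d-t-v-rep}: establishing the Leibniz-type identity $\mathcal D^{\bar v}(g_1 g_2 F) = g_1 \mathcal D^{\bar v}(g_2 F) + (\text{correction terms from } \mathrm{ev}(\bar v))$ requires that the second-order terms produced by differentiating products of the rational coefficients $e_{rs}$ twice — once for each of $g_1, g_2$ — cancel correctly, and that the poles along $\mathcal H$ coming from the denominators $w_{ki} - w_{kj}$ are exactly killed by the $(x-y)$ prefactor so that everything lands in $\mathcal F_{ij} \otimes \mathcal V_{\mathrm{gen}}$ before evaluation. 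Given those propositions, the theorem itself is a short assembly argument.
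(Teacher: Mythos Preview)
Your proposal is correct and matches the paper's approach exactly: the paper's proof is the single sentence ``Combining Propositions~\ref{t-v-rep} and~\ref{d-t-v-rep} we obtain the following,'' and your argument simply unpacks this by noting that the tableaux $T(\bar v+z)$ and derivative tableaux $\mathcal D T(\bar v+z)$ span $V(T(\bar v))$ and that the two propositions handle the commutator identity on each type. Your additional remarks about well-definedness via Proposition~\ref{compatible} and the location of the real work in Proposition~\ref{d-t-v-rep} are accurate and appropriate context.
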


\section{Action of the  Gelfand-Tsetlin subalgebra of $\mathfrak{gl}(n)$ on the module $V(T(\bar{v}))$} \label{sec-action}

Recall that $\gamma_{rs}(w)=\sum_{i=1}^r
(w_{ri}+r-1)^s \prod_{j\ne i} \left( 1 -
\frac{1}{w_{ri}-w_{rj}} \right)$ is a symmetric polynomial in $w_{r1}, \ldots, w_{rr}$ of degree $s$, and that $\{\gamma_{r1}, \ldots, \gamma_{rr}\}$ generate the algebra of symmetric polynomials in $w_{r1}, \ldots, w_{rr}$. 

Also, recall that  for a generic vector $v$ and $z\in\mathbb{Z}^{\frac{n(n-1)}{2}}$, we have
$$c_{rs}(T(v+z)) \  = \gamma_{rs}(v+z) T(v+z),$$
where $c_{rs}$ are the generators of $\Gamma$ defined in (\ref{equ_3}). Recall the fixed set of representatives $\mathcal S$ from Definition \ref{def-s}.

\begin{lemma}\label{generators of Gamma acting in many tableaux} Let $v$ be any nonzero generic vector in $\mathcal{S}$ such that $v_{ki}=x$ and $v_{kj}=y$.
\begin{itemize}
\item[(i)] If $z\in\mathbb{Z}^{\frac{n(n-1)}{2}}$ is such that $|z_{ki}-z_{kj}|\geq n-m$ for some $0 \leq m \leq n$, then for each $1\leq r \leq s\leq n-m$ we have:
\begin{itemize}
\item[(a)]
$
c_{rs}(T(\bar{v}+z))=\mathcal{D}^{\overline{v}}((x-y)c_{rs}(T(v+z))),
$
\item[(b)]
$c_{rs}(\mathcal{D}T(\bar{v}+z)))= \mathcal{D}^{\overline{v}}(c_{rs}(T(v+z)))$ if $z\neq \tau(z)$,
\end{itemize}
\item[(ii)] If $1\leq s\leq r\leq k$ and $z\in\mathbb{Z}^{\frac{n(n-1)}{2}}$ then the action of $c_{rs}$ on $T(\bar{v}+z)$ and 
$\mathcal{D}T(\bar{v}+z))$ is defined by the formulas in {\rm (i)}.

\end{itemize}
\end{lemma}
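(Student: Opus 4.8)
The plan is to express $c_{rs}$ through the generators of $\mathfrak{gl}(n)$ and to transport the generic identity $c_{rs}(T(v+z))=\gamma_{rs}(v+z)\,T(v+z)$ to $V(T(\overline{v}))$ by commuting $\mathcal{D}^{\overline{v}}$ past the action, using Lemma~\ref{dij-commute}(ii) iteratively. Recall that $c_{rs}$ is a sum of words $E_{i_1i_2}E_{i_2i_3}\cdots E_{i_si_1}$ of length exactly $s$ in the generators $E_{ab}$ with $a,b\in\{1,\dots,r\}$; recall also that $c_{rs}(T(v+z))$ has polynomial coefficient, so both right-hand sides in (a) and (b) are well defined (the poles carried by the individual words of $c_{rs}$ cancel in the sum, since $c_{rs}$ is central in $U(\mathfrak{gl}_r)$). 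Finally, $T(\overline{v}+z)=\mathcal{D}^{\overline{v}}((x-y)T(v+z))$, because $\mathcal{D}^{\overline{v}}(x-y)=1$ and $(x-y)(\overline{v})=0$, and $\mathcal{D}^{\overline{v}}(T(v+z))=\mathcal{D}T(\overline{v}+z)$ by definition.

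The heart of the argument is the following smoothness statement: under the hypotheses of (i) (respectively of (ii)), for every word $X=E_{a_1b_1}\cdots E_{a_pb_p}$ occurring in $c_{rs}$ and every suffix $X'=E_{a_\ell b_\ell}\cdots E_{a_pb_p}$, both $X'(T(v+z))$ and $(x-y)X'(T(v+z))$ lie in $\mathcal{F}_{ij}\otimes V(T(v))$. In case (ii), where $r\le k$, this is immediate from the Gelfand-Tsetlin formulas of Proposition~\ref{coefficients e_ij}: each generator $E_{ab}$ with $a,b\le r\le k$ modifies only rows strictly below $k$ and has coefficients whose denominators involve only rows strictly below $k$, so by the genericity of $\overline{v}$ off $\overline{\mathcal{H}}$ no singularity at $\overline{v}$ (or at any integer translate of it) can occur, for any $z$. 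In case (i) one tracks the integer $d(z'):=z'_{ki}-z'_{kj}$ while evaluating a word from the right: each successive generator changes one entry of the $k$th row by $\pm1$, hence changes $d$ by at most $1$, and its coefficient is evaluated at the position reached \emph{before} that shift; therefore, starting from $|d(z)|\ge n-m\ge s\ge p$, every coefficient appearing in $X'(T(v+z))$ is evaluated at a position $z'$ with $|d(z')|\ge(n-m)-(p-1)\ge 1$. Since on $\overline{\mathcal{H}}$ the only coincidence among $k$th-row entries is between positions $i$ and $j$, and a $k$th-row denominator of a generator vanishes at the relevant translate of $\overline{v}$ exactly when $d=0$ there, no pole arises, which proves the claim.

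Granting this, fix a word $X$ of $c_{rs}$ and set $F=(x-y)T(v+z)$, respectively $F=T(v+z)$ with $z\ne\tau(z)$. Applying Lemma~\ref{dij-commute}(ii) once for each generator of $X$, innermost first, moves $\mathcal{D}^{\overline{v}}$ to the outside: at every step the two elements required to lie in $\mathcal{F}_{ij}\otimes V(T(v))$ are two consecutive suffixes of $X$ applied to $F$, and these are smooth by the claim. Hence $X(\mathcal{D}^{\overline{v}}(F))=\mathcal{D}^{\overline{v}}(X(F))$, and summing over the words of $c_{rs}$ gives $c_{rs}(\mathcal{D}^{\overline{v}}(F))=\mathcal{D}^{\overline{v}}(c_{rs}(F))$. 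For $F=(x-y)T(v+z)$ this reads $c_{rs}(T(\overline{v}+z))=\mathcal{D}^{\overline{v}}(c_{rs}((x-y)T(v+z)))=\mathcal{D}^{\overline{v}}((x-y)c_{rs}(T(v+z)))$, using the trivial action of $\mathcal{F}$; this is (i)(a). For $F=T(v+z)$ with $z\ne\tau(z)$ it reads $c_{rs}(\mathcal{D}T(\overline{v}+z))=\mathcal{D}^{\overline{v}}(c_{rs}(T(v+z)))$; this is (i)(b). In case (ii) the smoothness claim needs no restriction on $z$, so the same computation proves (ii) as well (with $z\ne\tau(z)$ understood in the formula for $\mathcal{D}T(\overline{v}+z)$, which vanishes otherwise).

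The step I expect to be the main obstacle is the smoothness claim in case (i): one must exclude any pole appearing at $\overline{v}$ among the intermediate coefficients produced while evaluating a length-$s$ word of $c_{rs}$, and it is exactly this combinatorial control of $d(z')$ that forces the hypothesis $|z_{ki}-z_{kj}|\ge n-m\ge s$. Once that is in hand, the remainder — peeling off the generators, the triviality of the $\mathcal{F}$-action, and the elementary identities for $\mathcal{D}^{\overline{v}}$ on $(x-y)T(v+z)$ and $T(v+z)$ — is routine.
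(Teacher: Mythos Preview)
Your proof is correct and follows essentially the same approach as the paper: both arguments express $c_{rs}$ as a sum of words $E_{i_1i_2}\cdots E_{i_si_1}$, verify that each successive suffix applied to $(x-y)T(v+z)$ (respectively $T(v+z)$ when $z\neq\tau(z)$) stays in $\mathcal{F}_{ij}\otimes\mathcal{V}_{\rm gen}$, and then iterate Lemma~\ref{dij-commute}(ii) to commute $\mathcal{D}^{\overline{v}}$ past the whole word. The paper's proof simply asserts the smoothness ``by the hypothesis $|z_{ki}-z_{kj}|\geq n-m$'' without further detail, whereas you make the mechanism explicit by tracking $d(z')=z'_{ki}-z'_{kj}$ along the word; your observation that a word of length $s\leq n-m$ changes $|d|$ by at most one per step, so $|d(z')|\geq (n-m)-(s-1)\geq 1$ at every intermediate position, is exactly the content the paper leaves implicit.
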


\begin{proof}
\begin{enumerate}[(i)]
\item By the hypothesis $|z_{ki}-z_{kj}|\geq n-m$, the coefficients  that appear in the decompositions of the vectors $(x-y)E_{i_s, i_1}(T(v+z))$, $E_{i_s, i_1}(T(v+z))$ ($z\neq\tau(z)$), $(x-y)E_{i_t, i_{t+1}}\ldots E_{i_s, i_1}(T(v+z))$ and $E_{i_t, i_{t+1}}\ldots E_{i_s, i_1}(T(v+z))$ ($z\neq\tau(z)$) for $1\leq t\leq s-1$ are all in $\mathcal{F}_{ij}$.
\begin{enumerate}[(a)]
\item For each $(i_{1},\ldots,i_{s})\in\{1,\ldots,r\}^{s}$ we have $(x-y)E_{i_s, i_1}(T(v+z))\in{\mathcal F}_{ij} \otimes {\mathcal V}_{\rm gen}$ and for each $1\leq t\leq s-1$, we have $(x-y)E_{i_t, i_{t+1}}\ldots E_{i_s, i_1}(T(v+z))\in{\mathcal F}_{ij} \otimes {\mathcal V}_{\rm gen}$. Then the statement follows from Lemma \ref{dij-commute}(ii). 

\item If $z\neq \tau(z)$ then for each $(i_{1},\ldots,i_{s})\in\{1,\ldots,r\}^{s}$, $E_{i_s i_1}(T(v+z))\in{\mathcal F}_{ij} \otimes {\mathcal V}_{\rm gen}$ and for each $1\leq t\leq s-1$,  $E_{i_t i_{t+1}}\ldots E_{i_s i_1}(T(v+z))\in{\mathcal F}_{ij} \otimes {\mathcal V}_{\rm gen}$. Hence we can use Lemma \ref{dij-commute}(ii).
\end{enumerate}
\item As $1\leq s\leq r\leq k$ then every tableau that appears in the decomposition of $(x-y)E_{i_s i_1}(T(v+z))$, $E_{i_s i_1}(T(v+z))$ ($z\neq\tau(z)$), $(x-y)E_{i_t i_{t+1}}\ldots E_{i_s i_1}(T(v+z))$ and $E_{i_t i_{t+1}}\ldots E_{i_s i_1}(T(v+z))$ ($z\neq\tau(z)$) for $1\leq t\leq s-1$ has the same  $(k,i)$th and $(k,j)$th entries. So, all of the listed vectors are in ${\mathcal F}_{ij} \otimes {\mathcal V}_{\rm gen}$ and using Lemma \ref{dij-commute}(ii) we complete the proof.\end{enumerate} \end{proof}

\begin{lemma}\label{lem-ck2}
If $\tau(z)\neq z$ then we have the following identities.
\begin{itemize}
\item[(i)] $c_{k2}(T(\bar{v}+z))=\gamma_{k2}(\bar{v}+z)T(\bar{v}+z)$ 
\item[(ii)] $(c_{k2}-\gamma_{k2}(\bar{v}+z))\mathcal{D}T(\bar{v}+z)\neq 0$.
\item[(iii)] $(c_{k2}-\gamma_{k2}(\bar{v}+z))^{2}\mathcal{D}T(\bar{v}+z)=0.$
\end{itemize}

\end{lemma}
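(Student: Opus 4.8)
The plan is to reduce all three identities to one computation inside the generic family ${\mathcal V}_{\rm gen}$. Since $1\le 2\le k$ (note $k\ge j>i\ge 1$, so $k\ge 2$), Lemma~\ref{generators of Gamma acting in many tableaux}(ii) applies with $r=k$, $s=2$ and gives
$$
c_{k2}(T(\bar{v}+z))=\mathcal{D}^{\bar{v}}\big((x-y)\,c_{k2}(T(v+z))\big),\qquad c_{k2}(\mathcal{D}T(\bar{v}+z))=\mathcal{D}^{\bar{v}}\big(c_{k2}(T(v+z))\big),
$$
where $v$ is any nonzero generic vector in $\mathcal{S}$ with $v_{ki}=x$, $v_{kj}=y$, the second identity being valid because $\tau(z)\ne z$. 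By (\ref{action of Gamma in finite dimensional modules}) one has $c_{k2}(T(v+z))=\gamma_{k2}(v+z)\,T(v+z)$, with $\gamma_{k2}$ the symmetric polynomial of degree $2$ from (\ref{def-gamma}) in the $k$-th row entries. Both $\gamma_{k2}(v+z)$ and $(x-y)\gamma_{k2}(v+z)$ are polynomials in $v$, hence lie in $\mathcal{F}_{ij}$, so I may feed these elements of $\mathcal{F}_{ij}\otimes V(T(v))$ into the defining product rule of $\mathcal{D}^{\bar{v}}$; everything then comes down to evaluating $\mathcal{D}^{\bar{v}}\big(\gamma_{k2}(v+z)\big)$ and $\mathcal{D}^{\bar{v}}\big((x-y)\gamma_{k2}(v+z)\big)$.

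The crucial computation is the identity $\frac{1}{2}\big(\partial_{w_{ki}}-\partial_{w_{kj}}\big)\gamma_{k2}(w)=w_{ki}-w_{kj}$. I would first observe that the degree-$2$ homogeneous component of $\gamma_{k2}$ is $\sum_{l=1}^{k}w_{kl}^{2}$: rescaling the $k$-th row by $t$ in (\ref{def-gamma}) and letting $t\to\infty$, each factor $1-\frac{1}{t(w_{ki}-w_{kj})}$ tends to $1$, so $t^{-2}\gamma_{k2}(tw)\to\sum_l w_{kl}^{2}$. Hence $\gamma_{k2}(w)=\sum_l w_{kl}^{2}+q(w)$ with $q$ a symmetric polynomial of degree $\le 1$; any such $q$ equals $c\sum_l w_{kl}+d$ and is annihilated by $\partial_{w_{ki}}-\partial_{w_{kj}}$, while $\partial_{w_{ki}}-\partial_{w_{kj}}$ sends $\sum_l w_{kl}^{2}$ to $2(w_{ki}-w_{kj})$. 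Evaluating at $w=\bar{v}+z$ and using $\bar{v}_{ki}=\bar{v}_{kj}$ yields $\mathcal{D}^{\bar{v}}\big(\gamma_{k2}(v+z)\big)=z_{ki}-z_{kj}$. A parallel product-rule computation shows that for any $h\in\mathcal{F}_{ij}$ one has $\mathcal{D}^{\bar{v}}\big((x-y)h\big)=h(\bar{v})$: the derivative of $x-y$ contributes $h(\bar{v})$, the remaining term $(x-y)(\partial_{w_{ki}}-\partial_{w_{kj}})h$ is proportional to $\bar{v}_{ki}-\bar{v}_{kj}=0$, and the boundary value $(x-y)h$ vanishes at $\bar{v}$. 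In particular $\mathcal{D}^{\bar{v}}\big((x-y)\gamma_{k2}(v+z)\big)=\gamma_{k2}(\bar{v}+z)$.

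Assembling: for (i), feeding $(x-y)\gamma_{k2}(v+z)T(v+z)$ into the definition of $\mathcal{D}^{\bar{v}}$ gives $c_{k2}(T(\bar{v}+z))=\gamma_{k2}(\bar{v}+z)T(\bar{v}+z)+\big((x-y)\gamma_{k2}(v+z)\big)(\bar{v})\,\mathcal{D}T(\bar{v}+z)=\gamma_{k2}(\bar{v}+z)T(\bar{v}+z)$, the last term vanishing. For (ii), feeding $\gamma_{k2}(v+z)T(v+z)$ into $\mathcal{D}^{\bar{v}}$ gives $c_{k2}(\mathcal{D}T(\bar{v}+z))=(z_{ki}-z_{kj})T(\bar{v}+z)+\gamma_{k2}(\bar{v}+z)\mathcal{D}T(\bar{v}+z)$, hence $(c_{k2}-\gamma_{k2}(\bar{v}+z))\mathcal{D}T(\bar{v}+z)=(z_{ki}-z_{kj})T(\bar{v}+z)$, which is nonzero since $\tau(z)\ne z$ forces $z_{ki}\ne z_{kj}$ and $T(\bar{v}+z)$ is a nonzero basis vector. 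Then (iii) follows by applying $c_{k2}-\gamma_{k2}(\bar{v}+z)$ once more to this equality and invoking (i): $(c_{k2}-\gamma_{k2}(\bar{v}+z))^{2}\mathcal{D}T(\bar{v}+z)=(z_{ki}-z_{kj})(c_{k2}-\gamma_{k2}(\bar{v}+z))T(\bar{v}+z)=0$.

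The only step requiring genuine (if short) work is isolating the degree-$2$ part of $\gamma_{k2}$ and thereby identifying $\mathcal{D}^{\bar{v}}(\gamma_{k2}(v+z))$ with the scalar $z_{ki}-z_{kj}$; this is also the conceptual heart of the statement, since it shows exactly how passing to a shifted tableau $T(\bar{v}+z)$ with $z_{ki}\ne z_{kj}$ — i.e. off the singular hyperplane $\mathcal{H}$ — converts the a priori singular behaviour of $c_{k2}$ into a genuine length-$2$ Jordan block on the two-dimensional space spanned by $T(\bar{v}+z)$ and $\mathcal{D}T(\bar{v}+z)$. Everything else is bookkeeping with Lemma~\ref{generators of Gamma acting in many tableaux} and the product rule defining $\mathcal{D}^{\bar{v}}$.
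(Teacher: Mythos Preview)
Your proof is correct and follows essentially the same route as the paper: both invoke Lemma~\ref{generators of Gamma acting in many tableaux}(ii) with $r=k$, $s=2$, reduce to computing $\mathcal{D}^{\bar v}\big(\gamma_{k2}(v+z)\big)$, and deduce (iii) from (i) and (ii). Your argument is in fact slightly sharper: by identifying the top homogeneous part of $\gamma_{k2}$ as $\sum_l w_{kl}^2$ you obtain the exact value $\mathcal{D}^{\bar v}(\gamma_{k2}(v+z))=z_{ki}-z_{kj}$, whereas the paper records it as $2a(z_{ki}-z_{kj})$ with $a$ the coefficient of $(v_{ki}+z_{ki})^2$ and leaves the nonvanishing of $a$ implicit.
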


\begin{proof}
If $w$ is a generic vector then $ c_{k2}T(w)=\gamma_{k2}(w)T(w)$ where $\gamma_{k2}(w)$ is a quadratic symmetric polynomial in variables $w_{k1},\ldots,w_{kk}$.
\begin{enumerate}[(i)]
\item  By Lemma \ref{generators of Gamma acting in many tableaux}(ii),  we have 
\begin{align*}
c_{k2}(T(\bar{v}+z))=&\mathcal{D}^{\overline{v}}((x-y)c_{k2}(T(v+z)))\\
=&\mathcal{D}^{\overline{v}}((x-y)\gamma_{k2}(v+z)T(v+z))\\
=&\gamma_{k2}(\bar{v}+z)T(\bar{v}+z)).
\end{align*} 
\item Also by Lemma \ref{generators of Gamma acting in many tableaux}(ii) we have:
\begin{align*}
c_{k2}(\mathcal{D}T(\bar{v}+z)))=& \mathcal{D}^{\overline{v}}(c_{k2}(T(v+z)))\\
=& \mathcal{D}^{\overline{v}}(\gamma_{k2}(v+z)T(v+z))\\
=& \mathcal{D}^{\overline{v}}(\gamma_{k2}(v+z))T(\bar{v}+z))+\gamma_{k2}(\bar{v}+z)\mathcal{D}T(\bar{v}+z))
\end{align*}
With $\mathcal{D}^{\overline{v}}(\gamma_{k2}(v+z))=2a(z_{ki}-z_{kj})\neq 0$ where $a$ is the coefficient of $(v_{ki}+z_{ki})^{2}$ in $\gamma_{k2}(v+z)$.
\item  This part follows from (i) and (ii).
\end{enumerate}
\end{proof}
\begin{lemma}\label{Gamma k separates tableaux}
Let $\Gamma_{k-1}$ be the subalgebra of $\Gamma$ generated by $\{c_{rs}:1\leq s\leq r\leq k-1\}$. If $z,z'\in\mathbb{Z}^{\frac{n(n-1)}{2}}$ are such that $z_{rs}\neq z'_{rs}$ for some $1\leq s\leq r\leq k-1$. Then, $\Gamma_{k-1}$ separates the tableaux $T(\bar{v}+z)$ and $T(\bar{v}+z')$, that is, there exis $c\in\Gamma_{k-1}$ and $\gamma\in\mathbb{C}$ such that $(c-\gamma)T(\bar{v}+z)=0$ but $(c-\gamma)T(\bar{v}+z')\neq 0$.
\end{lemma}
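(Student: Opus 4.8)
The plan is to reduce the action of $\Gamma_{k-1}$ on the regular tableaux $T(\bar{v}+z)$ to a scalar action, and then to exploit the genericity of $\bar{v}$ on the rows strictly below $k$.

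First I would check that for every $r\le k-1$ and every $1\le s\le r$ the generator $c_{rs}$ acts on $T(\bar{v}+z)$ by the scalar $\gamma_{rs}(\bar{v}+z)$. By Lemma \ref{generators of Gamma acting in many tableaux}(ii), $c_{rs}(T(\bar{v}+z))=\mathcal{D}^{\bar{v}}((x-y)c_{rs}(T(v+z)))$ for generic $v$, and $c_{rs}(T(v+z))=\gamma_{rs}(v+z)T(v+z)$. Since $r\le k-1<k$, the symmetric polynomial $\gamma_{rs}$ in $v_{r1},\dots,v_{rr}$ involves neither $x=v_{ki}$ nor $y=v_{kj}$, so $\mathcal{D}^{\bar{v}}(\gamma_{rs}(v+z))=\tfrac12(\partial_{v_{ki}}-\partial_{v_{kj}})\gamma_{rs}(v+z)|_{\bar{v}}=0$; applying the Leibniz rule to $\mathcal{D}^{\bar{v}}$ together with $(x-y)|_{\bar{v}}=0$ then gives $\mathcal{D}^{\bar{v}}((x-y)\gamma_{rs}(v+z)T(v+z))=\gamma_{rs}(\bar{v}+z)T(\bar{v}+z)$. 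This is exactly the computation already recorded in Lemma \ref{lem-ck2}(i) for $c_{k2}$, specialized to lower rows where the derivative-tableau correction term drops out.

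Next I would show that for $r\le k-1$ the multiset $M_r(z):=\{\bar{v}_{r1}+z_{r1},\dots,\bar{v}_{rr}+z_{rr}\}$ determines the tuple $(z_{r1},\dots,z_{rr})$. This is where the genericity of $\bar{v}$ is used: for $r\le k-1$ and $s\ne t$ the pair $((r,s),(r,t))$ is not the singular pair $((k,i),(k,j))$, so $\bar{v}_{rs}-\bar{v}_{rt}\notin\mathbb{Z}$; hence the cosets $\bar{v}_{r1}+\mathbb{Z},\dots,\bar{v}_{rr}+\mathbb{Z}$ in $\mathbb{C}/\mathbb{Z}$ are pairwise distinct, $M_r(z)$ contains exactly one element in the coset $\bar{v}_{rs}+\mathbb{Z}$, and that element is $\bar{v}_{rs}+z_{rs}$. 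In particular, if $z_{rs}\ne z'_{rs}$ for some $s$ then $M_r(z)\ne M_r(z')$ as multisets. Now, given $z,z'$ with $z_{rs}\ne z'_{rs}$ for some $1\le s\le r\le k-1$, fix such an $r$. Since $\{\gamma_{r1},\dots,\gamma_{rr}\}$ generate the algebra of symmetric polynomials in $r$ variables, which separate multisets of size $r$, there is an $s'$ with $1\le s'\le r$ and $\gamma_{rs'}(\bar{v}+z)\ne\gamma_{rs'}(\bar{v}+z')$. Taking $c=c_{rs'}\in\Gamma_{k-1}$ and $\gamma=\gamma_{rs'}(\bar{v}+z)$, Step 1 gives $(c-\gamma)T(\bar{v}+z)=0$, while $(c-\gamma)T(\bar{v}+z')=(\gamma_{rs'}(\bar{v}+z')-\gamma_{rs'}(\bar{v}+z))\,T(\bar{v}+z')\ne 0$ because $T(\bar{v}+z')$ is a nonzero element of $V(T(\bar{v}))$.

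The only genuine subtlety, and the step I expect to require the most care, is Step 1: one must verify that $c_{rs}$ acts diagonally on $T(\bar{v}+z)$ with no surviving $\mathcal{D}T$-component, which rests precisely on $\gamma_{rs}$ being constant in the singular variables $x,y$ for $r<k$. For rows $r\ge k$ this fails, which is exactly why the lemma is stated only for $\Gamma_{k-1}$. Everything else is routine bookkeeping with cosets modulo $\mathbb{Z}$ and the standard fact that the $\gamma_{rs}$ generate the symmetric functions.
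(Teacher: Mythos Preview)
Your proof is correct and follows essentially the same approach as the paper. Both rely on Lemma~\ref{generators of Gamma acting in many tableaux}(ii) to see that $c_{rs}$ acts on $T(\bar{v}+z)$ by the scalar $\gamma_{rs}(\bar{v}+z)$ for $r\le k-1$, and both then use the genericity of $\bar{v}$ on rows $1,\dots,k-1$ to conclude that distinct integer shifts on those rows give distinct symmetric-function values; the only difference is that the paper argues by contradiction (equal $\Gamma_{k-1}$-characters would force the rows to differ by a permutation in $S_{k-1}\times\cdots\times S_1$, impossible since the differences are integers and $\bar{v}$ is generic there), whereas you phrase the same step constructively via the coset argument in $\mathbb{C}/\mathbb{Z}$ and then exhibit the separating $c_{rs'}$ directly.
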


\begin{proof}
By Lemma \ref{generators of Gamma acting in many tableaux}(ii), we know that the action of the generators of $\Gamma_{k-1}$ on $T(\bar{v}+z)$ and $T(\bar{v}+z')$  is given by symmetric polynomials in the entries of rows $k-1,\ldots,1$. Assume the contrary, i.e. that $T(\bar{v}+z)$ and $T(\bar{v}+z')$ have the same character associated with the generators of $\Gamma_{k-1}$. Like in the generic case, the latter implies that one of the tableaux is obtained from the other by the action of an element in $S_{k-1}\times \cdots \times S_{1}$.  But the difference of the entries on the rows  $k-1,\ldots,1$ of $T(\bar{v}+z)$ and $T(\bar{v}+z')$ are integers. Hence, the tableaux must have different characters which leads to a contradiction. \end{proof}

For any $m\in\mathbb{Z}_{\geq 0}$ let $R_{m}$ be the set of $z\in\mathbb{Z}^{\frac{n(n-1)}{2}}$ such that $|z_{ki}-z_{kj}|=m$.

\begin{lemma}\label{lem-connect}
 If $z\in R_{m}$ then  there exists $\bar{z}\in R_{m+1}$ such that $T(\bar{v}+z)$ appears with non-zero coefficient in the decomposition of $E_{k+1,k-t}T(\bar{v}+\bar{z})$ for some $t\in\{0,1,\ldots,k-1\}$.
\end{lemma}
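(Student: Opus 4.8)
The statement asserts that, starting from a tableau $T(\bar v + z)$ with $|z_{ki}-z_{kj}| = m$, one can reach a tableau $T(\bar v + \bar z)$ with $|\bar z_{ki}-\bar z_{kj}| = m+1$ by applying one of the lowering generators $E_{k+1,k-t}$, in the sense that $T(\bar v+z)$ appears with nonzero coefficient in $E_{k+1,k-t}\,T(\bar v+\bar z)$. The natural approach is to work with the permutation form of the Gelfand-Tsetlin formulas from Proposition \ref{coefficients e_ij} together with the defining formula $E_{rs}(T(\bar v+z)) = \mathcal{D}^{\bar v}((x-y)E_{rs}(T(v+z)))$ for the module $V(T(\bar v))$, and simply exhibit the required $\bar z$ explicitly. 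The key observation is that $E_{k+1,k-t}$ acts (up to generic coefficients) by shifting entries in rows $k, k-1, \ldots, k-t$ via $\sigma(\varepsilon_{k-t,k+1})$ for $\sigma \in \Phi_{k-t,k+1}$, and in particular the summand corresponding to $\sigma = \mathrm{Id}$ shifts the $(k,1)$-entry down by one. So I would choose $\bar z$ so that $T(\bar v + z)$ is exactly the $\sigma = \mathrm{Id}$ summand (or another conveniently chosen summand) of $E_{k+1,k-t}\,T(\bar v + \bar z)$, and check that the relevant coefficient is nonzero and that the difference $|\bar z_{ki} - \bar z_{kj}|$ increases by one.

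\textbf{Key steps.} First I would reduce to the two subcases determined by how the singular pair of rows $k$ interacts with the shift. Write $d = z_{ki} - z_{kj}$, so $|d| = m$; without loss of generality assume $d \geq 0$ (the case $d \leq 0$ being symmetric via $\tau$). If the generator $E_{k+1,k-t}$ includes a shift of the $(k,i)$-entry (which happens for the summand $\sigma$ with $\sigma[k] = (1,i)$, adding $\delta^{k,1}$ moved to position $(k,i)$ — here I must be careful to translate $\varepsilon_{k-t,k+1} = \delta^{k-t,1}+\cdots+\delta^{k,1}$ and the conjugation by $\sigma$ correctly), then reading the formula backwards: choosing $\bar z = z + \sigma(\varepsilon_{k-t,k+1})$ makes $T(\bar v+z)$ appear as the corresponding summand of $E_{k+1,k-t}\,T(\bar v + \bar z)$, and the shift changes $\bar z_{ki} - \bar z_{kj}$ to $d+1$ (or $d-1$, forcing the appropriate choice of which summand to use to guarantee the absolute value goes up). Second, for the specific $\bar z$ chosen I must verify the coefficient $e_{k+1,k-t}(\sigma(\bar v + \bar z))$ (a product of the $e^{(-)}_j$ and $e_{k+1,k}$ factors from Definition \ref{definition of coefficients e_rs}, evaluated at $\bar v$ after the $\mathcal{D}^{\bar v}$ operation) is nonzero: since $\bar v + \mathbb{Z}^{\frac{n(n-1)}{2}} \subset \overline{\mathcal H}$, all the relevant differences except the $(k,i,j)$ one are noninteger hence nonzero, and the one potentially problematic factor involving $\bar v_{ki} - \bar v_{kj} + (\bar z_{ki} - \bar z_{kj})$ equals $d+1 \neq 0$ by construction — so no denominator vanishes and, crucially, no numerator vanishes for the chosen summand. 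Third, I would double-check that $\tau(\bar z) \neq \bar z$ so that $T(\bar v + \bar z)$ is a genuine (non-collapsed) basis tableau and the formula $E_{k+1,k-t}(T(\bar v+\bar z)) = \mathcal{D}^{\bar v}((x-y)E_{k+1,k-t}(T(v+\bar z)))$ produces a legitimate nonzero term on $T(\bar v+z)$; if $z$ itself happens to satisfy $\tau(z) = z$, i.e. $d = 0$, then $T(\bar v+z)$ is still a valid tableau and the argument goes through with $\bar z$ having $|\bar z_{ki}-\bar z_{kj}| = 1$.

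\textbf{Main obstacle.} The delicate point is the bookkeeping: correctly identifying which generator $E_{k+1,k-t}$ and which summand $\sigma \in \Phi_{k-t,k+1}$ to use so that (a) the target tableau $\bar z$ has strictly larger $|\bar z_{ki} - \bar z_{kj}|$ — since a shift by $\pm\delta^{k,1}$ conjugated to position $(k,i)$ moves $\bar z_{ki}$ by $\pm 1$, one must pick the sign and the summand making $|d| \mapsto |d|+1$ rather than $|d|-1$ — and (b) the coefficient genuinely does not vanish, which requires checking that the numerator product $\prod_{j=1}^{k-1}(\bar v_{k,\sigma^{-1}(1)} + \cdots)$ has no accidental zero. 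The coefficient nonvanishing is really the heart of the matter, since in the singular setting numerators of the form $w_{k1} - w_{k-1,j}$ could in principle vanish; but because $\bar v + \mathbb{Z} \subset \overline{\mathcal H}$ these are all noninteger, so they are nonzero, and the only integer difference is the designed one $d+1$. Once the combinatorial choice is pinned down, the verification is a routine substitution into the formulas of Definition \ref{definition of coefficients e_rs} and Proposition \ref{coefficients e_ij}, using that $\mathcal{D}^{\bar v}$ applied to $(x-y)$ times a function smooth on $\overline{\mathcal H}$ just evaluates that function at $\bar v$.
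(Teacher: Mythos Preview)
There is a genuine gap in your argument, located in the ``Main obstacle'' paragraph where you assert that the numerator factors $w_{k,\sigma^{-1}(1)} - w_{k-1,j}$ are nonzero because ``$\bar v + \mathbb{Z}^{\frac{n(n-1)}{2}} \subset \overline{\mathcal H}$, these are all noninteger.'' This is a misreading of the $1$-singular hypothesis. The set $\overline{\mathcal H}$ and the assumption on $\bar v$ constrain only \emph{same-row} differences: $\bar v_{tr}-\bar v_{ts}\notin\mathbb Z$ for $(t,r,s)\neq(k,i,j)$. Nothing prevents a \emph{cross-row} difference such as $\bar v_{ki}-\bar v_{k-1,j}$ from being an integer. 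Consequently, with $t=0$ and $\bar z = z+\delta^{ki}$, the coefficient of $T(\bar v+z)$ in $E_{k+1,k}T(\bar v+\bar z)$ has numerator $\prod_{j=1}^{k-1}\big((w_{ki}+1)-w_{k-1,j}\big)$ (where $w=\bar v+z$), and this can perfectly well vanish.

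This is precisely why the parameter $t$ is present in the statement and why the paper's proof is more elaborate than a single shift. The paper's argument runs as follows: if $w_{ki}+1=w_{k-1,s_{k-1}}$ for some $s_{k-1}$ (so the $t=0$ coefficient vanishes), one follows the chain $w_{ki}+1=w_{k-1,s_{k-1}}$, $w_{k-1,s_{k-1}}+1=w_{k-2,s_{k-2}}$, and so on, until at some level $k-t$ the chain breaks, i.e.\ $w_{k-t,s_{k-t}}+1\neq w_{k-t-1,s}$ for all $s$. Such a $t\leq k-1$ exists because the same-row nonintegrality forces each $s_p$ to be unique, so the chain cannot cycle. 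One then sets $\bar z = z+\delta^{ki}+\delta^{k-1,s_{k-1}}+\cdots+\delta^{k-t,s_{k-t}}\in R_{m+1}$ and uses the summand of $E_{k+1,k-t}T(\bar v+\bar z)$ corresponding to $\sigma$ with $\sigma[k]=(1,i)$, $\sigma[k-p]=(1,s_{k-p})$ for $1\le p\le t$. The factor $e_{k-t+1,k-t}$ contributes the numerator $\prod_l\big((w_{k-t,s_{k-t}}+1)-w_{k-t-1,l}\big)$, nonzero by the chain-breaking condition; the remaining factors $e_j^{(-)}$ have numerators of the form $\prod_{l\neq s_{p}}\big(w_{k-p+1,\ast}+1-w_{k-p,l}\big)$, which are nonzero because the omitted index $s_p$ is exactly the one producing the equality and same-row entries are distinct. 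Your outline has all the surrounding machinery right, but without this descending-chain device the coefficient-nonvanishing step fails.
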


\begin{proof}
If $\bar{v}_{ki}=\bar{v}_{kj}=x$ and $w=\bar{v}+z$ then there exist $l\in\mathbb{Z}$ such that $w_{ki}=x+m-l$ and $w_{kj}=x-l$. Let $t$ be such that 
$$\begin{cases}
w_{ki}+1=w_{k-1,s_{k-1}} & \text{ for some \ \ \ $1\leq s_{k-1}\leq k-1$}\\
w_{k-1,s_{k-1}}+1=w_{k-2,s_{k-2}} & \text{ for some \ \ \ $1\leq s_{k-2}\leq k-2$}\\
\ \ \ \ \ \ \ \ \ \  \vdots & \ \ \ \ \ \ \ \ \ \  \vdots\\
w_{k-t+1,s_{k-t+1}}+1=w_{k-t,s_{k-t}} & \text{ for some \ \ \ $1\leq s_{k-t}\leq k-t$}\\
w_{k-t}+1\neq w_{k-t-1,s} & \text{ for any \ \ \ $1\leq s\leq k-t-1$}.\\
\end{cases}
$$
Then the coefficient of $T(\bar{v}+z)$ in the decomposition of $E_{k+1,k-t}T(\bar{v}+\bar{z})$  is not zero, where $\bar{z}=z+\delta^{ki}+\delta^{k-1, s_{k-1}}+\ldots+\delta^{k-t, s_{k-t}}\in R_{m+1}$.
\end{proof}
\begin{remark}\label{elements in Rm}
All  tableaux that appear in the decomposition of $E_{k+1,k-t}T(\bar{v}+\bar{z})$ are of the form $T(\bar{v}+\bar{z}+\sigma(\varepsilon_{k+1,k-t}))$ for  $\sigma\in\Phi_{k+1,k-t}$. Furthermore, we have:
$$
\begin{cases}
\bar{z}+\sigma(\varepsilon_{k+1,k-t})\in R_{m+2} & \text{ if } \ \ \ \  \sigma[k]=(1,j)\\
\bar{z}+\sigma(\varepsilon_{k+1,k-t})\in R_{m} & \text{ if } \ \ \ \  \sigma[k]=(1,i)\\
\bar{z}+\sigma(\varepsilon_{k+1,k-t})\in R_{m+1} & \text{ if } \ \ \ \  \sigma[k]\notin \{(1,i),(1,j)\}\\
\end{cases}
$$
In particular, all  tableaux $T(\bar{v}+\bar{z}+\sigma(\varepsilon_{k+1,k-t}))$ with $\bar{z}+\sigma(\varepsilon_{k+1,k-t})\in R_{m}$ have the same entries in rows $k,\ldots,n$ and two of these tableaux have at least one different entry in rows $1,\ldots,k-1$. Therefore, by Lemma \ref{Gamma k separates tableaux} all the tableaux $T(\bar{v}+\bar{z}+\sigma(\varepsilon_{k+1,k-t}))$ with $\bar{z}+\sigma(\varepsilon_{k+1,k-t})\in R_{m}$ have different Gelfand-Tsetlin  characters. 
 \end{remark}

\begin{theorem}\label{GT module structure}
The module $V(T(\bar{v}))$ is an $1$-singular  Gelfand-Tsetlin module. Moreover for any $z\in\mathbb{Z}^{\frac{n(n-1)}{2}}$ and any $1\leq r \leq s\leq n$ the following identities hold.
\begin{itemize}
\item[(i)]
$
c_{rs}(T(\bar{v}+z))=\mathcal{D}^{\overline{v}}((x-y)c_{rs}(T(v+z)))
$
\item[(ii)]
$c_{rs}(\mathcal{D}T(\bar{v}+z)))= \mathcal{D}^{\overline{v}}(c_{rs}(T(v+z)))$ if $z\neq \tau(z)$.
\end{itemize}
\end{theorem}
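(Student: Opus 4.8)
The plan is to prove the two identities first for $z$ with large separation $|z_{ki}-z_{kj}|$, where Lemma~\ref{generators of Gamma acting in many tableaux}(i) already applies, and then propagate to all $z$ by an inductive argument using the $\mathfrak{gl}(n)$-action that relates tableaux in $R_{m+1}$ to tableaux in $R_m$. The fact that $V(T(\bar{v}))$ is a Gelfand-Tsetlin module will follow once we know the $c_{rs}$ act locally finitely on each generator, which the explicit formulas in (i)--(ii) make transparent: on $T(\bar{v}+z)$ the operator $c_{rs}$ either acts by the scalar $\gamma_{rs}(\bar{v}+z)$ or (in the genuinely singular situation) by a scalar plus a nilpotent part landing in the span of finitely many tableaux, exactly as in Lemma~\ref{lem-ck2}.

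Here is the structure I would use. First, Lemma~\ref{generators of Gamma acting in many tableaux}(ii) already gives (i) and (ii) for all $1\le s\le r\le k$ and all $z$, and part (i) of that lemma gives them for $1\le r\le s\le n-m$ whenever $|z_{ki}-z_{kj}|\ge n-m$; in particular both identities hold whenever $|z_{ki}-z_{kj}|$ is large enough relative to $n$. So the only cases left are the generators $c_{rs}$ with $r>k$ (equivalently $s$ large), acting on tableaux $T(\bar{v}+z)$ and $\mathcal{D}T(\bar{v}+z)$ with $|z_{ki}-z_{kj}|$ small. I would set up a downward induction on $m=|z_{ki}-z_{kj}|$: suppose the identities (i)--(ii) are known for all $z'\in R_{m'}$ with $m'>m$, and let $z\in R_m$. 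By Lemma~\ref{lem-connect} there is $\bar{z}\in R_{m+1}$ and an index $t$ so that $T(\bar{v}+z)$ appears with nonzero coefficient in $E_{k+1,k-t}T(\bar{v}+\bar{z})$. The idea is then to compute $c_{rs}\,E_{k+1,k-t}T(\bar{v}+\bar{z})$ in two ways: on one hand $c_{rs}$ is central in $U_r$ for $r\ge k+1$, so it commutes with $E_{k+1,k-t}$, and by the inductive hypothesis we know how $c_{rs}$ acts on $T(\bar{v}+\bar{z})\in R_{m+1}$; on the other hand we can expand $E_{k+1,k-t}T(\bar{v}+\bar{z})=\sum_\sigma (\cdots)T(\bar{v}+\bar{z}+\sigma(\varepsilon_{k+1,k-t}))$ and apply $c_{rs}$ termwise. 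Using Remark~\ref{elements in Rm}, all summands other than those with $\bar{z}+\sigma(\varepsilon_{k+1,k-t})\in R_m$ lie in higher strata $R_{m+1}\cup R_{m+2}$, where $c_{rs}$ is already understood; and Remark~\ref{elements in Rm} further tells us that the $R_m$-summands have pairwise distinct Gelfand-Tsetlin characters with respect to $\Gamma_{k-1}$, so their contributions cannot cancel. Comparing the two computations and isolating the $T(\bar{v}+z)$-component (using that it has nonzero coefficient and a character distinct from the others) pins down $c_{rs}T(\bar{v}+z)$; matching it against the claimed formula $\mathcal{D}^{\bar{v}}((x-y)c_{rs}(T(v+z)))$ is then a direct check using the commutation of $\mathcal{D}^{\bar{v}}$ with the $\mathfrak{gl}(n)$-action from Lemma~\ref{dij-commute}. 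The same bookkeeping applied to $\mathcal{D}T(\bar{v}+z)$, using $E_{k+1,k-t}(\mathcal{D}T(\bar{v}+\bar{z}))=\mathcal{D}^{\bar{v}}(E_{k+1,k-t}T(v+\bar{z}))$, yields (ii).

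An alternative, cleaner route that I would in fact prefer: bypass the induction entirely by observing that for \emph{fixed} $r,s$ the expression $\mathcal{D}^{\bar{v}}((x-y)c_{rs}(T(v+z)))$ is, by the very definition of the module action and of $\mathcal{D}^{\bar{v}}$, what one gets by writing $c_{rs}$ as a noncommutative polynomial in the $E_{ab}$'s via \eqref{equ_3} and applying Lemma~\ref{dij-commute} repeatedly; since the module structure on $V(T(\bar{v}))$ was already established (the theorem just before Section~\ref{sec-action}), the operator $c_{rs}$ acting on $V(T(\bar{v}))$ is exactly the image of the central element $c_{rs}\in Z_r\subset U$, so it suffices to check the identity at the level of the formal computation expressing $c_{rs}$ through products of $E_{ab}$'s — and that is precisely Lemma~\ref{dij-commute}(ii)/(iii) applied term by term, provided every intermediate vector $E_{i_t i_{t+1}}\cdots E_{i_s i_1}T(v+z)$ (resp.\ with the extra $(x-y)$ factor) lies in ${\mathcal F}_{ij}\otimes{\mathcal V}_{\rm gen}$, i.e.\ has coefficients smooth on $\overline{\mathcal H}$. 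This smoothness is the only thing that can fail, and it fails exactly when two of the successive raising/lowering operators both touch row $k$ in the columns $i,j$ — which is controlled, again, by the separation $|z_{ki}-z_{kj}|$ and Lemma~\ref{generators of Gamma acting in many tableaux}. So in the end the two approaches meet at the same technical point.

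The main obstacle I anticipate is precisely this smoothness/pole-tracking: when $r>k$, the polynomial $c_{rs}$ expands into products $E_{i_1 i_2}\cdots E_{i_s i_1}$ in which several factors can simultaneously shift the $(k,i)$ and $(k,j)$ entries, and the individual coefficients $e_{ab}(\sigma(v+z))$ then have poles along ${\mathcal H}$ that only cancel in the full sum. Controlling this means either (a) doing the downward induction on $m$ carefully so that one never has to evaluate an ill-defined intermediate quantity — the role of Remark~\ref{elements in Rm} is exactly to guarantee the ``$R_m$-part'' is a linear combination of tableaux with distinct $\Gamma_{k-1}$-characters, hence the comparison of characters is legitimate — or (b) proving a separate combinatorial lemma that the product $(x-y)\cdot(\text{that part of } c_{rs} \text{ which can develop a double pole})$ is smooth on $\overline{\mathcal H}$. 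I expect (a) to be the cleaner path to write up, and I would organize the final proof as: (1) recall the already-established cases from Lemma~\ref{generators of Gamma acting in many tableaux}; (2) state the downward induction on $m$; (3) do the base case (large $m$); (4) the inductive step via Lemma~\ref{lem-connect} and Remark~\ref{elements in Rm}, comparing $c_{rs}E_{k+1,k-t}T(\bar{v}+\bar{z})$ computed two ways; (5) conclude local finiteness of each $c_{rs}$ on generators, hence that $V(T(\bar{v}))$ is a Gelfand-Tsetlin module, and note $1$-singularity follows from Lemma~\ref{lem-ck2}(ii) which exhibits a nonzero nilpotent action of $c_{k2}-\gamma_{k2}(\bar{v}+z)$ on $\mathcal{D}T(\bar{v}+z)$.
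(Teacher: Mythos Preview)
Your proposal is correct and follows essentially the same strategy as the paper: downward induction on $m=|z_{ki}-z_{kj}|$ via Lemma~\ref{lem-connect}, commutation of $c_{rs}$ (for $r\ge k+1$) with $E_{k+1,k-t}$, and separation of the $R_m$-contributions by their distinct $\Gamma_{k-1}$-characters using Remark~\ref{elements in Rm} and Lemma~\ref{Gamma k separates tableaux}. The only organizational difference is that the paper first packages everything into the Gelfand--Tsetlin submodule $W=\sum_{z\in R_{\ge n}}U\cdot T(\bar v+z)$ (invoking Lemma~\ref{lem-cyclic-Gelfand-Tsetlin}) and then treats the derivative tableaux in the quotient $\overline W=V(T(\bar v))/W$, where the images $\mathcal{D}T(\bar v+z)+W$ become genuine $\Gamma$-eigenvectors---you will find this quotient device the cleanest way to execute your ``same bookkeeping'' for part~(ii).
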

\begin{proof}
Let $R_{\geq n} := \cup_{m\geq n} R_m$. For any $z\in R_{\geq n}$ consider the submodule $W_{z}$ of $V(T(\bar{v}))$ generated by $T(\bar{v}+z)$. By Lemma \ref{generators of Gamma acting in many tableaux}(i)(a), $T(\bar{v}+z)$ is a common eigenvector of all generators of $\Gamma$ and thus $W_{z}$ is a Gelfand-Tsetlin module by Lemma \ref{lem-cyclic-Gelfand-Tsetlin}. Then $W=\sum_{z\in R_{\geq n}}W_{z}$  is also a Gelfand-Tsetlin module. We first show that $W$ contains all tableau $T(\bar{v}+z)$ for any $z\in\mathbb{Z}^{\frac{n(n-1)}{2}}$.
   Indeed, assume that $|z_{ki}-z_{kj}|= n-1$ and consider $T(\bar{v}+z)$. Then, by Lemma \ref{lem-connect} there exists $z'\in R_n$ and a nonzero $x\in \gl_{k+1}$ such that  $xT(\bar{v}+z')=\sum_{t=0}^N a_t T(\bar{v}+z^{(t)})$, where $a_t\in \mathbb C$, $z^{(0)}=z$ and, $|z_{ki}^t-z_{kj}^{(t)}|\geq n-1$.

Following Remark \ref{elements in Rm}, we assume that $N=1$ and $z^{(0)} , z^{(1)}\in R_{n-1}$ without loss of generality since $z^{(m)}$ in $R_{\geq n}$ implies $T(\bar{v}+z^{(m)})\in W$. The action of  all generators $\{c_{rs}\}_{1\leq r \leq s\leq n-1}$ of $\Gamma$, except for the center of $U$, on  $T(\bar{v}+z)$ and $T(\bar{v}+z^{(1)})$ is determined by Lemma \ref{generators of Gamma acting in many tableaux}. Let $c\in \Gamma$ be a central element and $(c-\gamma)T(\bar{v}+z')=0$ for some complex $\gamma$. Then $(c-\gamma)xT(\bar{v}+z')=0$ $=(c-\gamma) (a_0 T(\bar{v}+z)+ a_1 T(\bar{v}+z^{(1)}))$.  Recall that by Lemma \ref{Gamma k separates tableaux}, there exists $C\in \Gamma_{k-1}$ which  acts with different scalars on $T(\bar{v}+z)$ and $T(\bar{v}+z^{(1)})$. Since $C$ commutes with $(c-\gamma)$, both $T(\bar{v}+z)$ and $T(\bar{v}+z^{(1)})$ are in $W$. Moreover,
  $(c-\gamma)T(\bar{v}+z)=$ $(c-\gamma)T(\bar{v}+z^{(1)})=0$. Hence, the action of $\Gamma$ on any $T(\bar{v}+z)$ with $z\in R_{n-1}$ is as in Lemma \ref{generators of Gamma acting in many tableaux}. Moreover, $T(\bar{v}+z)\in W$ for any $z\in R_{n-1}$. 
  Next we consider a tableau $T(\bar{v}+z)$ with $z\in R_{n-2}$. Again 
  by Lemma~\ref{lem-connect} one finds a nonzero $y\in \gl_{k+1}$ and
  $z'\in R_{n-1}$ such that  $y T(\bar{v}+z')$ contains $T(\bar{v}+z)$ and at most one other tableau. For all generators of centers of $U(\gl_m))$, $m\leq n-2$ the statement follows from Lemma \ref{generators of Gamma acting in many tableaux}. 
  If $c$ is in the center of $U$ or in the center of $U(\gl_{n-1})$ then it commutes with $y$.  Choose $C\in \Gamma_{k-1}$ which separates the tableaux in the image $y T(\bar{v}+z')$ and which acts by a scalar on the tableau 
  $T(\bar{v}+z')$. Applying the argument above we conclude that the action of $\Gamma$ on any $T(\bar{v}+z)$ with $z\in R_{n-2}$ is determined  by Lemma \ref{generators of Gamma acting in many tableaux} and $T(\bar{v}+z)\in W$ for any $z\in R_{n-2}$. 
 Continuing analogously with the sets 
  $R_{n-3}, \ldots, R_0$ we show that any tableau $T(\bar{v}+z)$  belongs to $W$. Note that when $z\in R_0$, $\tau(z)=z$. In this case it will be the unique such tableau coming from some  $T(\bar{v}+z')$ with $z'\in R_1$ and ``separation'' is not needed.   
  
  Consider the  quotient $\overline{W}=V(T(\bar{v}))/W$.
 The vector $\mathcal{D}T(\bar{v}+z))+W$ of $\overline{W}$ is a common eigenvector of $\Gamma$ by Lemma \ref{generators of Gamma acting in many tableaux}(i)(b) for any $z\in R_n$. We can repeat now the argument above substituting everywhere the tableaux 
  $T(\bar{v}+z)$ by $\mathcal{D}T(\bar{v}+z))$. Hence, $\overline{W}=\sum_{z\in R_n}\overline{W}_{z}$, where 
  $\overline{W}_{z}$ denotes the submodule of $\overline{W}$ generated by  $\mathcal{D}T(\bar{v}+z)) +W$. 
  By Lemma \ref{lem-cyclic-Gelfand-Tsetlin} we conclude that $\overline{W}$ is a Gelfand-Tsetlin module. Therefore, 
  $V(T(\bar{v}))$ is a Gelfand-Tsetlin module with the required action of the generators of $\Gamma$.  
\end{proof}

The above theorem implies the following.
\begin{corollary}\label{ck2 acts as jordan cell} For any $z, w\in\mathbb{Z}^{\frac{n(n-1)}{2}}$ with $w\neq\tau(w)$, the action of the generators of $\Gamma$ on $V(T(\bar{v}))$ is given by: 
\begin{align*}
c_{rs}(T(\bar{v}+z)&=\gamma_{rs}(\bar{v}+z)T(\bar{v}+z)\\
c_{rs}(\mathcal{D}T(\bar{v}+w)))&=\gamma_{rs}(\bar{v}+w)\mathcal{D}T(\bar{v}+w))+\mathcal{D}^{\overline{v}}(\gamma_{rs}(v+w))T(\bar{v}+w).
\end{align*}
Moreover, $\dim V(T(\bar{v}))(\sm)\leq 2$ for any $\sm\in\Sp\Ga$. 
\end{corollary}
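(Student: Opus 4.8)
The plan is to deduce Corollary \ref{ck2 acts as jordan cell} directly from Theorem \ref{GT module structure}, which already provides the two identities
$c_{rs}(T(\bar{v}+z))=\mathcal{D}^{\overline{v}}((x-y)c_{rs}(T(v+z)))$ and
$c_{rs}(\mathcal{D}T(\bar{v}+z))=\mathcal{D}^{\overline{v}}(c_{rs}(T(v+z)))$ for $z\neq\tau(z)$, and to combine them with the generic eigenvalue formula $c_{rs}(T(v+w))=\gamma_{rs}(v+w)T(v+w)$. First I would substitute the generic formula into the right-hand sides. For the first identity we get $\mathcal{D}^{\overline{v}}((x-y)\gamma_{rs}(v+z)T(v+z))$; since $(x-y)$ vanishes at $\bar{v}$, the derivation $\mathcal{D}^{\overline{v}}$ hits the factor $(x-y)$ and produces only $\gamma_{rs}(\bar{v}+z)T(\bar{v}+z)$, using that $\mathcal{D}^{\overline{v}}(x-y)=\tfrac12((\partial/\partial v_{ki}-\partial/\partial v_{kj})(v_{ki}-v_{kj}))(\bar v)=1$ and $\mathrm{ev}(\bar v)$ kills the $(x-y)$ term. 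This gives the first displayed line. For the second, applying $\mathcal{D}^{\overline{v}}$ to $\gamma_{rs}(v+w)T(v+w)$ via the Leibniz-type definition $\mathcal{D}^{\overline{v}}(fT(v+w))=\mathcal{D}^{\overline{v}}(f)T(\bar v+w)+f(\bar v)\mathcal{D}T(\bar v+w)$ with $f=\gamma_{rs}(v+w)$ yields exactly $\gamma_{rs}(\bar v+w)\mathcal{D}T(\bar v+w)+\mathcal{D}^{\overline{v}}(\gamma_{rs}(v+w))T(\bar v+w)$, which is the second displayed line.

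For the dimension bound, I would fix $\sm\in\Sp\Gamma$ and show that the span of the generators of $V(T(\bar v))$ lying in $V(T(\bar v))(\sm)$ is at most two-dimensional. From the formulas just derived, each regular tableau $T(\bar v+z)$ is a genuine $\Gamma$-eigenvector with eigenvalue the character $w\mapsto\gamma_{\bullet}(\bar v+z)$, and each derivative tableau $\mathcal{D}T(\bar v+w)$ spans, together with $T(\bar v+w)$, a two-dimensional $\Gamma$-invariant subspace on which each $c_{rs}$ acts by a $2\times 2$ Jordan-type block with the same eigenvalue $\gamma_{rs}(\bar v+w)$ on the diagonal. So a generalized eigenvector for the character $\sm$ must be a linear combination of those $T(\bar v+z)$ and $\mathcal{D}T(\bar v+w)$ whose associated character equals $\sm$. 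The key point is that the map $z\mapsto(\gamma_{rs}(\bar v+z))_{r,s}$ separates the relevant tableaux: this is precisely the content of Lemma \ref{Gamma k separates tableaux} for rows $1,\dots,k-1$ together with the fact that $\gamma_{k\bullet}$ for row $k$ distinguishes $T(\bar v+z)$ from $T(\bar v+z')$ unless $z$ and $z'$ agree or are related by $\tau$ in row $k$ — and if $z=\tau(z')$ in row $k$ while agreeing elsewhere, then $T(\bar v+z)=T(\bar v+z')$ in $V(T(\bar v))$ anyway. Hence at most one regular tableau $T(\bar v+z)$ (up to the $\tau$-identification) and at most one derivative tableau $\mathcal{D}T(\bar v+w)$ can share a given character $\sm$, and these $\mathcal{D}T(\bar v+w)$ is only nonzero-generalized-eigenvector-contributing when $w\neq\tau(w)$. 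Therefore $\dim V(T(\bar v))(\sm)\leq 2$.

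The main obstacle I anticipate is the bookkeeping in the separation argument: one must be careful that the character attached to a derivative tableau $\mathcal{D}T(\bar v+w)$ is the \emph{same} as that attached to the regular tableau $T(\bar v+w)$ (they share the diagonal eigenvalue $\gamma_{rs}(\bar v+w)$), so a character $\sm$ can be hit both by a regular tableau and by a derivative tableau, and one must confirm these are the only two possibilities and that they indeed span a two-dimensional (not smaller, but the corollary only needs $\leq 2$) space. This requires checking that no two distinct derivative tableaux and no two distinct regular tableaux share $\sm$; the regular case is Lemma \ref{Gamma k separates tableaux} plus a direct check on row $k$ using that $\gamma_{k2}$ is a nondegenerate quadratic symmetric polynomial, and the derivative case follows because $\mathcal{D}T(\bar v+w)$ and $\mathcal{D}T(\bar v+w')$ generate, modulo $W$, modules on which $\Gamma$ acts through the same separating characters as $T(\bar v+w)$, $T(\bar v+w')$ respectively. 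Once this is in place the corollary follows immediately; alternatively one can invoke Theorem \ref{thm-bound-dim} to bound $\dim V(T(\bar v))(\sm)$ by the number of $\phi\in\mathcal M$ with $\pi(\phi\ell_{\sm})=\sm$, which for a $1$-singular $\sm$ is exactly $2$, giving a cleaner but less self-contained route.
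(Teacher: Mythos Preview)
Your derivation of the two displayed formulas is correct and is precisely what the paper intends: it states only that ``the above theorem implies the following,'' and the computation you outline (substitute $c_{rs}(T(v+z))=\gamma_{rs}(v+z)T(v+z)$ into Theorem~\ref{GT module structure} and apply the definition $\mathcal{D}^{\bar v}(fT(v+z))=\mathcal{D}^{\bar v}(f)T(\bar v+z)+f(\bar v)\mathcal{D}T(\bar v+z)$ together with Lemma~\ref{dif operaator in functions}(iii)) is exactly the same calculation the paper carries out for the special case $c_{k2}$ in Lemma~\ref{lem-ck2}. Your argument for the multiplicity bound is also correct: the formulas show each $T(\bar v+z)$ is a $\Gamma$-eigenvector and each $\mathcal{D}T(\bar v+w)$ a generalized eigenvector with the same character as $T(\bar v+w)$; since $\gamma_{r1},\dots,\gamma_{rr}$ generate the symmetric polynomials in row $r$, equal characters force the rows of $\bar v+z$ and $\bar v+z'$ to be permutations of one another, and $1$-singularity of $\bar v$ forces $z'\in\{z,\tau(z)\}$, so up to the identifications $T(\bar v+z)=T(\bar v+\tau(z))$ and $\mathcal{D}T(\bar v+w)=-\mathcal{D}T(\bar v+\tau(w))$ there is at most one basis vector of each type in a given generalized eigenspace.

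One small caveat: your proposed shortcut via Theorem~\ref{thm-bound-dim} does not apply directly, since that theorem assumes the module is generated by a single $\Gamma$-eigenvector, and $V(T(\bar v))$ is not established to be cyclic in this sense (Lemma~\ref{the module is generated by two elements} gives two generators, and the single-generator result Lemma~\ref{when a derivative tableau generates} uses a derivative tableau, which is not a $\Gamma$-eigenvector). Your direct argument from the explicit formulas is the right one here.
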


\subsection{Example} Let $n=3$, $a$ any complex number, $\bar{v}=(a,a,a|a,a|a)$ and $z=(m,n,k)\in\mathbb{Z}^{3}$. In this case the module $V(T(\bar{v}))$ has length $10$. 
To describe the irreducible subquotients  of $V(T(\bar{v}))$ we use the notation
$$
L(D) = \Span \{ P(z):z\in D\}
$$
for a set of inequalities $D$, where 

\begin{center}
$P(z):=\begin{cases}
 T(\bar{v}+z)&
 \text{ if } m\leq n\\
 \mathcal{D}T(\bar{v}+z)&
 \text{ if } m> n\\
 \end{cases}$
\end{center}

The  irreducible subquotients  of $V(T(\bar{v}))$ can be separated into two groups:

\begin{enumerate}[(i)]

\item Eight irreducible Verma modules. 

{\scriptsize $$L_{1}=L\left( \begin{cases}
m\leq n\\
n\leq 0\\
k\leq n
\end{cases}\bigcup
\begin{cases}
m> n\\
m\leq 0\\
k\leq n
\end{cases}\right); L_{3}=L\left( \begin{cases}
m\leq n\\
n\leq 0\\
k>n
\end{cases}\bigcup
\begin{cases}
m> n\\
m\leq 0\\
k>n
\end{cases}\right);$$

$$L_{5}=L\left(\begin{split}
m\leq n\\
m\leq 0\\
n>0\\
k\leq m
\end{split}\right)\cong L\left(
\begin{split}
m> n\\
n\leq 0\\
m>0\\
k\leq n
\end{split}\right)=L'_{5}$$}
{\scriptsize $$L_{2}=L\left( \begin{cases}
m\leq n\\
m> 0\\
k>n
\end{cases}\bigcup
\begin{cases}
m> n\\
n> 0\\
k>n
\end{cases}\right);\text{   } L_{4}=L\left( \begin{cases}
m\leq n\\
m> 0\\
k\leq n
\end{cases}\bigcup
\begin{cases}
m> n\\
n>0\\
k\leq n
\end{cases}\right);$$

$$L_{6}=L\left(\begin{split}
m\leq n\\
m\leq 0\\
n>0\\
k> n
\end{split}\right)\cong L\left(
\begin{split}
m> n\\
n\leq 0\\
m>0\\
k>m
\end{split}\right)=L'_{6}$$}

\item Two isomorphic modules with infinite dimensional weight spaces:

{\scriptsize $$L_{7}=L\left(\begin{split}
m\leq n\ \ \ \\
m\leq 0\ \ \ \\
n>0\ \ \ \\
m<k\leq n
\end{split}
\right)\cong L\left(\begin{split}
m> n\ \ \ \\
n\leq 0\ \ \ \\
m>0\ \ \ \\
n<k\leq m
\end{split}
\right)=L'_{7}$$}
\end{enumerate}

The Loewy decomposition of $V(T(\bar{v}))$ has successive components:
$$L_{1} ,\  L_{3}\oplus L_{5} ,\  L_{7} ,\  L'_{5}\oplus L_{6} ,\  L'_{7} ,\  L_{4}\oplus L'_{6} ,\  L_{2}$$
where $L_1$ is  the socle of  $V(T(\bar{v}))$. 

\begin{remark}An interesting observation is that the module $L_7$ (and hence $L_7'$) can be described in terms of the localization functor $D_{21}$ with respect to the Ore subset $\{ E_{21}^k\; | \; k \in {\mathbb Z}_{\geq 0}\} $ of $U$ (we refer the reader to  \cite{Deo} for a definition and properties of the functor $D_{21}$). More precisely we have the following exact sequence
$$
 0 \to L_7 \to (D_{21}L_5)/L_5 \to L_6 \to 0.
$$
In fact, as we prove in  \cite{FGR}, every irreducible Gelfand-Tsetlin $\mathfrak{gl} (3)$-module can be obtained as a subquotient of a localized Verma module.
\end{remark}


\section{Irreducibility of $V(T(\bar{v}))$}

We introduce the following notation which will be used in this section only.
\begin{eqnarray*}
L &:=& \{T(\bar{v}+z): z\neq\tau(z)\},\\
S& :=& \{T(\bar{v}+z): z=\tau(z)\},\\
D& := &\{\mathcal{D}T(\bar{v}+z): z\neq\tau(z)\}.
\end{eqnarray*}

\begin{lemma}\label{the module is generated by two elements}
Let $z\in\mathbb{Z}^{\frac{n(n-1)}{2}}$ such that $z\neq\tau(z)$ and $w=\bar{v}+z$. If $w_{rs}-w_{r-1,t}\notin \mathbb{Z}_{\geq 0}$ for any $r,s,t$, then the module $V(T(\bar{v}))$ is generated by the two tableau $T(\bar{v}+z)$ and $\mathcal{D}T(\bar{v}+z))$.
\end{lemma}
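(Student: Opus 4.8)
The plan is to follow the same strategy used in Theorem~\ref{GT module structure} (and in the generic case, Theorem~\ref{Basis for irreducible generic modules gl(n)}), namely to show that starting from the ``lowest'' tableaux $T(\bar v+z)$ and $\mathcal{D}T(\bar v+z)$ one can reach every generator $T(\bar v+z')$ and $\mathcal{D}T(\bar v+z')$ by repeatedly applying raising operators $E_{\ell,\ell+1}$, while using the action of $\Gamma$ to ``separate'' the summands that appear. First I would set $w=\bar v+z$ and observe that the hypothesis $w_{rs}-w_{r-1,t}\notin\mathbb{Z}_{\geq 0}$ for all $r,s,t$ is exactly the condition (compare $\Omega^{+}$ in Definition~\ref{definition of Omega +}) guaranteeing that in the generic module $V(T(v))$ the tableau $T(v+z)$ generates the whole of $V(T(v))$, i.e. $\mathcal N([L])=\mathcal I([L])=V([T])$; concretely, none of the denominators obstructs any raising step, so every $T(v+z')$ with $z'-z\in\mathbb{Z}^{\frac{n(n-1)}{2}}$ lies in $U(E_{\ell,\ell+1})\cdot T(v+z)$.

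Next I would transport this to $V(T(\bar v))$. Let $M$ be the submodule generated by $T(\bar v+z)$ and $\mathcal{D}T(\bar v+z)$. Using the defining formulas $E_{rs}(T(\bar v+z))=\mathcal{D}^{\bar v}((x-y)E_{rs}(T(v+z)))$ and $E_{rs}(\mathcal{D}T(\bar v+z))=\mathcal{D}^{\bar v}(E_{rs}(T(v+z)))$ together with Lemma~\ref{dij-commute}, applying a word in the $E_{\ell,\ell+1}$ to these two generators produces $\mathcal{D}^{\bar v}$ applied to the corresponding word acting in $\mathcal{F}_{ij}\otimes V(T(v))$; by the generic statement that word-image is a linear combination $\sum_{t} f_t\, T(v+z^{(t)})$ over all relevant $z^{(t)}$ with $f_t$ rational functions, so its image under $\mathcal{D}^{\bar v}$ is $\sum_t\big(f_t(\bar v)\,\mathcal{D}T(\bar v+z^{(t)})+\mathcal{D}^{\bar v}(f_t)\,T(\bar v+z^{(t)})\big)$. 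Thus $M$ contains a fixed spanning set of such mixed combinations; what remains is to disentangle them into the individual $T(\bar v+z^{(t)})$ and $\mathcal{D}T(\bar v+z^{(t)})$. This is where the Gelfand-Tsetlin subalgebra enters: by Corollary~\ref{ck2 acts as jordan cell} the generators $c_{rs}$ act on $T(\bar v+z^{(t)})$ as the scalar $\gamma_{rs}(\bar v+z^{(t)})$ and on $\mathcal{D}T(\bar v+z^{(t)})$ with the same scalar plus a nilpotent part, and by Lemma~\ref{Gamma k separates tableaux} the subalgebra $\Gamma_{k-1}$ separates tableaux whose lower rows differ; since distinct $z^{(t)}$ appearing in a single raising step differ in lower rows (Remark~\ref{elements in Rm}) or differ in $|z_{ki}-z_{kj}|$, a Vandermonde/partial-fractions argument on the commuting semisimple part lets us project onto each isotypic component, hence $T(\bar v+z^{(t)})\in M$ for every $t$ and, peeling off the semisimple part, also $\mathcal{D}T(\bar v+z^{(t)})\in M$.

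Then I would run an induction on $|z'_{ki}-z'_{kj}|$ in the style of the proof of Theorem~\ref{GT module structure}, but going outward and inward from the chosen $z$: every $T(\bar v+z')$ (and $\mathcal{D}T(\bar v+z')$ when $z'\neq\tau(z')$) is reached from a neighbour by one application of some $E_{\ell,\ell+1}$, the generic non-vanishing of the relevant coefficient (guaranteed by the $\Omega^{+}$-type hypothesis, lifted to $\bar v$ because the only degeneracy of $\bar v$ is on the $(k,i),(k,j)$ pair and $z'\neq\tau(z')$ keeps those entries distinct) ensures the target appears with nonzero coefficient, and the separation argument above extracts it. Since the set $\{T(\bar v+z'),\ \mathcal{D}T(\bar v+z'): z'\neq\tau(z')\}\cup\{T(\bar v+z'):z'=\tau(z')\}$ is a spanning set of $V(T(\bar v))$ (recall the basis exhibited after the definition of $V(T(\bar v))$, together with the relation $\mathcal{D}T(\bar v+z')+\mathcal{D}T(\bar v+\tau(z'))=0$), this shows $M=V(T(\bar v))$. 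The main obstacle I anticipate is the bookkeeping in the separation step: one must check that in each single raising step the tableaux $T(\bar v+z^{(t)})$ that land in the \emph{same} stratum $R_m$ genuinely have pairwise distinct $\Gamma_{k-1}$-characters (so Lemma~\ref{Gamma k separates tableaux} applies), and that the $\mathcal{D}$-summands, which share the character of their underlying $T$, can still be isolated using the nilpotent part of $c_{k2}$ as in Lemma~\ref{lem-ck2}; handling the boundary case $z'=\tau(z')$ (where the derivative tableau vanishes and no separation is needed, exactly as noted at the end of the proof of Theorem~\ref{GT module structure}) is a minor but necessary wrinkle.
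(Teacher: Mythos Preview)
Your strategy is in the right direction and could be made to work, but it is considerably more elaborate than the paper's argument, and you miss the structural shortcut that makes the proof nearly immediate. The paper does \emph{not} intertwine the two generators and does \emph{not} invoke the stratification/$\Gamma$-separation machinery of Theorem~\ref{GT module structure}. Instead it separates the roles cleanly: first $T(\bar v+z)$ alone already generates all ordinary tableaux $L\cup S$, and then, working modulo those, $\mathcal{D}T(\bar v+z)$ generates all of $D$. For the first step the point is that, by Lemma~\ref{dij-commute}(i), whenever $z'\neq\tau(z')$ the action $E_{rs}(T(\bar v+z'))$ is literally the evaluated classical Gelfand--Tsetlin formula $\sum_\sigma e_{rs}(\sigma(\bar v+z'))\,T(\bar v+z'+\sigma(\varepsilon_{rs}))$ with no derivative tableaux appearing; since the hypothesis gives $\Omega^{+}(T(\bar v+z))=\emptyset$ (this you correctly observe), Theorem~\ref{Basis for irreducible generic modules gl(n)}(i) applies verbatim. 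For the second step, one reads off from the definition of $E_{rs}(\mathcal{D}T(\bar v+w))$ that the coefficient of $\mathcal{D}T(\bar v+w+\sigma(\varepsilon_{rs}))$ is again $e_{rs}(\sigma(\bar v+w))$, so modulo $L\cup S$ the action on $D$ is governed by the same classical coefficients and Theorem~\ref{Basis for irreducible generic modules gl(n)} applies a second time.

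What the paper's route buys is that the generic-case theorem does all the combinatorial and separation work; no bespoke induction on $|z'_{ki}-z'_{kj}|$ or Vandermonde projection is needed. Your mixed approach also has a loose end: the claim that a word in the $E_{rs}$ applied to $\mathcal{D}T(\bar v+z)$ equals $\mathcal{D}^{\bar v}$ of the same word applied in $\mathcal{F}_{ij}\otimes V(T(v))$ relies on Lemma~\ref{dij-commute}(ii), whose hypothesis (that each intermediate vector stays in $\mathcal{F}_{ij}\otimes\mathcal{V}_{\rm gen}$) can fail once a step lands in $\overline{\Phi}_{rs}$, so you would have to route around this. Finally, note that raising operators $E_{\ell,\ell+1}$ alone only increase entries; you also need the $E_{\ell+1,\ell}$.
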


\begin{proof}
The action of $\gl(n)$ on the elements from $L$  is given by  the classical Gelfand-Tsetlin formulas (see Proposition \ref{coefficients e_ij}), and the conditions $w_{rs}-w_{r-1,t}\notin \mathbb{Z}_{\geq 0}$ imply that $\Omega^{+}(T(\bar{v}+z))=\emptyset$ (see  Definition \ref{definition of Omega +}). Hence, by Theorem \ref{Basis for irreducible generic modules gl(n)}(i), the submodule  generated  by $T(\bar{v}+z)$ contains  $L\bigcup S$. Now, given any $w\neq\tau(w)$ and $\mathcal{D}T(\bar{v}+w) \in D$,  we have $E_{rs}(\mathcal{D}T(\bar{v}+w))=\sum_{\sigma\in\Phi_{rs}}\mathcal{D}^{\overline{v}} (e_{rs}(\sigma(v+w))T(\bar{v}+w+\sigma(\varepsilon_{rs}))+\sum_{\sigma\in\Phi_{rs}}(e_{rs}(\sigma(\bar{v}+w))\mathcal{D}T(\bar{v}+w+\sigma(\varepsilon_{rs}))$. The condition  $w_{rs}-w_{r-1,t}\notin \mathbb{Z}_{\geq 0}$ for any $r,s,t$ implies again that $\Omega^{+}(\mathcal{D}T(\bar{v}+z))=\emptyset$. Thus, by Theorem \ref{Basis for irreducible generic modules gl(n)}, the submodule generated by the tableau $\mathcal{D}T(\bar{v}+z)$  contains  $D$. 
\end{proof}

\begin{lemma}\label{when a derivative tableau generates}
Let $z\in\mathbb{Z}^{\frac{n(n-1)}{2}}$ such that $z\neq\tau(z)$ and $w=\bar{v}+z$. If $w_{rs}-w_{r-1,t}\notin \mathbb{Z}_{\geq -1}$ for any $r,s,t$, then $V(T(\bar{v}))$ is generated by $\mathcal{D}T(\bar{v}+z))$.
\end{lemma}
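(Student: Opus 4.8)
The plan is to show that the derivative tableau $\mathcal{D}T(\bar{v}+z)$ generates all of $V(T(\bar{v}))$ by first producing enough derivative tableaux to reach the submodule spanned by $D$, and then showing that the ``mixed'' term in the Gelfand-Tsetlin action of the $E_{rs}$ on derivative tableaux forces a genuine (non-derivative) tableau into the submodule, from which Lemma \ref{the module is generated by two elements} finishes the job.

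\emph{Step 1: reaching $D$.} With $w = \bar v + z$ and the hypothesis $w_{rs} - w_{r-1,t} \notin \mathbb{Z}_{\geq -1}$ for all $r,s,t$, in particular $w_{rs} - w_{r-1,t} \notin \mathbb{Z}_{\geq 0}$, so $\Omega^{+}(\mathcal{D}T(\bar v + z)) = \emptyset$ exactly as in the proof of Lemma \ref{when a derivative tableau generates}'s predecessor. Applying the formula
$$
E_{rs}(\mathcal{D}T(\bar{v}+w)) = \sum_{\sigma\in\Phi_{rs}}\mathcal{D}^{\overline{v}} (e_{rs}(\sigma(v+w))) T(\bar{v}+w+\sigma(\varepsilon_{rs})) + \sum_{\sigma\in\Phi_{rs}} e_{rs}(\sigma(\bar{v}+w)) \mathcal{D}T(\bar{v}+w+\sigma(\varepsilon_{rs}))
$$
repeatedly (lowering operators $E_{k+1,k}$ and raising operators $E_{k,k+1}$) and invoking the ``$\Omega^{+}=\emptyset$'' analogue of Theorem \ref{Basis for irreducible generic modules gl(n)}(i) for the derivative layer, the submodule generated by $\mathcal{D}T(\bar v + z)$ contains every $\mathcal{D}T(\bar v + z')$ with $z'\neq\tau(z')$; that is, it contains $D$. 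This is the same reasoning already used at the end of the proof of Lemma \ref{the module is generated by two elements}.

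\emph{Step 2: extracting a genuine tableau.} Here is where the stronger hypothesis $w_{rs}-w_{r-1,t} \notin \mathbb{Z}_{\geq -1}$ enters. I would pick a suitable pair $(r,s)=(k',k'+1)$ or $(k'+1,k')$ and a $\sigma$ so that the tableau $\bar{v}+w+\sigma(\varepsilon_{rs})$ lies on the singular hyperplane, i.e. its $(k,i)$ and $(k,j)$ entries coincide — equivalently $w + \sigma(\varepsilon_{rs})$ is $\tau$-fixed — so the corresponding derivative summand $\mathcal{D}T(\bar v + w + \sigma(\varepsilon_{rs}))$ vanishes, while the coefficient $\mathcal{D}^{\overline{v}}(e_{rs}(\sigma(v+w)))$ of the genuine tableau $T(\bar v + w + \sigma(\varepsilon_{rs}))$ in the first sum is nonzero. (The condition $\notin\mathbb{Z}_{\geq -1}$ is exactly what is needed to guarantee that such an $E_{rs}$ does not annihilate the relevant summand while still allowing a neighboring entry to become equal — a $\mathbb{Z}_{\geq -1}$ obstruction is precisely the situation where $w_{rs}-w_{r-1,t}=-1$ would make a denominator-type coefficient behave badly.) Then $E_{rs}(\mathcal{D}T(\bar v + w))$, modulo the already-obtained subspace $D$, equals a nonzero multiple of a genuine tableau $T(\bar v + u)$ with $u = \tau(u)$ — an element of $S$. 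Hence the submodule generated by $\mathcal{D}T(\bar v + z)$ contains some element of $S$.

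\emph{Step 3: closing up.} From a single tableau in $S$ (or, after one more application of a raising/lowering operator, a tableau $T(\bar v + z'')$ with $z'' \neq \tau(z'')$ and again $\Omega^{+}(T(\bar v + z'')) = \emptyset$, which is automatic since all relevant differences are still non-integers or are controlled by the hypothesis), Theorem \ref{Basis for irreducible generic modules gl(n)}(i) gives that the submodule contains $L \cup S$. Now the submodule contains both $L\cup S$ and $D$, hence it is all of $V(T(\bar v))$. Alternatively, once we have an element of $L$ with empty $\Omega^{+}$ together with the corresponding derivative tableau in $D$, Lemma \ref{the module is generated by two elements} applies directly.

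\emph{Main obstacle.} The delicate point is Step 2: one must exhibit a concrete $E_{rs}$ and a concrete $\sigma\in\Phi_{rs}$ for which (a) the target tableau lands on the singular hyperplane so its derivative partner drops out, (b) the derivative of the coefficient, $\mathcal{D}^{\overline{v}}(e_{rs}(\sigma(v+w)))$, is genuinely nonzero at $\bar v$, and (c) no spurious vanishing occurs because of a non-standardness/pole condition — and this is exactly what the hypothesis $w_{rs} - w_{r-1,t}\notin\mathbb{Z}_{\geq -1}$ is designed to rule out. Verifying (b) amounts to checking that the relevant factor of $e_{rs}$ has a \emph{simple} zero (not higher order, and not a removable cancellation) along the hyperplane $x=y$, which is where the $1$-singular structure and the normalization $\mathcal{D}^{\overline v}(f) = \tfrac12(\partial_{v_{ki}} - \partial_{v_{kj}})f(\bar v)$ are used; I expect this to be a short but careful computation with the explicit product formulas for $e_{k,k+1}$ and $e_{k+1,k}$ from Definition \ref{definition of coefficients e_rs}.
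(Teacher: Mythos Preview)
Your overall architecture is right---first get all of $D$, then extract a genuine tableau, then invoke Lemma~\ref{the module is generated by two elements}---and Step~1 matches the paper. The gap is in Step~2, and it stems from a misreading of what the hypothesis $w_{rs}-w_{r-1,t}\notin\mathbb{Z}_{\geq -1}$ is for.

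You try to land on the singular hyperplane (an element of $S$), but that requires moving an entry in row $k$, and a single application of $E_{k,k+1}$ or $E_{k+1,k}$ only changes $z_{ki}$ or $z_{kj}$ by $\pm 1$. So your construction only reaches $S$ from the original $z$ when $|z_{ki}-z_{kj}|=1$. If you instead first roam around in $D$ (as Step~1 allows) to reach some $z'$ with $|z'_{ki}-z'_{kj}|=1$, you have no control over $\Omega^{+}$ at $z'$, so the ``automatic'' claim in Step~3 fails: the lemma's hypothesis is on $w=\bar v+z$, not on arbitrary $\bar v+z'$.

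The paper avoids all of this by applying $E_{k-1,k}$ directly to $\mathcal{D}T(\bar v+z)$. Since $\Phi_{k-1,k}=\widetilde S_{k-1}$ only touches row $k-1$, the resulting tableau $T(\bar v+z+\delta^{k-1,1})$ stays in $L$, not $S$. The coefficient in front of it is
\[
\mathcal{D}^{\bar v}\bigl(e_{k-1,k}(v+z)\bigr)=-\tfrac12\,(z_{kj}-z_{ki})\,
\frac{\prod_{t\neq i,j}\bigl((\bar v+z)_{k-1,1}-(\bar v+z)_{k,t}\bigr)}
{\prod_{t\neq 1}\bigl((\bar v+z)_{k-1,1}-(\bar v+z)_{k-1,t}\bigr)}\neq 0,
\]
nonzero because $z_{ki}\neq z_{kj}$ and because $w_{k,t}-w_{k-1,1}\notin\mathbb{Z}_{\geq -1}$ forces each numerator factor to be nonzero. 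Since $D$ is already in the submodule, the $\mathcal{D}T$-summands can be subtracted off and the $T$-tableaux separated by their $\Gamma$-characters, so $T(\bar v+z+\delta^{k-1,1})$ lies in the submodule. The point of the ``$-1$'' buffer is then exactly this: after the single shift $z\mapsto z+\delta^{k-1,1}$, one still has $(\bar v+z+\delta^{k-1,1})_{rs}-(\bar v+z+\delta^{k-1,1})_{r-1,t}\notin\mathbb{Z}_{\geq 0}$, so the new tableau satisfies the hypothesis of Lemma~\ref{the module is generated by two elements} and generates $L\cup S$. Your account of the hypothesis (``denominator-type coefficient behaves badly'') is not what is happening.
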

\begin{proof}

By Lemma \ref{the module is generated by two elements} the module $V(T(\bar{v}))$ is generated by $\mathcal{D}T(\bar{v}+z))$ and any tableau of the form $T(\bar{w})$ such that $\bar{w}_{rs}-\bar{w}_{r-1,t}\notin \mathbb{Z}_{\geq 0}$ for any $r,s,t$. Thus, it is enough to prove that we can generate one such $T(\bar{w})$ from $\mathcal{D}T(\bar{v}+z))$. To prove  this recall $E_{k-1,k}(\mathcal{D}T(\bar{v}+z))=\sum_{\sigma\in\Phi_{k-1,k}}\mathcal{D}^{\overline{v}}(e_{k-1,k}(\sigma(v+z))T(\bar{v}+z+\sigma(\varepsilon_{k-1,k}))+\sum_{\sigma\in\Phi_{k-1,k}}{\rm ev}(\bar{v})(e_{k-1,k}(\sigma(v+z)))\mathcal{D}T(\bar{v}+z+\sigma(\varepsilon_{k-1,k}))$. But since
$$\mathcal{D}^{\overline{v}}(e_{k-1,k}(v+z))=-\frac{1}{2}\left(\frac{(z_{kj}-z_{ki})\prod_{t\neq i,j}^{k}((\bar{v}+z)_{k-1,1}-(\bar{v}+z)_{k,t})}{\prod_{t\neq 1}^{k-1}((\bar{v}+z)_{k-1,1}-(\bar{v}+z)_{k-1,t})}\right)\neq 0,
$$
the tableau $T(\bar{v}+z+\varepsilon_{k-1,k})$ satisfies the desired conditions.
\end{proof}

\begin{lemma}\label{when a non derivative tableau generates}
If $z\neq\tau(z)$ and $\bar{v}_{rs}-\bar{v}_{r-1,t}\notin \mathbb{Z}$ for any $1\leq t <r\leq n$, $1\leq s\leq r$  then $T(\bar{v}+z)$ generates $V(T(\bar{v}))$.
\end{lemma}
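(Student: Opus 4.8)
Here is the plan. Write $M:=U\,T(\bar v+z)$; the goal is $M=V(T(\bar v))$. Since $\bar v_{rs}-\bar v_{r-1,t}\notin\mathbb Z$ for all admissible triples, the same holds after replacing $\bar v$ by $\bar v+w$ for every $w\in\mathbb Z^{\frac{n(n-1)}2}$, so $\Omega^{+}(T(\bar v+w))=\Omega^{+}(\mathcal D T(\bar v+w))=\emptyset$ for all $w$ and the hypothesis of Lemma~\ref{when a derivative tableau generates} holds at every point of $\bar v+\mathbb Z^{\frac{n(n-1)}2}$. Consequently any derivative tableau $\mathcal D T(\bar v+w)$ with $w\neq\tau(w)$ generates $V(T(\bar v))$, and so it suffices to place one such tableau inside $M$.

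First I would record two facts. For $w\neq\tau(w)$ the coefficients $e_{rs}(\sigma(\bar v+w))$ of Proposition~\ref{coefficients e_ij} are finite -- their only possible pole, along $v_{ki}=v_{kj}$, is avoided because $(\bar v+w)_{ki}\neq(\bar v+w)_{kj}$ -- so $\gl(n)$ acts on $T(\bar v+w)$ by the plain generic Gelfand-Tsetlin formulas and produces only regular tableaux; moreover these coefficients are nonzero, since every numerator factor $(\bar v+w)_{rs}-(\bar v+w)_{r-1,t}$ is nonzero and the unique vanishing difference inside a row of $\bar v+w$, namely $(\bar v+w)_{ki}-(\bar v+w)_{kj}$, occurs only for $w=\tau(w)$. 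Hence a derivative tableau can enter $M$ only by applying $E_{k,k+1}$ or $E_{k+1,k}$ to an $S$-type tableau $T(\bar v+z_{0})$, $z_{0}=\tau(z_{0})$. Since $M$ is cyclic generated by the $\Gamma$-eigenvector $T(\bar v+z)$ it is a Gelfand-Tsetlin module (Lemma~\ref{lem-cyclic-Gelfand-Tsetlin}), so each of its Gelfand-Tsetlin character components lies in $M$; this lets me isolate individual summands of an $E_{rs}$-image, because tableaux obtained by different integer shifts of a row in which $\bar v$ has noninteger differences carry distinct Gelfand-Tsetlin characters. Using this, and after interchanging $z$ with $\tau(z)$ so that $z_{ki}>z_{kj}$, I would repeatedly apply $E_{k+1,k}$ (lowering the $i$-th entry of row $k$, which keeps every summand regular with nonzero coefficient as long as $z_{ki}>z_{kj}+1$) and isolate the desired summand, arriving at a regular tableau $T(\bar v+z')\in M$ with $z'_{ki}=z'_{kj}+1$.

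Now set $z_{0}:=z'-\delta^{ki}$, so $z_{0}=\tau(z_{0})$. In $E_{k+1,k}(T(\bar v+z'))=\sum_{p=1}^{k}e_{k+1,k}((1,p)(\bar v+z'))\,T(\bar v+z'-\delta^{kp})$ only the $p=i$ summand is an $S$-tableau, equal to $c\,T(\bar v+z_{0})$ with $c=e_{k+1,k}((1,i)(\bar v+z'))\neq0$; all others are regular. Applying $E_{k,k+1}$, and using that it produces only regular tableaux out of regular ones while the derivative part of $E_{k,k+1}(T(\bar v+z_{0}))$ comes only from $\sigma[k]\in\{(1,i),(1,j)\}$, one finds that $E_{k,k+1}E_{k+1,k}(T(\bar v+z'))\in M$ equals a combination of regular tableaux plus $2c\,r_{i}\,\mathcal D T(\bar v+z')$, where
$$\bigl((x-y)e_{k,k+1}((1,i)(v+z_{0}))\bigr)(\bar v)-\bigl((x-y)e_{k,k+1}((1,j)(v+z_{0}))\bigr)(\bar v)=2r_{i},\qquad r_{i}=-\frac{\prod_{a=1}^{k+1}\bigl((\bar v+z_{0})_{ki}-(\bar v+z_{0})_{k+1,a}\bigr)}{\prod_{a\neq i,j}^{k}\bigl((\bar v+z_{0})_{ki}-(\bar v+z_{0})_{ka}\bigr)};$$
here the two evaluations equal $r_{i}$ and $-r_{i}$ because $(\bar v+z_{0})_{ki}=(\bar v+z_{0})_{kj}$ and $\mathcal D T(\bar v+z_{0}+\delta^{kj})=-\mathcal D T(\bar v+z_{0}+\delta^{ki})=-\mathcal D T(\bar v+z')$, so they reinforce rather than cancel, and $r_{i}\neq0$ since every factor in it is nonzero. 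Finally, all the regular tableaux appearing here differ from $z'$ only in row $k$, and the only one of them carrying the Gelfand-Tsetlin character of $\mathcal D T(\bar v+z')$ is $T(\bar v+z')$ itself (again because the row-$k$ differences of $\bar v$ are noninteger off the $(i,j)$-pair); projecting onto that character component of $M$ and subtracting a multiple of $T(\bar v+z')\in M$ yields $\mathcal D T(\bar v+z')\in M$, hence $M=V(T(\bar v))$.

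The \textbf{main obstacle} I expect is the residue computation of the last paragraph: one must verify that the contributions of $\sigma[k]=(1,i)$ and $\sigma[k]=(1,j)$ add up to $2r_{i}$ rather than cancel, and that $r_{i}\neq0$. The character-separation book-keeping in the previous paragraph is routine but must be handled with a little care in the borderline configuration $z_{ki}=z_{kj}+1$, where one of the $E_{k+1,k}$-summands lands in $S$.
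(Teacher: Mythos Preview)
Your argument is correct and follows the same strategy as the paper: reach an $S$-tableau, apply a generator $E_{k\pm1,k\mp1}$ crossing row $k$, and verify that the resulting derivative tableau appears with a nonzero residue; then invoke the fact that a single derivative tableau generates the rest. The paper's execution is a bit more economical: after observing (via Lemma~\ref{lem-ck2}) that it suffices to make each basis vector appear with nonzero coefficient in some $g\cdot T(\bar v+z)$, it first places all of $L\cup S$ in the submodule (your hypothesis gives $\Omega^+ = \emptyset$, so Theorem~\ref{Basis for irreducible generic modules gl(n)}(i) applies), then applies a single $E_{k+1,k}$ directly to an $S$-tableau $T(\bar v+z_0)$ to produce $\mathcal D T(\bar v+z_0-\delta^{ki})$ with coefficient $\prod_{m=1}^{k-1}\bigl((\bar v+z_0)_{ki}-(\bar v+z_0)_{k-1,m}\bigr)\big/\prod_{t\neq i,j}(\cdots)\neq0$, and finally generates all of $D$. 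Your route via $E_{k,k+1}E_{k+1,k}(T(\bar v+z'))$ works, but you could shorten it by first isolating $T(\bar v+z_0)\in M$ (its Gelfand--Tsetlin character is distinct from those of the other summands of $E_{k+1,k}T(\bar v+z')$) and then applying a single $E_{k,k+1}$ or $E_{k+1,k}$ to it; this avoids the composite computation and the final subtraction of $T(\bar v+z')$.
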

\begin{proof}

As $V(T(\bar{v}))$ is a Gelfand-Tsetlin module, the elements of the basis of $V(T(\bar{v}))$ can be separated by characters of $\Gamma$ and by Lemma \ref{lem-ck2} we can separate different tableaux with the same Gelfand-Tsetlin character by the action of $c_{k2}$. And since a submodule of a Gelfand-Tsetlin module is a Gelfand-Tsetlin module, to prove the lemma it is sufficient to show that any tableau of $V(T(\bar{v}))$ appears with a nonzero coefficient in the decomposition of $gT(\bar{v}+z)$ for some $g\in U$.

We prove the lemma in three steps: first  from  $T(\bar{v}+z)$ we  generate the set $L\bigcup S$, then from $S$ we  generate one tableau in $D$, and last, from this one tableau in $D$  we  generate the whole set $D$. 

{\it Step 1: All tableaux of $L \cup S$ are in $U(T(\bar{v}+z)$).} The set $L\bigcup S$ can be obtain in this way because the action of $\mathfrak{gl}(n)$ on $L$ is given by the classical Gelfand-Tsetlin formulas, so the coefficients in the formulas have  numerators which are  products of nonzero monomials of the form $\bar{v}_{rs}+w_{rs}-\bar{v}_{r-1,t}-w_{r-1,t}$. 

{\it Step 2: One tableau of $D$ is in $U(T(\bar{v}+z))$.} If $z=\tau(z)$ then, the coefficient of $\mathcal{D}T(\bar{v}+z+\sigma(\varepsilon_{k+1,k}))$ on $E_{k+1,k}(T(\bar{v}+z))$ is ${\rm ev}(\bar{v})((x-y)e_{k+1,k}(\sigma(v+z)))$, so in order to obtain an element of $D$ from $S$ we have to find some $\sigma$ such that ${\rm ev}(\bar{v})((x-y)e_{k+1,k}(\sigma(v+z)))\neq 0$.  Take $\sigma$ with $\sigma[k]=(1,i)$ and all other $\sigma[t] = {\rm Id}$. Then  the numerator of ${\rm ev}(\bar{v})((x-y)e_{k+1,k}(\sigma(v+z)))$ is $$\prod_{j=1}^{k-1}(\bar{v}_{ki}+z_{ki}-\bar{v}_{k-1,j}-z_{k-1,j})\neq 0.$$
{\it Step 3: All tableaux of $D$ are in $U(T(\bar{v}+z))$.} Given any $w\neq\tau(w)$ and $\mathcal{D}T(\bar{v}+w) \in D$, the coefficient of $\mathcal{D}T(\bar{v}+w+\sigma(\varepsilon_{rs})))$ on $E_{rs}(\mathcal{D}T(\bar{v}+w))$ is $e_{rs}(\sigma(\bar{v}+w))$. Now, using the lemma  hypothesis, we see that $e_{rs}(\sigma(\bar{v}+w))\neq 0$ for any $\sigma$ implying that $\mathcal{D}T(\bar{v}+w)$ generates $D$.
\end{proof}
Lemmas \ref{when a non derivative tableau generates} and \ref{when a derivative tableau generates} together with  Lemma \ref{lem-ck2}  imply the following.

\begin{theorem}\label{thm-when L irred}
The module $V(T(\bar{v}))$ is irreducible whenever $\bar{v}_{rs}-\bar{v}_{r-1,t}\notin \mathbb{Z}$ for any $1\leq t <r\leq n$, $1\leq s\leq r$. 
\end{theorem}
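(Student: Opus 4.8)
The plan is to combine the three preceding lemmas into a single clean argument. Since $V(T(\bar v))$ is a Gelfand-Tsetlin module (Theorem \ref{GT module structure}), any nonzero submodule $N$ contains a generalized eigenvector for $\Gamma$, hence by Corollary \ref{ck2 acts as jordan cell} it contains either some $T(\bar v + z)$ or some $\mathcal{D}T(\bar v + z)$ (one uses that on each character space the action of $c_{k2}$ in the basis $\{T,\mathcal D T\}$ is either diagonal or a single Jordan block, so an eigenvector is a genuine basis element up to scalar). Here $z$ can always be taken with $z \neq \tau(z)$: if $z = \tau(z)$ then the tableau $T(\bar v+z)$ has $w_{ki}=w_{kj}$, and applying $E_{k+1,k}$ with the permutation $\sigma$ having $\sigma[k]=(1,i)$ produces (by the computation in Step 2 of Lemma \ref{when a non derivative tableau generates}) a nonzero multiple of some $\mathcal{D}T(\bar v + z')$ with $z' \neq \tau(z')$; so in all cases $N$ contains a basis element indexed by a non-$\tau$-fixed $z$.

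**Next**, I would invoke the hypothesis $\bar v_{rs} - \bar v_{r-1,t} \notin \mathbb Z$ for all relevant $r,s,t$. This condition guarantees that for \emph{every} $w \in \bar v + \mathbb Z^{n(n-1)/2}$ one has $w_{rs} - w_{r-1,t} \notin \mathbb Z$, in particular $w_{rs}-w_{r-1,t} \notin \mathbb Z_{\geq 0}$ and $\notin \mathbb Z_{\geq -1}$. Therefore the hypotheses of Lemma \ref{when a non derivative tableau generates} and of Lemma \ref{when a derivative tableau generates} are satisfied for \emph{any} choice of $z$ with $z \neq \tau(z)$. Thus: if $N$ contains some $T(\bar v+z)$, then by Lemma \ref{when a non derivative tableau generates} it contains all of $V(T(\bar v))$; and if $N$ contains some $\mathcal D T(\bar v + z)$, then by Lemma \ref{when a derivative tableau generates} it again contains all of $V(T(\bar v))$. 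Either way $N = V(T(\bar v))$, so $V(T(\bar v))$ is irreducible.

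**The one subtle point**, and the step I expect to require the most care, is the very first one: justifying that a nonzero submodule of a Gelfand-Tsetlin module must contain an actual basis tableau (not merely a linear combination living in a single character space). This is where Lemma \ref{lem-ck2} and Corollary \ref{ck2 acts as jordan cell} do the work — on any $\Gamma$-character space of $V(T(\bar v))$ the operator $c_{k2}$ acts as a direct sum of $1\times 1$ blocks and at most one $2\times 2$ Jordan block (and distinct tableaux $T(\bar v+z)$, $\mathcal D T(\bar v+z)$ sharing a $\Gamma$-character are precisely $\tau$-paired via that Jordan block, by Lemma \ref{Gamma k separates tableaux} and Remark \ref{elements in Rm}). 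Since a submodule is $\Gamma$-invariant and splits along characters, its intersection with such a character space is a $c_{k2}$-invariant subspace, hence contains a $c_{k2}$-eigenvector; in the $1\times 1$ case this is a basis tableau, and in the $2\times 2$ Jordan case the eigenvector is forced to be (a scalar multiple of) $T(\bar v+z)$, which again lands us in Lemma \ref{when a non derivative tableau generates}. Once this reduction is in hand the theorem follows immediately from the two generation lemmas, so I would state it as a short corollary of Lemmas \ref{when a non derivative tableau generates}, \ref{when a derivative tableau generates}, and \ref{lem-ck2}, exactly as the sentence preceding the theorem already anticipates.
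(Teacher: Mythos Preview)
Your proposal is correct and follows exactly the approach the paper indicates: the theorem is deduced from Lemmas~\ref{when a non derivative tableau generates} and~\ref{when a derivative tableau generates} together with Lemma~\ref{lem-ck2}, and your third paragraph spells out precisely the key reduction (a nonzero $\Gamma$-stable submodule must contain a $c_{k2}$-eigenvector, which in the Jordan-block picture is forced to be a genuine tableau $T(\bar v+z)$). The only minor redundancy is that once you observe the eigenvector is always $T(\bar v+z)$, the case analysis in your second paragraph involving $\mathcal D T$ becomes unnecessary---everything routes through Lemma~\ref{when a non derivative tableau generates}---but this does not affect correctness.
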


\begin{remark}
We conjecture that the condition  $\bar{v}_{rs}-\bar{v}_{r-1,t}\notin \mathbb{Z}$ is also necessary for the irreducibility of $V(T(\bar{v}))$. The conjecture is true for $n=3$ as proved in \cite{FGR}.
\end{remark}

\section{Number of non-isomorphic irreducible modules associated with a singular character} \label{sec-last}

Let $\sm\in \Sp\Ga$ and $\ell_{\sm}\in \Sp \Lambda$ be such that $\pi(\ell_{\sm})=\sm$. Then $\ell_{\sm}$ defines a tableau $v=v_{\sm}$. Assume that $v_{\sm}$ is $1$-singular, that is $v_{ki}-v_{kj}\in \mathbb Z$ for the fixed $k,i,j$, and all other differences of $v_{tr}$ are non integer.  

Consider the module $M=U/U \sm$. Since $U$ is free as a left and as a right $\Ga$-module (\cite{Ovs}) then, $M\neq 0$. Also $M$ is a Gelfand-Tsetlin module by Lemma \ref{lem-cyclic-Gelfand-Tsetlin}. 
Let $\sn\in \Sp\Ga$. Recall that  by Theorem \ref{thm-bound-dim} 
we
 have  $$\dim M(\sn) \leq |\{\phi\in \mathcal M\mid \pi(\phi\ell_{\sm})=\sn\}|.$$
It is easy to see that the right hand side of the inequality above equals $2$. Hence we proved the following.

\begin{corollary}\label{cor-multiplicity}
All Gelfand-Tsetlin multiplicities of $M= U/U \sm$ are at most $2$. Moreover, if $w=w_{\sm}$ and
$w_{ki}=w_{kj}$, then $\dim M(\sm)=1$.

\end{corollary}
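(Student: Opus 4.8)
The plan is to reduce both assertions to Theorem \ref{thm-bound-dim} by computing the combinatorial quantity $|\{\phi\in \mathcal M\mid \pi(\phi\ell_{\sm})=\sn\}|$ explicitly. First I would recall that $\mathcal M \simeq \mathbb Z^{\frac{n(n-1)}{2}}$ acts by integer shifts on the entries $\ell_{rs}$ with $1\leq s\leq r\leq n-1$ (the top row is fixed), and that $\pi$ is the quotient by the $G = S_n\times\cdots\times S_1$-action permuting the entries within each row. So $\pi(\phi\ell_{\sm}) = \pi(\psi\ell_{\sm})$ if and only if $\phi\ell_{\sm}$ and $\psi\ell_{\sm}$ lie in the same $G$-orbit, i.e. there is $\sigma\in G$ with $\sigma(\phi\ell_{\sm}) = \psi\ell_{\sm}$. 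Since $v_{\sm}=\ell_{\sm}$ is $1$-singular, in every row $r\neq k$ all entries of $v_{\sm}$ are pairwise non-congruent mod $\mathbb Z$, and in row $k$ only the pair $(i,j)$ is congruent mod $\mathbb Z$ while all other pairs are not. I would then argue that an element $\sigma\in G$ can send $\phi\ell_{\sm}$ into $\psi\ell_{\sm}+\mathcal M\cdot 0$... more precisely, fixing $\phi$ and asking for which $\psi$ one has $\pi(\phi\ell_{\sm})=\pi(\psi\ell_{\sm})$: in each row $r$, $\sigma[r]$ must permute entries with the same fractional part, so $\sigma[r]$ is forced to be the identity for $r\neq k$, and $\sigma[k]$ is either the identity or the transposition $(i,j)=\tau[k]$. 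Hence the stabilizer-type count gives exactly two preimages: $\psi\ell_{\sm}=\phi\ell_{\sm}$ or $\psi\ell_{\sm}=\tau(\phi\ell_{\sm})$, and these coincide precisely when $\phi\ell_{\sm}$ is $\tau$-fixed, i.e. $(\phi\ell_{\sm})_{ki}=(\phi\ell_{\sm})_{kj}$.

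From this, the first assertion is immediate: for any $\sn$ in the support, $|\{\phi\in\mathcal M\mid \pi(\phi\ell_{\sm})=\sn\}|\leq 2$, and since $M=U/U\sm$ is generated by the image $x$ of $1$, which lies in $M(\sm)$ and satisfies $\sm x=0$, Theorem \ref{thm-bound-dim} applies and yields $\dim M(\sn)\leq 2$ for every $\sn$ in the support; for $\sn$ not in the support $\dim M(\sn)=0$ trivially. For the refined statement, suppose $w=w_{\sm}$ satisfies $w_{ki}=w_{kj}$, so $\ell_{\sm}$ itself is $\tau$-fixed. Then for $\sn=\sm$ I need $|\{\phi\in\mathcal M\mid \pi(\phi\ell_{\sm})=\sm\}|=|\{\phi\mid \pi(\phi\ell_{\sm})=\pi(\ell_{\sm})\}|$. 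By the orbit analysis above, $\pi(\phi\ell_{\sm})=\pi(\ell_{\sm})$ forces $\phi\ell_{\sm}$ to lie in the $G$-orbit of $\ell_{\sm}$; but $\phi$ acts by integer shifts while the only permutations available in the relevant rows are the identity and $\tau[k]$, and $\tau[k]$ fixes $\ell_{\sm}$ since $\ell_{\sm}$ is $\tau$-fixed. Matching entry-by-entry then forces $\phi=0$, so the count is $1$ and $\dim M(\sm)=1$. (Here one also uses that $M(\sm)\neq 0$ because $x\in M(\sm)$, so in fact $\dim M(\sm)=1$ exactly.)

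The main obstacle I anticipate is making the orbit-counting argument airtight, in particular verifying that no "accidental" coincidence $\pi(\phi\ell_{\sm})=\pi(\psi\ell_{\sm})$ arises from a permutation $\sigma$ that mixes the singular pair $(i,j)$ with other entries in row $k$ or from interactions between rows. This is exactly where $1$-singularity is used: because all differences $v_{tr}-v_{ts}$ except $(t,r,s)=(k,i,j)$ are non-integers, the shifted entries $(\phi\ell_{\sm})_{tr}$ still have pairwise distinct fractional parts within each row except possibly for the pair $(k,i),(k,j)$, so any $\sigma\in G$ matching two such tableaux must preserve fractional parts and is thus pinned down to at most $\{\mathrm{Id},\tau\}$. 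Once this is established the rest is a direct comparison of integer vectors. I would also remark, as the excerpt does, that the value $2$ (and the independence of the bound on the choice of $\ell_{\sm}$) is guaranteed by \cite[\S 4]{FO2}, so only the explicit evaluation is needed here.
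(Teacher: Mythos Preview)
Your proposal is correct and follows exactly the paper's approach: the paper simply applies Theorem~\ref{thm-bound-dim} and asserts that the right-hand side ``is easy to see'' to equal (at most) $2$, while you spell out the orbit-counting argument that justifies this, using $1$-singularity to force any $\sigma\in G$ matching two shifted tableaux to lie in $\{\mathrm{Id},\tau\}$. Your treatment of the ``moreover'' clause via $\tau$-fixedness of $\ell_{\sm}$ is the intended argument as well, and your remark about $M(\sm)\neq 0$ (hence $\dim M(\sm)=1$ exactly) is a useful detail the paper leaves implicit.
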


Now we are ready to present irreducible $1$-singular Gelfand-Tsetlin modules as subquotients of universal $1$-singular modules.

\begin{theorem}\label{thm-irr} Let $\sn\in \Sp\Ga$ be such that  $w=\ell_{\sn}$ is $1$-singular and $w=\bar{v}+z$.
\begin{itemize}
\item[(i)]
 There exist at most two non isomorphic irreducible Gelfand-Tsetlin modules $N_1$ and $N_2$ such that $N_1(\sn)\neq 0$ and  $N_2(\sn)\neq 0$.  
\item[(ii)]
 If 
$w_{ki}=w_{kj}$ then there exists a unique irreducible Gelfand-Tsetlin module $N$ with $N(\sn)\neq 0$. This module appears as a subquotient of   $V(T(\bar{v}))$.
\item[(iii)]If $w_{rs}-w_{r-1,t}\notin \mathbb{Z}$ for any $1\leq t <r\leq n$, $1\leq s\leq r$ then $V(T(\bar{v}))$ is the unique irreducible Gelfand-Tsetlin module $N$ with such that $N(\sn)\neq 0$. 

 \end{itemize}
\end{theorem}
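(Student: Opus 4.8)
The plan is to derive all three statements from the single module $M:=U/U\sn$ together with the multiplicity bound of Corollary \ref{cor-multiplicity}. First I would record three facts. Since $U$ is free as a left $\Ga$-module \cite{Ovs} we have $M\neq 0$; by Lemma \ref{lem-cyclic-Gelfand-Tsetlin}, $M$ is a Gelfand-Tsetlin module, and by Theorem 4.14 in \cite{FO2} it has finite length. Because $M$ is generated by the image $\bar 1\in M(\sn)$ of $1$, with $\sn\bar 1=0$, Corollary \ref{cor-multiplicity} gives $\dim M(\sn)\le 2$, and $\dim M(\sn)=1$ when $w_{ki}=w_{kj}$. Second, if $N$ is an irreducible Gelfand-Tsetlin module with $N(\sn)\neq 0$, I choose $0\neq v\in N(\sn)$ with $\sn v=0$ (possible since $\sn$ acts locally nilpotently on $N(\sn)$); then $N=Uv$ by irreducibility, so $N$ is a quotient of $M$, hence a composition factor of $M$. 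Third, taking generalized $\sn$-eigenspaces is exact on Gelfand-Tsetlin modules, so for any composition series of $M$ one has $\dim M(\sn)=\sum_{N}[M:N]\dim N(\sn)$, the sum over isomorphism classes of composition factors.

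Part (i) is then immediate: each isomorphism class $N$ with $N(\sn)\neq 0$ contributes $[M:N]\dim N(\sn)\ge 1$ to $\dim M(\sn)\le 2$, so at most two such classes occur. For part (ii) the hypothesis $w_{ki}=w_{kj}$ forces $\dim M(\sn)=1$, so there is exactly one isomorphism class $N$ with $N(\sn)\neq 0$, which is the uniqueness assertion. For the realization, note that $w_{ki}=w_{kj}$ means $z=\tau(z)$, so $T(\bar v+z)$ is a nonderivative basis vector of $V(T(\bar v))$, and by Corollary \ref{ck2 acts as jordan cell} it is a genuine $\sn$-eigenvector; a short check — the only integer vectors $z'$ with $\bar v+z'$ in the $G$-orbit of $\bar v+z$ are $z'=z$ — shows $V(T(\bar v))(\sn)=\mathbb{C}\,T(\bar v+z)$. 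I would then put $W=U\,T(\bar v+z)$, a cyclic submodule, and let $W'\subseteq W$ be maximal among submodules not containing $T(\bar v+z)$ (these form a nonempty poset closed under directed unions, so Zorn applies). Then $W'$ is a maximal submodule of $W$, $W/W'$ is irreducible, and $(W/W')(\sn)=W(\sn)/W'(\sn)\neq 0$; hence $W/W'\cong N$, realizing $N$ as a subquotient of $V(T(\bar v))$.

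For part (iii), Theorem \ref{thm-when L irred} shows $V(T(\bar v))$ is irreducible, and by Corollary \ref{ck2 acts as jordan cell} the basis vector $T(\bar v+z)$ is a genuine $\sn$-eigenvector annihilated by $\sn$; irreducibility gives $V(T(\bar v))=U\,T(\bar v+z)$, so $V(T(\bar v))$ is a quotient of $M$. A computation as in (ii) shows that $V(T(\bar v))(\sn)$ is spanned by $T(\bar v+z)$ if $w_{ki}=w_{kj}$ and by $\{T(\bar v+z),\mathcal{D}T(\bar v+z)\}$ if $w_{ki}\neq w_{kj}$, so $\dim V(T(\bar v))(\sn)$ is $1$ or $2$ accordingly; since $V(T(\bar v))$ is a quotient of $M$ and $\dim M(\sn)\le 2$, in both cases $\dim V(T(\bar v))(\sn)=\dim M(\sn)$. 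Substituting into $\dim M(\sn)=\sum_N[M:N]\dim N(\sn)$, the single factor $V(T(\bar v))$ exhausts the sum; hence it is the only composition factor of $M$ with $\sn$ in its support, and since every irreducible Gelfand-Tsetlin module with $\sn$ in its support is such a composition factor, $V(T(\bar v))$ is the unique one.

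The logical skeleton above is essentially bookkeeping once Corollary \ref{cor-multiplicity} and Corollary \ref{ck2 acts as jordan cell} are in hand, so the main point to get right is the two dimension computations for $V(T(\bar v))(\sn)$: one must verify that the only integer translates $z'$ with $\bar v+z'$ in the $G$-orbit of $\bar v+z$ are $z'\in\{z,\tau(z)\}$, and then, using Corollary \ref{ck2 acts as jordan cell}, track which basis tableaux and derivative tableaux this yields in the two cases $z=\tau(z)$ and $z\neq\tau(z)$. A secondary technical point is the Zorn's lemma step in (ii); if one prefers, it can be replaced by first establishing that $V(T(\bar v))$ has finite length and taking a composition series.
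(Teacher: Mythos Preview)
Your proof is correct and is organized around a different external input than the paper's. The paper argues (i) by invoking the module structure on $X_{\sn}(\sn)$ over a certain algebra (Corollary~5.3 of \cite{FO2}): since $\dim X_{\sn}(\sn)\le 2$ and every irreducible $N$ with $N(\sn)\neq 0$ gives a surjective module map $X_{\sn}(\sn)\to N(\sn)$, at most two isomorphism classes can occur. You instead import finite length of $U/U\sn$ (Theorem~4.14 of \cite{FO2}) and use the additivity $\dim M(\sn)=\sum_N[M:N]\dim N(\sn)$ over a composition series; this is more elementary and entirely avoids the auxiliary algebra, at the cost of relying on a different black box from \cite{FO2}. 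For (ii) the paper is terse (``there exists a unique irreducible quotient \ldots (and of $V(T(\bar v))$)''), whereas your Zorn argument makes the subquotient realization explicit; your observation that the only integer $z'$ with $\bar v+z'\in G\cdot(\bar v+z)$ are $z'\in\{z,\tau(z)\}$ is the right way to pin down $\dim V(T(\bar v))(\sn)$ via Corollary~\ref{ck2 acts as jordan cell}. For (iii) the paper simply cites Theorem~\ref{thm-when L irred}, leaving the uniqueness implicit; your argument that $\dim V(T(\bar v))(\sn)=\dim M(\sn)$ forces $V(T(\bar v))$ to exhaust the sum $\sum_N[M:N]\dim N(\sn)$ is a clean and genuinely more informative way to finish.
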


\begin{proof}
 Let $X_{\sn}=U/U \sn$. We know that  $X_{\sn}=U/U \sn$ is a Gelfand-Tsetlin module by Lemma \ref{lem-cyclic-Gelfand-Tsetlin}. Furthermore, any irreducible Gelfand-Tsetlin module $M$ with $M(\sn)\neq 0$ is a homomorphic image of $X_{\sn}$, and  $X_{\sn}(\sn)$  maps onto $M(\sn)$. Since both spaces  $X_{\sn}(\sn)$ and $M(\sn)$ have additional structure as modules over certain algebra (see Corollary 5.3, \cite{FO2}) then the projection $X_{\sn}(\sn) \to M(\sn)$ is in fact a homomorphism of modules. Taking into account that $\dim X_{\sn}(\sn) \leq 2$ by Corollary \ref{cor-multiplicity}, we conclude that
there exist at most two non isomorphic irreducible  $N$ with $N(\sn)\neq 0$. This proves part (i) of the theorem. 

 Recall that  the Gelfand-Tsetlin multiplicities of $V(T(\bar{v}))$ are bounded by $2$ by Corollary \ref{ck2 acts as jordan cell}.   Assume now that $w_{ki}=w_{kj}$.  Then
$\dim V(T(w))(\sn)=1$. But this is possible  if and only if $\dim X_{\sn}(\sn)=1$ by Theorem \ref{thm-bound-dim}. Hence, there exists a unique 
irreducible quotient   $N$ of $X_{\sn}$ (and of $V(T(\bar{v}))$) with $N(\sn)\neq 0$ which implies part  (ii).  Finally, part (iii) follows from Theorem \ref{thm-when L irred}.
\end{proof}

\begin{remark} In order to complete the  classification of irreducible $1$-singular Gelfand-Tsetlin modules we need to find a presentation for two non-isomorphic irreducible modules $N_1$ and $N_2$ be  with $N_i(\sn)\neq 0$, $i=1,2$, and some $1$-singular $\sn$. We conjecture that both $N_1$ and $N_2$  appear as  subquotients of $V(T(\bar{v}))$ for $w_{\sn}= \bar{v}+z$.  By Theorem \ref{thm-irr}(i) this is  true if $V(T(\bar{v}))$ has two non-isomorphic irreducible  subquotients with $\sn$ in their support. On the other hand, if $V(T(\bar{v}))$ has two isomorphic irreducible subquotients, the conjecture remains open. Such irreducible modules have all Gelfand-Tsetlin multiplicities $1$ (the case considered in \cite{LP1}, \cite{LP2}). 
The existence of isomorphic subquotients of $V(T(\bar{v}))$ is present already in the case $n=3$, a case in which  the conjecture holds, as shown in \cite{FGR}.  
\end{remark}

\section{Appendix}

Like in Section \ref{sec-der}, in this appendix we assume that  all tableaux $T(v)$ have fixed first row, hence the corresponding vectors $v$ are in $\mathbb{C}^{\frac{n(n-1)}{2}}$. The goal of this appendix is to prove Propositions   \ref{compatible} and \ref{d-t-v-rep}.

\subsection{Useful identities}
For a function $f = f(v)$  by $f^{\tau}$ we denote the function $f^{\tau} (v) = f (\tau (v))$.

The following lemma can be easily verified. 
\begin{lemma}\label{dif operaator in functions}
Suppose $f\in {\mathcal F}_{ij}$ and $h:=\frac{f-f^{\tau}}{x-y}$.
\begin{itemize}
\item[(i)] If $f=f^{\tau}$, then ${\mathcal D}^{\bar{v}}(f)=0$.
\item[(ii)] If $h\in {\mathcal F}_{ij}$, then ${\rm ev} (\bar{v})(h)=2{\mathcal D}^{\bar{v}} (f)$.
\item[(iii)] ${\rm ev} (\bar{v})(f)={\mathcal D}^{\bar{v}}((x-y)f)$.
\end{itemize}
\end{lemma}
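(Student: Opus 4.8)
The three identities are local statements about rational functions in a neighbourhood of $\bar v$, and since $\tau$ moves only the two coordinates $x = v_{ki}$ and $y = v_{kj}$, the plan is to freeze all other coordinates at their values in $\bar v$ and regard $f$ as a function $F = F(x,y)$ of two variables near the diagonal point $p := (\bar v_{ki}, \bar v_{kj})$, where $\bar v_{ki} = \bar v_{kj}$. With this reduction $f^{\tau}$ becomes the function $(x,y) \mapsto F(y,x)$, and $\mathcal{D}^{\bar v}(f) = \tfrac{1}{2}(\partial_x F - \partial_y F)(p)$, so each claim turns into an elementary computation with first-order derivatives on this two-variable slice.

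For (i): if $f = f^{\tau}$ then $F(x,y) = F(y,x)$ identically; applying the chain rule to the right-hand side and evaluating at the $\tau$-fixed point $\bar v$ gives $\partial_x F(p) = \partial_y F(p)$, so $\mathcal{D}^{\bar v}(f) = 0$. For (iii): the product rule gives $\partial_x((x-y)F) = F + (x-y)\partial_x F$ and $\partial_y((x-y)F) = -F + (x-y)\partial_y F$; subtracting and evaluating at $\bar v$, where the factor $x-y$ vanishes, leaves $2F(p)$, hence $\mathcal{D}^{\bar v}((x-y)f) = F(p) = {\rm ev}(\bar v)(f)$ — here one only uses that $(x-y)f$ again lies in ${\mathcal F}_{ij}$, which is immediate. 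For (ii): the hypothesis $h = (f - f^{\tau})/(x-y) \in {\mathcal F}_{ij}$ says precisely that the apparent pole of $h$ on the hyperplane $x = y$ is removable, so $h$ is continuous at $\bar v$ and ${\rm ev}(\bar v)(h) = h(p)$ is its genuine value. Computing $h(p)$ as the limit along the line $y = \bar v_{kj}$ produces the difference quotient $\big(F(x,\bar v_{kj}) - F(\bar v_{kj},x)\big)/(x - \bar v_{kj})$, whose limit as $x \to \bar v_{kj}$ is $\partial_x F(p) - \partial_y F(p) = 2\mathcal{D}^{\bar v}(f)$; equivalently, the first-order Taylor expansion of $F(x,y) - F(y,x)$ at $p$ is $(\partial_x F - \partial_y F)(p)\,(x-y)$.

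I do not anticipate a genuine obstacle: the whole argument is a bookkeeping of first derivatives on a two-variable slice. The only point deserving care is in (ii), where it is essential to invoke the standing hypothesis $h \in {\mathcal F}_{ij}$ — without the removability of the pole of $h$ on $x = y$ the symbol ${\rm ev}(\bar v)(h)$ would not even be defined — after which the identification of $h(\bar v)$ with the one-sided limit, and hence with $2\mathcal{D}^{\bar v}(f)$, follows at once from continuity.
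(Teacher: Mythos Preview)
Your proof is correct and is precisely the direct verification the paper has in mind; the paper itself simply states that the lemma ``can be easily verified'' and gives no further argument. Your reduction to the two-variable slice in $(x,y)$ and the elementary first-order computations for each part are exactly what that verification amounts to.
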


\begin{lemma} \label{dv-formulas}
Let $f_m, g_m$, $m=1, \ldots, t$, be functions such that $f_m,(x-y)g_m$, and $\sum_{m=1}^t f_m g_m$ are in ${\mathcal F}_{ij}$ and $g_m \notin  {\mathcal F}_{ij}$. Assume also that  $\sum_{m=1}^t f_m g_m^{\tau}= 0$. Then the following identities hold.
\begin{itemize}
\item[(i)] $2\sum_{m=1}^t{\mathcal D}^{\bar{v}} (f_m) {\mathcal D}^{\bar{v}} ( (x-y) g_m) =  {\mathcal D}^{\bar{v}} \left(\sum_{m=1}^t f_m g_m\right)$.
\item[(ii)]  $2\sum_{m=1}^t{\mathcal D}^{\bar{v}} (f_m) {\rm ev} (\bar{v}) ( (x-y) g_m) =  {\rm ev} (\bar{v})  \left(\sum_{m=1}^t f_mg_m\right)$.
\end{itemize}
\end{lemma}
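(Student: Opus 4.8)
\emph{Approach.} The plan is to eliminate the singular factors $g_m$ altogether, using the relation $\sum_m f_m g_m^{\tau}=0$, so that both identities reduce to a routine application of the product rule for $\mathcal{D}^{\bar{v}}$ together with Lemma~\ref{dif operaator in functions}.

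First I would set $h_m := (x-y)g_m\in\mathcal{F}_{ij}$, so that $g_m = h_m/(x-y)$ and, since $\tau$ interchanges $x=v_{ki}$ and $y=v_{kj}$ (and $\tau^2={\rm Id}$), also $g_m^{\tau} = -h_m^{\tau}/(x-y)$. Hence the hypothesis $\sum_m f_m g_m^{\tau}=0$ is equivalent to the identity $\sum_m f_m h_m^{\tau}=0$ of rational functions, and applying the coordinate‑permutation automorphism $\tau$ to it yields the dual identity $\sum_m f_m^{\tau}h_m=0$. Note that $h_m^{\tau}$ and $f_m^{\tau}$ again lie in $\mathcal{F}_{ij}$, since $\tau$ fixes the hyperplane $x=y$ setwise and so cannot create a pole there.

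The key step is to rewrite $\sum_m f_m g_m$ in two different ways. Using $\sum_m f_m h_m^{\tau}=0$,
\[
\sum_m f_m g_m=\frac{1}{x-y}\sum_m f_m h_m=\frac{1}{x-y}\sum_m f_m(h_m-h_m^{\tau})=\sum_m f_m\,\phi_m,\qquad \phi_m:=\frac{h_m-h_m^{\tau}}{x-y};
\]
dually, from $\sum_m f_m^{\tau}h_m=0$ one gets $\sum_m f_m g_m=\sum_m \psi_m\,h_m$ with $\psi_m:=\frac{f_m-f_m^{\tau}}{x-y}$. Each of $\phi_m,\psi_m$ is $\tau$-invariant (numerator and denominator both change sign under $\tau$), and each lies in $\mathcal{F}_{ij}$: its numerator is $\tau$-anti-invariant, hence vanishes on the fixed locus $x=y$ of $\tau$, which cancels the simple pole of $1/(x-y)$. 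Thus the a priori singular sum $\sum_m f_m g_m$ has been expressed as a sum of products of functions that are genuinely regular at $\bar{v}$.

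Now both identities fall out of the product rules $\mathcal{D}^{\bar{v}}(ab)=\mathcal{D}^{\bar{v}}(a){\rm ev}(\bar{v})(b)+{\rm ev}(\bar{v})(a)\mathcal{D}^{\bar{v}}(b)$ and ${\rm ev}(\bar{v})(ab)={\rm ev}(\bar{v})(a){\rm ev}(\bar{v})(b)$ for $a,b\in\mathcal{F}_{ij}$, combined with Lemma~\ref{dif operaator in functions}: part (i) there gives $\mathcal{D}^{\bar{v}}(\phi_m)=\mathcal{D}^{\bar{v}}(\psi_m)=0$ since $\phi_m,\psi_m$ are $\tau$-invariant, and part (ii) gives ${\rm ev}(\bar{v})(\phi_m)=2\mathcal{D}^{\bar{v}}(h_m)=2\mathcal{D}^{\bar{v}}((x-y)g_m)$ and ${\rm ev}(\bar{v})(\psi_m)=2\mathcal{D}^{\bar{v}}(f_m)$. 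Applying $\mathcal{D}^{\bar{v}}$ to $\sum_m f_m\phi_m$ then produces $\sum_m \mathcal{D}^{\bar{v}}(f_m)\cdot 2\mathcal{D}^{\bar{v}}((x-y)g_m)$, which is (i); applying ${\rm ev}(\bar{v})$ to $\sum_m \psi_m h_m$ produces $\sum_m 2\mathcal{D}^{\bar{v}}(f_m)\cdot{\rm ev}(\bar{v})((x-y)g_m)$, which is (ii). The whole content of the argument is the substitution of the previous paragraph; the only points requiring care are checking that $\phi_m,\psi_m\in\mathcal{F}_{ij}$, and matching the representation to the identity — one must use $\sum_m f_m\phi_m$ for (i) and $\sum_m\psi_m h_m$ for (ii), since the opposite choice would place $\mathcal{D}^{\bar{v}}$ and ${\rm ev}(\bar{v})$ on the wrong factors and give the ``transposed'' expression instead.
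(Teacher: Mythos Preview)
Your proof is correct and follows essentially the same route as the paper's: both rewrite $\sum_m f_m g_m$ as $\sum_m f_m\,\phi_m$ with $\phi_m=\frac{h_m-h_m^{\tau}}{x-y}$ (the paper obtains this by adding the vanishing sum $\sum_m f_m g_m^{\tau}$), then apply the product rule together with Lemma~\ref{dif operaator in functions}(i)--(ii). For part (ii) the paper merely says ``similar arguments''; you make explicit that the dual representation $\sum_m \psi_m h_m$ (coming from the $\tau$-transformed identity $\sum_m f_m^{\tau}h_m=0$) is what is actually needed to land $\mathcal{D}^{\bar{v}}$ on $f_m$ and ${\rm ev}(\bar{v})$ on $(x-y)g_m$ rather than the other way around.
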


\begin{proof} Set for simplicity $\bar{g}_m = (x-y)g_m$. For (i) we use Lemma \ref{dif operaator in functions} and obtain
\begin{eqnarray*}
{\mathcal D}^{\bar{v}} \left( \sum_{k=1}^t f_mg_m\right) & = & {\mathcal D}^{\bar{v}} \left(\sum_{m=1}^t f_mg_m+ \sum_{m=1}^t f_mg_m^{\tau}\right)\\
& = & {\mathcal D}^{\bar{v}} \left(\sum_{m=1}^t f_m \frac{\bar{g}_m - (\bar{g}_m)^{\tau}}{x-y}
\right)\\
& = &\sum_{m=1}^t   {\mathcal D}^{\bar{v}} (f_m) {\rm ev}(\bar{v}) \left( \frac{\overline{g}_m - (\overline{g}_m)^{\tau}}{x-y} \right) + \sum_{m=1}^t   {\rm ev}(\bar{v}) (f_m)  {\mathcal D}^{\bar{v}} \left( \frac{\overline{g}_m - (\overline{g}_m)^{\tau}}{x-y} \right)\\
& = &2 \sum_{m=1}^t  {\mathcal D}^{\bar{v}} (f_m) {\mathcal D}^{\bar{v}} (\overline{g}_m).
\end{eqnarray*}

For (ii) we use similar arguments.
\end{proof}

Given $z \in {\mathbb Z}^{\frac{n(n-1)}{2}}$, we define the following set:
\begin{eqnarray*}
\overline{\Phi}_{rs}&=& \{ \sigma \in \Phi_{rs} \: | \; \tau(z + \sigma(\varepsilon_{rs})) =  z + \sigma(\varepsilon_{rs}) \}.
\end{eqnarray*}
Also for any $(\sigma_1',\sigma_2') \in \bar{\Phi}_{r s}\times\bar{\Phi}_{\ell m}$ define
\begin{eqnarray*}
\Phi_{(\sigma_1',\sigma_2')}&=& \{ (\sigma_1,\sigma_2) \in \Phi_{r s}\times\Phi_{\ell m} \: | \; \sigma_1(\varepsilon_{rs})+\sigma_2(\varepsilon_{\ell m})=\sigma_1'(\varepsilon_{rs})+\sigma_2'(\varepsilon_{\ell m}) \}. 
\end{eqnarray*}

\begin{remark} We have that $(\sigma_{1},\sigma_{2})\in\bar{\Phi}_{rs}\times\bar{\Phi}_{\ell m}$. Indeed, if $(\sigma_1',\sigma_2') \in \bar{\Phi}_{rs}\times\bar{\Phi}_{\ell m}$ then $\Phi_{(\sigma_1',\sigma_2')}\subseteq \bar{\Phi}_{rs}\times\bar{\Phi}_{\ell m}$. But, if $(\sigma_{1},\sigma_{2})\in \Phi_{(\sigma_1',\sigma_2')}$ then $\{\sigma_{1}[k],\sigma_{2}[k]\}=\{\sigma_{1}'[k],\sigma_{2}'[k]\}$ and the elements of $\bar{\Phi}_{r s}$ or $\bar{\Phi}_{\ell m}$ satisfy conditions that involve  row $k$ only. 
\end{remark}
\begin{lemma} \label{lm-rs-ident} Let $v$ be generic, $\ell \neq m$, $r \neq s$ and let $z \in {\mathbb Z}^{\frac{n(n-1)}{2}}$ be such that  $\sigma_1' \in \overline{\Phi}_{rs} $, $\sigma_2' \in \overline{\Phi}_{\ell m }$. Then
$$
\sum \big(e_{\ell m} (\sigma_2 (v+z)) e_{rs} (\sigma_1 (v+z+ \sigma_2 (\varepsilon_{\ell m}))) - e_{rs } (\sigma_1 (v+z)) e_{\ell m} (\sigma_2 (v+z+ \sigma_1 (\varepsilon_{rs})))\big) =0.
$$
where the sum above is over $(\sigma_1, \sigma_2) \in \Phi_{(\sigma_1',\sigma_2')}$.
\end{lemma}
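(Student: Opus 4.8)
The strategy is to read the left-hand side as a matrix coefficient of a commutator acting on the generic Gelfand-Tsetlin module $V(T(v))$, and then to use that $V(T(v))$ is a bona fide $\gl(n)$-module.

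First, a purely formal reduction. Set $\mu := \sigma_1'(\varepsilon_{rs}) + \sigma_2'(\varepsilon_{\ell m})$. Expanding $E_{rs}(E_{\ell m}(T(v+z)))$ and $E_{\ell m}(E_{rs}(T(v+z)))$ by Proposition \ref{coefficients e_ij} and collecting, in the difference, the coefficient of the basis tableau $T(v+z+\mu)$, the contributing pairs $(\sigma_1,\sigma_2)\in\Phi_{rs}\times\Phi_{\ell m}$ are exactly those with $\sigma_1(\varepsilon_{rs})+\sigma_2(\varepsilon_{\ell m})=\mu$, i.e.\ exactly the set $\Phi_{(\sigma_1',\sigma_2')}$, and that coefficient is precisely the sum in the statement. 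So the assertion is equivalent to: the coefficient of $T(v+z+\mu)$ in $[E_{rs},E_{\ell m}](T(v+z))$ vanishes.

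Next, since $V(T(v))$ is a $\gl(n)$-module by Theorem \ref{Generic Gelfand-Tsetlin modules}(i), the operator $[E_{rs},E_{\ell m}]$ acts on it through the Lie bracket of $\gl(n)$. When $s\neq\ell$ and $r\neq m$ this bracket is $0$, so $[E_{rs},E_{\ell m}](T(v+z))=0$ and every coefficient — in particular our sum — vanishes; this is the instance of the lemma that enters Propositions \ref{compatible} and \ref{d-t-v-rep}, where the conditions $\sigma_1'\in\overline{\Phi}_{rs}$, $\sigma_2'\in\overline{\Phi}_{\ell m}$ only serve to record that the intermediate tableau $T(v+z+\mu)$ sits over the singular hyperplane. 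In the remaining cases ($s=\ell$, or $r=m$, or $\{r,s\}=\{\ell,m\}$) the bracket is a single root vector $\pm E_{ab}$ with $a\neq b$, or a Cartan element, so the sum equals the coefficient of $T(v+z+\mu)$ in $E_{ab}(T(v+z))$, resp.\ a weight times $T(v+z)$. Working at a generic $v$, all coefficients $e_{ab}(\sigma(v+z))$ are non-zero and distinct $\sigma$ produce distinct shifts, so one is reduced to the combinatorial claim that, under the hypotheses, $\mu$ is not among the shifts $\sigma(\varepsilon_{ab})$ (resp.\ $\mu\neq 0$). This should be forced by the membership of $\sigma_1',\sigma_2'$ in the barred sets: $z+\sigma_1'(\varepsilon_{rs})$ and $z+\sigma_2'(\varepsilon_{\ell m})$ are $\tau$-fixed, and comparing the row-$k$ coordinates of $\mu$ obtained from this (in the spirit of the Remark preceding the statement) with the single unit $\varepsilon_{ab}$ may place in row $k$ rules out $\mu=\sigma(\varepsilon_{ab})$.

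The main obstacle is precisely this last, combinatorial, step: one has to run a case analysis on the relative order of $r,s,\ell,m$ and on whether the singular row $k$ lies in the support of $\varepsilon_{rs}$, of $\varepsilon_{\ell m}$, of both, or of neither, and check in each case that the $\tau$-invariance imposed by the $\overline{\Phi}$-conditions genuinely obstructs $\mu$ from being a shift of the (possibly non-zero) bracket. The first two steps are formal; tracking these row-$k$ constraints is where the real work lies.
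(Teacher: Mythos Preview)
Your approach is exactly the paper's: interpret the sum as the coefficient of $T(v+z+\mu)$ in $[E_{rs},E_{\ell m}](T(v+z))$ and show that this coefficient vanishes on the bracket side.

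The combinatorial step you flag as the main obstacle is in fact a one-line computation, not a case analysis on $r,s,\ell,m$. From $\sigma_1'\in\overline{\Phi}_{rs}$ and $\sigma_2'\in\overline{\Phi}_{\ell m}$ one gets $z_{ki}-z_{kj}=\sigma_1'(\varepsilon_{rs})_{kj}-\sigma_1'(\varepsilon_{rs})_{ki}=\sigma_2'(\varepsilon_{\ell m})_{kj}-\sigma_2'(\varepsilon_{\ell m})_{ki}$; since each $\sigma'(\varepsilon)$ has at most one nonzero entry (equal to $\pm 1$) in row $k$, and since $\tau(z)\neq z$ in the context where the lemma is applied, this common value is $\pm 1$. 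Adding the two row-$k$ contributions then forces
\[
(\mu_{ki},\mu_{kj})\in\{(1,-1),(-1,1),(2,0),(-2,0),(0,2),(0,-2)\}.
\]
On the other hand, for any single shift $\sigma(\varepsilon_{ab})$ (or for $0$, in the Cartan case) the row-$k$ pair lies in $\{(0,0),(\pm 1,0),(0,\pm 1)\}$, since there is at most one nonzero entry in row $k$. These two sets are disjoint, so $\mu$ can never match a shift appearing on the bracket side, regardless of the relative positions of $r,s,\ell,m$ or of whether $k$ lies in the support of $\varepsilon_{rs}$ or $\varepsilon_{\ell m}$. That is precisely the argument the paper gives.
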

\begin{proof} The identity in the lemma follows by comparing the coefficient of $T(v+z + \sigma_1' (\varepsilon_{rs}) +   \sigma_2' (\varepsilon_{\ell m}) )$ on both sides of
\begin{equation} \label{eq-comp}
E_{rs}(E_{\ell m} T(v+z)) - E_{\ell m }(E_{r s} T(v+z))   = [E_{rs}, E_{\ell m}] T(v+z)
\end{equation}
 Note that if $w = \sigma_1' (\varepsilon_{rs}) +   \sigma_2' (\varepsilon_{\ell m})$ then 
 $$
 (w_{ki}, w_{kj})\in \{ (1,-1),(-1,1),(2,0),(-2,0), (0,2),(0,-2)\}
 $$ and in each of these six cases we have that the coefficient of $T(v+z + \sigma_1' (\varepsilon_{rs}) +   \sigma_2' (\varepsilon_{\ell m}) )$  in the right hand side of (\ref{eq-comp}) is zero. 
 \end{proof}

For $\min (\ell,m) \leq k \leq \max(\ell,m) -1$ and $1 \leq t \leq k$ we set
$$
\Phi_{\ell m} (k,t) = \{ \sigma \in \Phi_{\ell m} \; | \; \sigma[k] = (1,t)\}.
$$
In most of the considerations in this section we will need  $\Phi_{\ell m} (k,t)$ for $t=i$ and $t =j$ only. We set for convenience $\Phi_{\ell m} (i) = \Phi_{\ell m} (k,i)$ and $\Phi_{\ell m} (j)  = \Phi_{\ell m} (k,j)$.  The following lemma will be useful to prove that $V(T(\bar{v}))$ is a $\mathfrak{gl} (n)$-module.

\begin{lemma}\label{poles of order at most 1}
Let $z \in {\mathcal H}$ (equivalently, $\tau (z) = z$), $\sigma \in \Phi_{rs}$ and $w=v+z$. The function $e_{rs} (\sigma (w))$ of $v$ has a simple pole on $\overline{\mathcal H}$ if $\min (r, s ) \leq k \leq \max (r, s) - 1$ and $\sigma \in \Phi_{rs} (i)  \cup \Phi_{rs} (j)$. In all other cases $e_{rs} (\sigma (w))$ is in ${\mathcal F}_{ij}$, i.e. it is smooth on $\overline{\mathcal H}$.
\end{lemma}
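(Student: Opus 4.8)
The plan is to analyze directly which factors in the explicit product formula for $e_{rs}(\sigma(w))$ become singular when we restrict to $\overline{\mathcal H}$ and set $w = v+z$ with $\tau(z) = z$. Recall from Definition \ref{definition of coefficients e_rs} that, for $r < s$, $e_{rs}(w) = \left(\prod_{t=r}^{s-2} e_t^{(+)}(w)\right) e_{s-1,s}(w)$, and each factor $e_t^{(+)}(w)$ (resp.\ $e_{s-1,s}(w)$) is a ratio whose \emph{denominator} is $\prod_{\ell\neq 1}^{t}(w_{t1} - w_{t\ell})$ (resp.\ $\prod_{\ell \neq 1}^{s-1}(w_{s-1,1}-w_{s-1,\ell})$). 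A pole on $\overline{\mathcal H}$ can only come from a denominator factor that vanishes identically on $\overline{\mathcal H}$, i.e.\ a factor of the form $w_{t\alpha} - w_{t\beta}$ with $\{t\} = \{k\}$ and $\{\alpha,\beta\} = \{i,j\}$ (since by definition of $\overline{\mathcal H}$ all other coordinate differences are nonzero there). The numerators involve differences $w_{t,1} - w_{t\pm1,\ell}$ of entries on \emph{adjacent} rows, which are nonintegers for $\bar v + {\mathbb Z}^{\frac{n(n-1)}{2}} \subset \overline{\mathcal H}$ and hence never vanish on $\overline{\mathcal H}$; so numerators contribute no cancellation. The case $r > s$ is symmetric, using $e_t^{(-)}$ and $e_{s+1,s}$, and the case $r = s$ is a polynomial, hence always in ${\mathcal F}_{ij}$.

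First I would observe that the action of $\sigma \in \Phi_{rs}$ on $w = v+z$ only permutes, within each row $t$ with $\min(r,s) \leq t \leq \max(r,s)-1$, the first coordinate $w_{t1}$ with $w_{t,\sigma[t](1)}$; rows outside this range are untouched. So write $e_{rs}(\sigma(w))$ as a product of the transformed factors $e_t^{(\pm)}(\sigma(w))$ and the boundary factor, and examine row by row. For a row $t$ with $t \neq k$, no denominator factor $w_{t\alpha} - w_{t\beta}$ vanishes on $\overline{\mathcal H}$ (regardless of the permutation applied), so that factor is smooth. For the row $t = k$: the denominator of the relevant factor is $\prod_{\ell \neq 1}(\;(\sigma(w))_{k1} - (\sigma(w))_{k\ell}\;)$. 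If $\sigma[k] = (1,i)$, then $(\sigma(w))_{k1} = w_{ki}$ and the product runs over $\ell \neq 1$, so it contains the factor $(\sigma(w))_{k1} - (\sigma(w))_{kj} = w_{ki} - w_{kj}$, which vanishes on $\overline{\mathcal H}$, giving exactly a simple pole (simple because $w_{ki} - w_{kj}$ appears to the first power and no other denominator factor vanishes there). Symmetrically for $\sigma[k] = (1,j)$. If $\sigma[k] \notin \{(1,i),(1,j)\}$, then $\{(\sigma(w))_{k1}\} = \{w_{k\ell_0}\}$ for some $\ell_0 \notin \{i,j\}$ (including $\ell_0 = 1$ when $\sigma[k] = \mathrm{Id}$), and then every denominator factor $(\sigma(w))_{k1} - (\sigma(w))_{k\ell} = w_{\ell_0} - w_{?}$ is a difference of the form "something other than $i,j$" minus "anything", which is nonzero on $\overline{\mathcal H}$; so this factor is smooth. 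Finally, if $k \notin [\min(r,s), \max(r,s)-1]$, then row $k$ is not among the rows appearing in $e_{rs}$ at all, hence no denominator factor involves row $k$ and $e_{rs}(\sigma(w))$ is smooth on $\overline{\mathcal H}$.

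Assembling: $e_{rs}(\sigma(w))$ has a pole on $\overline{\mathcal H}$ iff $\min(r,s) \leq k \leq \max(r,s)-1$ \emph{and} $\sigma \in \Phi_{rs}(i) \cup \Phi_{rs}(j)$, and in that case the pole is simple; in all other cases $e_{rs}(\sigma(w)) \in {\mathcal F}_{ij}$. I expect the main technical point — the only place any care is needed — to be the bookkeeping for which coordinate of row $k$ becomes the "first slot" $(\sigma(w))_{k1}$ after applying $\sigma[k]$, and confirming that when that slot is $w_{ki}$ or $w_{kj}$ the vanishing factor $w_{ki}-w_{kj}$ genuinely occurs in the denominator (i.e.\ that the surviving index $j$ or $i$ is in the range of the product $\prod_{\ell\neq 1}$), together with checking that the numerator never supplies a compensating zero. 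Both of these are immediate from the hypothesis $\bar v + {\mathbb Z}^{\frac{n(n-1)}{2}} \subset \overline{\mathcal H}$ (all non-$(k,i,j)$ differences noninteger, hence nonzero), so there is no real obstacle, only careful indexing.
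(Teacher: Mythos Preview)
Your proposal is correct and follows essentially the same approach as the paper's proof: write out the denominator of $e_{rs}(\sigma(w))$ explicitly as
\[
\prod_{t=\min(r,s)}^{\max(r,s)-1}\ \prod_{\ell\neq 1}^{t}\bigl(w_{t,\sigma[t](1)}-w_{t,\sigma[t](\ell)}\bigr),
\]
observe that the only same-row difference that can vanish on $\overline{\mathcal H}$ is $w_{ki}-w_{kj}$ (which equals $v_{ki}-v_{kj}$ since $z_{ki}=z_{kj}$), and check that this factor appears exactly once and only when $k$ lies in the relevant range and $\sigma[k]\in\{(1,i),(1,j)\}$. The paper's proof is extremely terse---it simply displays the denominator and says ``which implies the lemma''---whereas you have spelled out the row-by-row bookkeeping. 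One small remark: your justification that the numerator cannot cancel the pole invokes ``adjacent-row differences are noninteger for $\bar v$,'' but no such hypothesis is in force here (that condition appears only in Section~6). The correct reason, which your argument also implicitly contains, is simply that numerator factors are differences of entries from \emph{distinct} rows and hence, as linear forms in the $v_{\ell m}$, are not divisible by $v_{ki}-v_{kj}$; so they cannot cancel the simple pole along that hyperplane.
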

\begin{proof}
The case $r=s$ is trivial, since $e_{rr}(\sigma(w))\in\mathcal{F}_{ij}$ for any $\sigma$. Suppose now that $r < s$. Then the denominator of $e_{rs}(\sigma(w))$ is 
$$\prod_{t=r}^{s-1}\left(\prod_{j\neq 1}^{t}(w_{t,\sigma[t](1)}-w_{t,\sigma[t](j)})\right),$$
which implies  the lemma. The case $r>s$ is analogous.
\end{proof}

\begin{definition}
In the case when $\sigma\in\Phi_{\ell m} (i)  \cup \Phi_{\ell m}(j)$ we define:

$$\tau\star\sigma:=
\begin{cases}
\tau\sigma\tau=\sigma\tau\sigma & \text{ if } \ \ \ 1\notin\{i,j\}\\
\tau\sigma=\sigma\tau & \text { if } \ \ \ 1\in\{i,j\}
\end{cases}$$
One easily shows that $\tau\star\sigma$ is well defined and that $\tau\star\sigma\in\Phi_{\ell m} (i)  \cup \Phi_{\ell m}(j)$.
\end{definition}

\begin{lemma} \label{lem-symm} Let $v$ be generic, $z \in {\mathbb Z}^{\frac{n(n-1)}{2}}$, and $\sigma \in \Phi_{\ell m}$, $\ell \neq m$.
\begin{itemize}
\item[(i)] If $\sigma \notin \Phi_{\ell m} (i)  \cup \Phi_{\ell m} (j)$ then we have:

\begin{itemize}
\item[(a)] $ {\rm ev}(\bar{v}) e_{\ell m} (\sigma( v+\tau(z)))  =  {\rm ev}(\bar{v}) e_{\ell m} (\sigma( v+z))$ and ${\mathcal D}^{\bar{v}} e_{\ell m} (\sigma( v+\tau(z)))  =  - {\mathcal D}^{\bar{v}} e_{\ell m} (\sigma( v+z))$. In particular, $ {\mathcal D}^{\bar{v}} e_{\ell m} (\sigma( v+z)) = 0$ if $\tau (z) = z$.

\item[(b)] ${\rm ev}(\bar{v}) ((x-y)e_{\ell m} (\sigma( v+\tau(z)))  =  {\rm ev}(\bar{v})((x-y) e_{\ell m} (\sigma( v+z))) = 0$ and ${\mathcal D}^{\bar{v}} ((x-y) e_{\ell m} (\sigma( v+\tau(z))) ) =   {\mathcal D}^{\bar{v}} ((x-y)e_{\ell m} ( \sigma( v+z)))$. 
\end{itemize}

\item[(ii)]   If $\sigma \in \Phi_{\ell m} (i)  \cup \Phi_{\ell m} (j)$ then $e_{\ell m} (\tau\star\sigma(v + z))= e_{\ell m} (\sigma\tau(v + z))$. In particular.
\begin{itemize}
\item[(a)] If $\tau (z) \neq z$, $ {\rm ev}(\bar{v}) e_{\ell m} (\tau\star \sigma( v+\tau(z)))  =  {\rm ev}(\bar{v}) e_{\ell m} (\sigma( v+z))$ and ${\mathcal D}^{\bar{v}} e_{\ell m} (\tau\star \sigma( v+\tau(z)))  =  - {\mathcal D}^{\bar{v}} e_{\ell m} (\sigma( v+z))$. 

\item[(b)] ${\rm ev}(\bar{v})((x-y) e_{\ell m} (\tau \star\sigma( v+\tau(z))))  =  - {\rm ev}(\bar{v})((x-y) e_{\ell m} (\sigma( v+z)))$ and ${\mathcal D}^{\bar{v}} ((x-y) e_{\ell m} ( \tau\star \sigma( v+\tau(z))))  =   {\mathcal D}^{\bar{v}} ((x-y)e_{\ell m} ( \sigma( v+z)))$. 
\end{itemize}
\end{itemize}
\end{lemma}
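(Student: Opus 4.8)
My plan is to reduce the entire lemma to one combinatorial observation about the coefficient functions $e_{\ell m}$, everything else being bookkeeping with the operators ${\rm ev}(\bar{v})$ and ${\mathcal D}^{\bar{v}}$ as in Lemma~\ref{dif operaator in functions}. The observation I would establish first: inspecting Definition~\ref{definition of coefficients e_rs}, in every factor of $e_{\ell m}(\rho(w))$ that involves row $k$ the single entry $w_{k,\rho^{-1}[k](1)}$ is distinguished, while the remaining entries of row $k$ occur only through symmetric products $\prod_{b\ne\rho^{-1}[k](1)}(\,\cdot\,-w_{kb})$ (in a denominator, or in an upper numerator). Hence $e_{\ell m}(\rho(w))$ depends on $\rho$ only through the components $\rho[t]$, $t\ne k$, and through the index $\rho^{-1}[k](1)$, and is symmetric in the entries $\{w_{kb}:b\ne\rho^{-1}[k](1)\}$. (The case $\ell>m$ is handled identically.) This is precisely the dichotomy recorded in Lemma~\ref{poles of order at most 1}: a simple pole on $\overline{\mathcal H}$ appears exactly when $\rho^{-1}[k](1)\in\{i,j\}$.

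Next I would show that, with $F(v):=e_{\ell m}(\sigma(v+z))$ (so $F^{\tau}(v)=e_{\ell m}(\sigma(\tau(v)+z))$), in case~(i) one has $e_{\ell m}(\sigma(v+\tau(z)))=F^{\tau}(v)$ and in case~(ii) one has $e_{\ell m}(\tau\star\sigma(v+\tau(z)))=F^{\tau}(v)$. For~(i): since $\sigma\notin\Phi_{\ell m}(i)\cup\Phi_{\ell m}(j)$, the index $\sigma^{-1}[k](1)$ lies outside $\{i,j\}$, so $\sigma(v+\tau(z))$ and $\sigma(\tau(v)+z)$ agree in all rows $t\ne k$, have the same distinguished row-$k$ entry, and have the same multiset of remaining row-$k$ entries (because $\tau$ merely permutes the non-distinguished positions $i,j$); the observation then gives the equality. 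For~(ii): a short computation of products of transpositions in slot $k$ — for instance $(i,j)(1,i)(i,j)=(1,j)$ when $1\notin\{i,j\}$, whereas $\tau[k]$ and $\sigma[k]$ already commute when $1\in\{i,j\}$ (which forces $i=1$) — shows that $\tau\star\sigma$ is well defined, lies in $\Phi_{\ell m}(i)\cup\Phi_{\ell m}(j)$, agrees with $\sigma\tau$ off row $k$, and satisfies $(\sigma\tau)^{-1}[k](1)=(\tau\star\sigma)^{-1}[k](1)$; by the observation this yields the main identity $e_{\ell m}(\tau\star\sigma(w))=e_{\ell m}(\sigma\tau(w))$ of part~(ii), and evaluating it at $w=v+\tau(z)$ together with $\sigma\tau(v+\tau(z))=\sigma(\tau(v)+z)$ gives the claimed equality.

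Finally I would push $e_{\ell m}(\cdot)=F^{\tau}$ through the operators. Using $\tau(\bar{v})=\bar{v}$ gives ${\rm ev}(\bar{v})(F^{\tau})={\rm ev}(\bar{v})(F)$; since ${\mathcal D}^{\bar{v}}=\frac{1}{2}(\partial_{x}-\partial_{y})(\bar{v})$ with $x=v_{ki}$, $y=v_{kj}$, and $\tau$ exchanges $x$ and $y$, the chain rule gives ${\mathcal D}^{\bar{v}}(F^{\tau})=-{\mathcal D}^{\bar{v}}(F)$; and $(x-y)F^{\tau}=-((x-y)F)^{\tau}$ with $(x-y)(\bar{v})=0$. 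These three facts, in the spirit of Lemma~\ref{dif operaator in functions}, give all six asserted equalities, the last assertion of (i)(a) following by comparing the two sides when $\tau(z)=z$. The one point to check is that the functions involved are smooth near $\bar{v}$ so that ${\rm ev}(\bar{v})$ and ${\mathcal D}^{\bar{v}}$ apply: in case~(i) this is the observation, and in case~(ii) the only possible simple pole of $F$ lies on $v_{ki}-v_{kj}=z_{kj}-z_{ki}$, which misses $\bar{v}$ exactly when $\tau(z)\ne z$ (the hypothesis of (ii)(a)), while in (ii)(b) the extra factor $x-y$ cancels it. I expect the only delicate part to be the combinatorial observation itself and the clean handling of the degenerate case $1\in\{i,j\}$ (where $(1,1)={\rm Id}$ is read as the transposition $(1,i)$); the remaining manipulations are routine.
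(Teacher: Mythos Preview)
Your proposal is correct and is precisely a detailed unpacking of what the paper records as its proof, namely ``the statements follow by direct verification using Definition~\ref{definition of coefficients e_rs}.'' Your key combinatorial observation---that $e_{\ell m}(\rho(w))$ depends on row $k$ only through the distinguished entry $w_{k,\rho^{-1}[k](1)}$ and is symmetric in the remaining row-$k$ entries---is exactly what makes the direct verification go through, and your reduction of every assertion to the single identity $e_{\ell m}(\cdot)=F^{\tau}$ followed by the bookkeeping rules ${\rm ev}(\bar v)(F^{\tau})={\rm ev}(\bar v)(F)$, ${\mathcal D}^{\bar v}(F^{\tau})=-{\mathcal D}^{\bar v}(F)$, and $(x-y)F^{\tau}=-\bigl((x-y)F\bigr)^{\tau}$ is clean and complete.
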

\begin{proof}
The statements follow by direct verification using Definition \ref{definition of coefficients e_rs}. 
\end{proof}

\subsection{Proof of Proposition \ref{compatible}} \label{proof-compatible}
For part (i) we use 
\begin{eqnarray*}
{\mathcal D}^{\bar{v}} ((x-y) E_{rs} T(v+z)) & = & \sum_{\sigma\in\Phi_{rs}}{\mathcal D}^{\bar{v}} ((x-y)e(\sigma(v+z))){\rm ev} (\overline{v}) T(v+z+\sigma(\varepsilon_{rs}))\\
& + & \sum_{\sigma\in\Phi_{rs}}{\rm ev} (\overline{v}) ((x-y)e(\sigma(v+z))){\mathcal D} T(\bar{v}+z+\sigma(\varepsilon_{rs})).
\end{eqnarray*}

The same formula holds for ${\mathcal D}^{\bar{v}} ((x-y) E_{rs} T(v+\tau(z)))$ after replacing $z$ with $\tau(z)$ on the right hand side.
If $\sigma\notin\Phi_{rs}(i)\cup\Phi_{rs}(j)$ then, $\tau(z+\sigma(\varepsilon_{rs}))=\tau(z)+\sigma(\varepsilon_{rs})$ which implies that ${\mathcal D} T(\bar{v}+z+\sigma(\varepsilon_{rs}))=-{\mathcal D} T(\bar{v}+\tau(z)+\sigma(\varepsilon_{rs}))$ and ${\rm ev} (\overline{v}) T(v+z+\sigma(\varepsilon_{rs}))={\rm ev} (\overline{v}) T(v+\tau(z)+\sigma(\varepsilon_{rs}))$. Thanks to Lemma \ref{lem-symm}(i)(b) the corresponding coefficients in the identity of part (i) are the same. In the case $\sigma\in\Phi_{rs}(i)\cup\Phi_{rs}(j)$ we have $\tau(z+\tau\star\sigma(\varepsilon_{rs}))=\tau(z)+\sigma(\varepsilon_{rs})$, so  ${\mathcal D} T(\bar{v}+\tau(z)+\sigma(\varepsilon_{rs}))=-{\mathcal D}T(\bar{v}+z+\tau\star\sigma(\varepsilon_{rs}))$ and  ${\rm ev} (\overline{v}) T(v+\tau(z)+\sigma(\varepsilon_{rs}))={\rm ev} (\overline{v}) T(v+z+\tau\star\sigma(\varepsilon_{rs}))$ and, now by Lemma \ref{lem-symm}(ii)(b) the coefficients are the same.

The proof of 
 part (ii) is similar.

\subsection{Proof of Proposition \ref{d-t-v-rep}} \label{proof-d-t}

First, we assume that  $g_1 = E_{rs}$ and $g_2 = E_{\ell m}$. The case $r = s$  or $\ell = m$ follows by straightforward computations. For example, if $r=s$, due to the hypothesis $\tau(z)\neq z$, the functions $e_{rr}(v+z)$, $e_{rr}(v+z+\sigma(\varepsilon_{\ell m}))$ and $e_{\ell m}(\sigma(v+z))$ are in $\mathcal{F}_{ij}$. Hence we can apply $\mathcal{D}^{\overline{v}} $ and  $\mbox{ev}(\overline{v})$ to these functions in order to show that the corresponding coefficients in (\ref{representation for derivative tableaux}) coincide. Note that we need to use Lemma \ref{dif operaator in functions}(iii).

Assume now  that $r \neq s$ and $\ell \neq m$. 
Since $\tau (z) \neq z$, we have  $\overline{\Phi}_{rs} \subset  {\Phi}_{rs} (i) \cup  {\Phi}_{rs} (j)$ and $\overline{\Phi}_{\ell m} \subset  {\Phi}_{\ell m} (i) \cup  {\Phi}_{\ell m} (j)$. For convenience we will use the following convention   for $\sigma_1 \in \Phi_{rs}$ and $\sigma_2 \in \Phi_{\ell m}$:
\begin{eqnarray*}
e_{rs } (\sigma_1) = e_{rs} (\sigma_1 (v+z)), &&e_{rs} (\sigma_1, \sigma_2) = e_{rs} (\sigma_1 (v + z + \sigma_2 (\varepsilon_{\ell m}))), \\
e_{\ell m} (\sigma_2) = e_{\ell m} (\sigma_2 (v+z)), &&e_{\ell m} (\sigma_1, \sigma_2) = e_{\ell m} (\sigma_2 (v + z + \sigma_1 (\varepsilon_{r s}))) .
\end{eqnarray*}
We also set $\bar{e}_{\ell m} = (x-y)e_{\ell m}, \bar{e}_{rs } = (x-y)e_{r s}$ and similarly introduce $\bar{e}_{rs} (\sigma_1, \sigma_2) $ and $\bar{e}_{\ell m} (\sigma_1, \sigma_2)$.  Furthermore, we set 
\begin{eqnarray*}
T(\sigma_1 + \sigma_2) = T(v+z+\sigma_1 (\varepsilon_{r s}) + \sigma_2 (\varepsilon_{\ell m})),
\end{eqnarray*}

and
\begin{eqnarray*}
  \mathcal{D} T(\sigma_1 + \sigma_2) = \mathcal{D} T(\bar{v}+z+\sigma_1 (\varepsilon_{r s}) + \sigma_2 (\varepsilon_{\ell m})).
\end{eqnarray*}

Finally, for $\sigma_1 \in \Phi_{rs}$ and $\sigma_2 \in \Phi_{\ell m}$ we set:

\begin{eqnarray*}
L_1(\sigma_1, \sigma_2) & = & 
\begin{cases}
\mathcal{D}^{\overline{v}} ( e_{\ell m} (\sigma_2)   e_{rs} (\sigma_1, \sigma_2 ) T(\sigma_1 + \sigma_2)), \mbox{ if } \sigma_2 \notin \overline{\Phi}_{\ell m}  \\
\mathcal{D}^{\overline{v}} (e_{\ell m} (\sigma_2) )  \mathcal{D}^{\overline{v}}( \bar{e}_{rs} (\sigma_1, \sigma_2 ) T(\sigma_1 + \sigma_2)), \mbox{ if } \sigma_2 \in \overline{\Phi}_{\ell m},
\end{cases}\\
L_2(\sigma_1, \sigma_2) & = & \begin{cases}
\mathcal{D}^{\overline{v}} ( e_{rs} (\sigma_1)   e_{\ell m} (\sigma_1, \sigma_2 ) T(\sigma_1 + \sigma_2)), \mbox{ if } \sigma_1 \notin \overline{\Phi}_{rs }  \\
\mathcal{D}^{\overline{v}} (e_{rs } (\sigma_1) )  \mathcal{D}^{\overline{v}}( \bar{e}_{\ell m} (\sigma_1, \sigma_2 ) T(\sigma_1 + \sigma_2)), \mbox{ if } \sigma_1 \in \overline{\Phi}_{rs},
\end{cases}\\
R_1(\sigma_1, \sigma_2) & = & e_{\ell m} (\sigma_2)   e_{rs} (\sigma_1, \sigma_2 ) T(\sigma_1 + \sigma_2),\\
R_2(\sigma_1, \sigma_2) & = &  e_{rs} (\sigma_1)   e_{\ell m} (\sigma_1, \sigma_2 ) T(\sigma_1 + \sigma_2).
\end{eqnarray*}

Applying Lemma \ref{dij-commute} we obtain
\begin{equation} \label{eq-left}
E_{rs}( E_{\ell m} (\mathcal{D}T(\bar{v} + z))) - E_{\ell m}(E_{rs} (\mathcal{D}T(\bar{v} + z))) = \sum_{\sigma_1,\sigma_2} (L_1(\sigma_1, \sigma_2) - L_2 (\sigma_1, \sigma_2))
\end{equation}
and 
\begin{equation} \label{eq-right}
[E_{rs}, E_{\ell m}] (\mathcal{D}T(\bar{v} + z)) =  \mathcal{D}^{\overline{v}}( [E_{rs}, E_{\ell m}] T(v + z)) =
\end{equation}
$$
=   \mathcal{D}^{\overline{v}}  \left( \sum_{\sigma_1,\sigma_2} (R_1(\sigma_1, \sigma_2) - R_2 (\sigma_1, \sigma_2))\right).
$$
The sums in  (\ref{eq-left}) and  (\ref{eq-right}) run over $(\sigma_1, \sigma_2) \in \Phi_{rs} \times \Phi_{\ell m}$. Our goal is to show that the right hand sides of (\ref{eq-left}) and (\ref{eq-right}) coincide. By the definitions of $L_i$ and $R_i$ we have that $L_1(\sigma_1,\sigma_2) = \mathcal{D}^{\bar{v}} (R_1(\sigma_1,\sigma_2))$ if $\sigma_2 \notin \overline{\Phi}_{\ell m}$ and $L_2(\sigma_1,\sigma_2) =\mathcal{D}^{\bar{v}} (R_2(\sigma_1,\sigma_2))$ if $\sigma_1 \notin \overline{\Phi}_{rs}$. Next we show that $L_1(\sigma_1,\sigma'_2) = \mathcal{D}^{\bar{v}}(R_1(\sigma_1,\sigma'_2))$ if  $\sigma_1 \notin {\Phi}_{rs} (i) \cup {\Phi}_{rs} (j) $ and  $\sigma_2' \in \overline{\Phi}_{\ell m}$. In this case we have   $\tau (\bar{v} + z + \sigma_1 (\varepsilon_{rs}) + \sigma_2' (\varepsilon_{\ell m}))  = \bar{v} + z + \sigma_1 (\varepsilon_{rs}) + \sigma_2' (\varepsilon_{\ell m})$  and, hence, $ \mathcal{D} T(\sigma_1 + \sigma_2') = 0$. Therefore,
\begin{eqnarray*}
L_1(\sigma_1,\sigma_2') & = &  \mathcal{D}^{\overline{v}} (e_{\ell m} (\sigma_2') )  \mathcal{D}^{\overline{v}}( \bar{e}_{rs} (\sigma_1, \sigma_2' ) T(\sigma_1 + \sigma_2'))\\
&  = &  \mathcal{D}^{\overline{v}} (e_{\ell m} (\sigma_2') )  \mathcal{D}^{\overline{v}}( \bar{e}_{rs} (\sigma_1, \sigma_2' )) {\rm ev} (\bar{v})T(\sigma_1 + \sigma_2')\\
&  = &  \mathcal{D}^{\overline{v}} (e_{\ell m} (\sigma_2') )  {\rm ev} (\overline{v})( e_{rs} (\sigma_1, \sigma_2' )) {\rm ev} (\bar{v})T(\sigma_1 + \sigma_2')\\
&  = &  \mathcal{D}^{\overline{v}} (e_{\ell m} (\sigma_2')  e_{rs} (\sigma_1, \sigma_2' )) {\rm ev} (\bar{v})T(\sigma_1 + \sigma_2')\\
&  = &  \mathcal{D}^{\bar{v}}(R_1 (\sigma_1, \sigma_2')).
\end{eqnarray*}
The first equality follows from the definition of $L_1$, while the second follows from $ \mathcal{D} T(\sigma_1 + \sigma_2') = 0$.  The third equality follows from the fact that $\bar{e}_{rs} (\sigma_1, \sigma_2' ) $ is in ${\mathcal F}_{ij}$ by Lemma \ref{poles of order at most 1}. The forth one is a consequence of $ \mathcal{D}^{\overline{v}} e_{rs} (\sigma_1, \sigma_2' ) =0$ (by Lemma \ref{lem-symm}(i)(a)), and finally for the last equality we use $ \mathcal{D}T(\sigma_1 + \sigma_2') = 0$. Similarly one can show that $L_1(\sigma_1',\sigma_2) = \mathcal{D}^{\bar{v}}(R_1(\sigma_1',\sigma_2))$ if  $\sigma'_1 \in \overline{\Phi}_{rs} $ and $\sigma_2  \notin {\Phi}_{\ell m} (i) \cup {\Phi}_{\ell m} (j)$. We next note that if  $\sigma_2' \in \overline{\Phi}_{\ell m}$ and $\sigma_1 \in {\Phi}_{rs} (i) \cup {\Phi}_{rs} (j) $ then either $\sigma_1 \in \overline{\Phi}_{rs}$ or $\tau\star \sigma_1 \in \overline{\Phi}_{rs}$; and if  $\sigma_1' \in \overline{\Phi}_{rs}$ and $\sigma_2 \in {\Phi}_{\ell m} (i) \cup {\Phi}_{\ell m} (j) $ then either $\sigma_2 \in \overline{\Phi}_{\ell m}$ or $\tau\star \sigma_2 \in \overline{\Phi}_{\ell m}$. To complete the proof it is sufficient to prove that for $\sigma_1' \in \overline{\Phi}_{rs}$ and $\sigma_2' \in \overline{\Phi}_{\ell m}$ we have $L =  \mathcal{D}^{\overline{v}} (R)$, where 
$$L  =
 \sum_{(\sigma_1, \sigma_2) \in \Phi_{(\sigma_1',\sigma_2')}}\left(L_1(\sigma_1, \sigma_2)+ L_1(\tau\star \sigma_1, \sigma_2)- L_2(\sigma_1, \sigma_2) - L_2(\sigma_1, \tau \star\sigma_2)\right),$$
$$R = \sum_{(\sigma_1, \sigma_2) \in \Phi_{(\sigma_1',\sigma_2')}}\left(R_1(\sigma_1, \sigma_2)+ R_1(\tau\star \sigma_1, \sigma_2)- R_2(\sigma_1, \sigma_2) - R_2(\sigma_1, \tau \star\sigma_2)\right).$$

We then note that $\sum_{\Phi_{(\sigma_1',\sigma_2')}}\left(R_1(\sigma_1, \sigma_2) - R_2(\sigma_1, \sigma_2)\right)=0$ thanks to Lemma \ref{lm-rs-ident}. 

We can simplify the expansion of $L$ using various identities. We use first that $z + \tau \star\sigma_1( \varepsilon_{rs}) + \sigma_2( \varepsilon_{\ell m})= z + \sigma_1( \varepsilon_{rs}) + \tau \star\sigma_2( \varepsilon_{\ell m})$, which implies $T(\tau\star \sigma_1+\sigma_2) = T(\sigma_1+\tau \star\sigma_2)$ and, in particular, $ {\rm ev}(\bar{v})T(\tau \star\sigma_1+\sigma_2) =  {\rm ev}(\bar{v})T(\sigma_1+\tau \star\sigma_2)$ and $  \mathcal{D} T(\tau \star\sigma_1+\sigma_2) =   \mathcal{D} T(\sigma_1+\tau\star \sigma_2)$. We also use  that  $ \mathcal{D}T(\sigma_1+\sigma_2)  = - \mathcal{D} T(\tau\star \sigma_1+\sigma_2)$ and $ {\rm ev}(\bar{v})T(\sigma_1+\sigma_2)  =  {\rm ev}(\bar{v})T(\tau \star\sigma_1+\sigma_2)$. On the other hand, by Lemma \ref{lem-symm}(ii)(b) we have that $ \mathcal{D}^{\overline{v}} (\bar{e}_{rs} (\sigma_1, \sigma_2)) = \mathcal{D}^{\overline{v}} (\bar{e}_{rs} (\tau\star \sigma_1, \sigma_2)) $, $ \mathcal{D}^{\overline{v}} (\bar{e}_{\ell m} (\sigma_1, \sigma_2)) = \mathcal{D}^{\overline{v}} (\bar{e}_{\ell m } (\sigma_1, \tau\star \sigma_2)) $, $  {\rm ev}(\bar{v}) (\bar{e}_{rs} (\sigma_1, \sigma_2)) =  - {\rm ev}(\bar{v}) (\bar{e}_{rs} (\tau\star \sigma_1, \sigma_2))$, and $  {\rm ev}(\bar{v}) (\bar{e}_{\ell m} (\sigma_1, \sigma_2)) =  - {\rm ev}(\bar{v}) (\bar{e}_{\ell m } (\sigma_1, \tau\star \sigma_2)) $. All these identities reduce  $L$ to twice the sum of 
\begin{eqnarray*} 
 \mathcal{D}^{\overline{v}} (e_{\ell m} (\sigma_2))  \mathcal{D}^{\overline{v}} (\bar{e}_{rs} (\tau\star \sigma_1, \sigma_2)) - \mathcal{D}^{\overline{v}} (e_{rs} (\sigma_1)) \mathcal{D}^{\overline{v}} (\bar{e}_{\ell m} (\sigma_1, \tau\star \sigma_2))  {\rm ev}(\bar{v})T(\sigma_1+\sigma_2) \\
\mathcal{D}^{\overline{v}} (e_{\ell m} (\sigma_2))   {\rm ev}(\bar{v})(\bar{e}_{rs} (\tau \star\sigma_1, \sigma_2)) - \mathcal{D}^{\overline{v}} (e_{rs} (\sigma_1))  {\rm ev}(\bar{v}) (\bar{e}_{\ell m} (\sigma_1, \tau\star \sigma_2))  \mathcal{D} T(\sigma_1+\sigma_2).
\end{eqnarray*}
over all $(\sigma_1, \sigma_2) \in \Phi_{(\sigma_1',\sigma_2')}$.

Let $\{(\sigma_1^{(p)},\sigma_{2}^{(p)})\}_{p=1}^{t}$ be the set of all distinct pairs of permutations in $\Phi_{(\sigma_1,\sigma_{2})}$.
For each $p=1,\ldots,t$  define the following functions:
\begin{eqnarray*}
 f_{2p} =  e_{\ell m}(\sigma_{2}^{(p)}), && \ \ 
f_{2p-1} =e_{rs}(\sigma_{1}^{(p)}),\\
g_{2p-1} = e_{rs}(\tau\sigma_{1}^{(p)},\sigma_{2}^{(p)}),& &\ \ 
g_{2p} =-e_{\ell m}(\sigma_{1}^{(p)},\tau\sigma_{2}^{(p)}).
\end{eqnarray*}

Note that $e_{rs} (\tau\sigma_1^{(p)}, \sigma_2^{(p)}) = e_{rs} (\sigma_1^{(p)}, \sigma_2^{(p)})^{\tau}$ and $e_{\ell m} (\sigma_1^{(p)}, \tau \sigma_2^{(p)}) = e_{\ell m} (\sigma_1^{(p)}, \sigma_2^{(p)})^{\tau}$ thanks to Lemma \ref{lem-symm}(ii). We finally apply Lemma \ref{dv-formulas} to the set of functions $f_{p},g_{p}$, $p=1,\ldots, 2t$, and obtain $L =  \mathcal{D}^{\overline{v}} (R)$. Note that the  hypothesis  $\sum_{p=1}^{2t}f_{p}g_{p}^{\tau}=0$  of Lemma \ref{dv-formulas}   holds by Lemma \ref{lm-rs-ident}.

\end{document}